\newcommand{\be}{\begin{equation} }
	\newcommand{\ee}{\end{equation}}
\newcommand{\bee}{\begin{equation*} }
	\newcommand{\eee}{\end{equation*}}
\newcommand{\bse}{\begin{subequations}}
	\newcommand{\ese}{\end{subequations}}
\newtheorem{theorem}{Theorem}[section]
\newtheorem{corollary}[theorem]{Corollary}
\newtheorem{lemma}{Lemma}[section]
\newtheorem{Proposition}{Proposition}[section]
\newtheorem{RHP}{The RH problem}
\theoremstyle{remark}
\newtheorem{remark}{Remark}[section]
\theoremstyle{definition}
\numberwithin{equation}{section}
\title[Global  solution for the  mCH equation]
{On the global existence for  the  modified Camassa-Holm equation via the inverse scattering  method}
\author[]{Yiling YANG}
\address[Yiling Yang]{School of Mathematical Sciences  and Key Laboratory   for Nonlinear Science, Fudan   University, Shanghai 200433, P. R. China.}\email{ylyang19@fudan.edu.cn}
\author[]{Engui Fan}
\address[Engui Fan]{School of Mathematical Sciences  and Key Laboratory   for Nonlinear Science, Fudan   University, Shanghai 200433, P. R. China.}\email{faneg@fudan.edu.cn}
\author[]{Yue Liu}
\address[Yue Liu]{Department of Mathematics, University of Texas at Arlington, TX 76019}\email{yliu@uta.edu}
\date{}                                           % Activate to display a given date or no date
\begin{document}
	%\today
	\thispagestyle{empty}

	\begin{abstract}
		\baselineskip=17pt		
	 In this paper, we address the existence of global solutions to the Cauchy problem of the modified Camassa-Holm (mCH) equation, which is known as a model for the unidirectional propagation of shallow water waves.
 Based on the  spectral analysis of the Lax pair,  we apply the inverse scattering transform to rigorously analyze the mCH equation with zero background. By connecting the Cauchy problem to the  Riemann-Hilbert (RH) problem, we establish a bijective map between potential and reflection coefficients within the $L^2$-Sobolev space framework. Utilizing a reconstruction formula and estimates on the time-dependent RH problem, we obtain a unique global solution to the Cauchy problem for the mCH equation.
	\end{abstract}
	
	\maketitle
		\noindent {\bf Keywords: } modified Camassa-Holm   equation;  inverse scattering  transform;        Cauchy projection operator; global solution.
	
	\noindent {\bf AMS Subject Classification 2020:} 35Q51; 35Q15; 37K15; 35Q35.
	\baselineskip=17pt

	\setcounter{tocdepth}{2} \tableofcontents

	\section {Introduction and main result}
	\quad
	In this study, we  investigate  the existence of global solutions to the following Cauchy problem associated with the modified Camassa-Holm (mCH) equation
\begin{align}
	&m_{t}+\left(m\left(u^{2}-u_{x}^{2}\right)\right)_{x}+\kappa u_{x}=0, \quad m=u-u_{x x},\ t>0,\ x\in\mathbb{R},\label{mch}\\
	&m(x, 0)=m_{0}(x), \label{mch1}
\end{align}	
where $ u(t, x)$ is the function in dimensionless space-time variables $(t, x)$,
% with zero background (i.e.  $ m_0(x) \to 0, $ as the spatial variable $ |x| \to + \infty $),
and  $\kappa$ is a positive constant  characterizing  the effect of the linear dispersion.
The mCH equation (\ref{mch}) was first  presented   by Fokas \cite{Fokas} and Fuchssteiner  using  recursion operators \cite{BF1996}, and later  found  by Olver and
Rosenau \cite{PP1996} via   tri-Hamiltonian duality to the bi-Hamiltonian of the mKdV
equation (see also \cite{THFan},  referred to as the Fokas-Olver-Rosenau-Qiao equation). %In \cite{Qiao}, the equation was again derived from the two-dimensional Euler system,   where peakon/cuspon solutions were presented.
%When $\kappa=0$, the equation \eqref{mch} reduces to
%\begin{align}	
%	&m_{t}+\left(m\left(u^{2}-u_{x}^{2}\right)\right)_{x} =0,  \label{mcho}
%\end{align}	
%which   was  derived  by  Fuchssteiner via using  recursion operators \cite{BF1996}.
%The mCH equation \eqref{mcho}
%is also   referred to as the Fokas-Olver-Rosenau-Qiao equation  \cite{THFan},
%but is mostly known as the  mCH  equation.
Recently  the mCH equation \eqref{mch}  was considered as a model for the unidirectional propagation for
shallow-water waves of mild amplitude over a flat bottom \cite{CHL}, where the solution $ u $ is related to  the horizontal velocity in certain level of water, and $\kappa>0$  is a parameter related to the
critical shallow water speed.  In the short-wave limit case,
 the mCH equation \eqref{mch} reduces to the short-pulse equation \cite{qu7}
 \begin{align*}
 	v_{xt}=\frac{1}{3}(v^3)_{xx}+\kappa v,
 \end{align*}
which is a model for the propagation of ultra-short light pulses in silica optical fibers \cite{Sch2004} and it is also an approximation of nonlinear wave packets in dispersive media in the limit of few cycles on the ultra-short pulse scale \cite{Sch2005}. Moreover, the  short-pulse equation and the nonlinear Schr\"odinger equation are both  derived from Maxwell's equations, but the numerical simulations in \cite{Sch2005} shows that  as the pulse length shortens, the  nonlinear Schr\"odinger equation  approximation becomes steadily less accurate, while the short-pulse equation provides a better and better approximation.

It is noted that the    mCH equation in \eqref{mch} for any $ \kappa \geq 0$ is  completely  integrable  and admits  the Lax pairs \cite{Qiao2,Sch96}.
%It is noted that  the    mCH equations in \eqref{mch} for any $\kappa \geq 0$ is  completely  integrable  and admit  the Lax pairs \cite{Qiao2,Sch96}.
However, unlike the Camassa-Holm (CH) equation \cite{Holm1,cola,Fuchssteiner1} and the Degasperis-Procesi (DP) equation \cite{DP}, the mCH equation \eqref{mch}  with a positive parameter $\kappa$ cannot be transformed into the mCH equation \eqref{mch} with $\kappa=0$ using the Galilean transformation.

%  The mCH equation  \eqref{mch} admits   nonsmooth soliton solutions       were obtained   by the method of inverse scattering  transformation method \cite{Qiao2}.
%Wave-breaking, peakons and local well-posedness  to the Cauchy problem  for the mCH equation in the Sobolev
%space $H^{s}$, $s>5/2$  were  given  \cite{qu7}.  %They
%deactivated that   the mCH equation can be regarded as a Euclidean-invariant version of the CH equation, just as the mKdV equation is a Euclidean-invariant counterpart to the KdV equation from the viewpoint of curve flows in Klein geometries.
The local well-posedness  to the Cauchy problem  associated with  the mCH equation \eqref {mch} for the initial profile $ m_0 \in H^{s}$, $s>1/2$ was  established  in \cite{qu7}. A blow-up mechanism to the mCH equation  \eqref{mch} is provided in \cite{qu8}. However, unlike the CH equation case in \cite{CA2000}, even if the initial potential $m_0$ does not change sign, the second derivative item $u_{xx}$ in the equation  \eqref{mch} may also blow-up in finite time. On the other hand, the mCH equation  \eqref{mch} admits   the smooth soliton solutions \cite{Matsuno2} as the wave speed $ c \ge 2\kappa > 0  $ and   the orbital  stability of those smooth solitons
 in the Sobolev spaces $H^1(\mathbb{R})\cap W^{1,4}(\mathbb{R})$ and the spectral stability of such smooth
solitons are obtained  without
the condition of positive Randon measure  \cite{LL2021}.

It is worth mentioning that the behavior of  solitons with $ \kappa = 0 $ is distinct from that of the smooth solitons ($\kappa > 0 $) of the mCH equation (\ref{mch}). This leads to new kinds of singular solitons known as peakons \cite{qu8}. In this case, $\kappa = 0, $ it was  shown that  the    mCH equation  \eqref{mch} with  a nonzero background (i.e. the initial value  $ m_0(x) \to \gamma \neq 0, $ as $ |x| \to \infty$)
may  support   smooth soliton solutions by the  B\"acklund transformation \cite{Matsuno1} and the Riemann-Hilbert(RH) method \cite{Mon}.
Recently,     the     long time   asymptotic behavior of the mCH equation  \eqref{mch}  with $\kappa > 0 $    was obtained by using $\bar\partial$-steepest decadent  analysis \cite{YYLmch}.
The existence of global solutions to the Cauchy problem for the
mCH equation \eqref{mch} with $ \kappa = 0 $ and a nonzero background   was studied  \cite{YFL}.

%In the present paper, we study the   bijectivity for the scattering-inverse scattering transforms associated with the  mCH equation (\ref{mch}) and global well-posedness  for the initial value problem of  the mCH  equation (\ref{mch}).

The primary objective of this paper is to utilize the $L^2$-Sobolev space bijectivity between potential and reflection coefficient to establish the global well-posedness of the Cauchy problem for the mCH equation (\ref{mch}). The main result is stated as follows (the proof framework is described in Figure \ref{Figure1}).

%The aim of present paper is to establish  the  $L^2$-Sobolev space  bijectivity  between potential and reflection coefficient
%and then further  prove  the  global well-posedness  for the initial value problem of  the mCH  equation (\ref{mch}).
%The  main result  is stated  as follows (its proof frame   see Figure \ref{Figure1}).
\begin{theorem}\label{last}  Assume that  $m_0 \in   H^{2,1}(\mathbb{R})$ is sufficiently small. Then	there exists a unique global solution   $m \in C([0, +\infty),  H^{2,1}(\mathbb{R}))$   to  the Cauchy  problem  (\ref{mch})-(\ref{mch1}) with the initial value $ u(0, x) = m_0(x),  \; \forall x \in \mathbb{R}. $   Furthermore, the map
	$$m_0  \in  H^{2,1}(\mathbb{R})\longrightarrow m \in C([0, +\infty),  H^{2,1}(\mathbb{R}))$$
	is locally Lipschitz continuous.
\end{theorem}
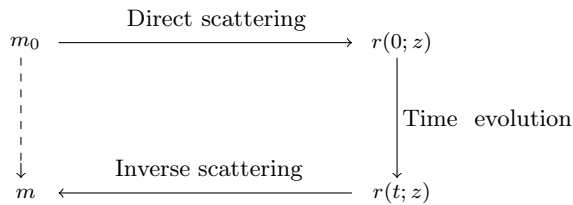
\begin{figure}
	\begin{center}
		\begin{tikzpicture}
				\node at (-3.0, 1.0 ){\fontsize{8pt}{8pt} $m_0$};
					\draw [->] (-2.5,1.0)--(1.4,1.0);
					\node at (2, 1.0 ){\fontsize{8pt}{8pt} $r (0;z) $};		
				\node at (-3.0, -1.0 ){\fontsize{8pt}{8pt} $m  $};
					\draw [<-] (-2.5,-1.0)--(1.4,-1.0);
					\node at (2, -1.0 ){\fontsize{8pt}{8pt} $r (t;z)$};
			
						\draw [-] (2,0.8)--(2,-0.6);
					\draw [->] (2,-0.6)--(2,-0.8);
						\draw [dashed] (-3,0.8)--(-3,-0.6);
						\draw [->] (-3,-0.6)--(-3,-0.8);
						\node at (3.2, 0 ){\footnotesize Time \ evolution};
						\node at (-0.4, 1.3 ){\footnotesize Direct scattering};
						\node at (-0.5, -0.7){\footnotesize Inverse scattering};
					\end{tikzpicture}
				\caption{\footnotesize   Bijective relation between potential $m$ and reflection coefficients }
				\label{Figure1}
			\end{center}
		\end{figure}

The key tool used to prove the result above is the inverse scattering theory, initially developed for the first-order spectral problem, and later applied to demonstrate global existence in integrable systems within the Schwartz space \cite{BC1984, BC1985}. Moreover, Zhou provided a rigorous framework to solve the Cauchy problem in weighted $L^2$-Sobolev spaces through solvability analysis of the RH problem in \cite{Zhou1989, ZhouL2}.
Recently, this approach has found extensive application in proving the global well-posedness of integrable equations, including the nonlinear Schr\"odinger (NLS), derivative NLS, and mCH equations with a nonzero background  \cite{Deift2011, RN10, LiuDNLS, YFL, pelinovskey1}.

%The key tool to prove our above result is  the inverse scattering theory, which
%was first developed   for the first-order   spectral problem and
%was   used to prove   global  existence    for integrable  systems in the Schwartz space \cite{BC1984,BC1985}. Further, via solvability analysis  of the RH problem in \cite{Zhou1989}, Zhou provided a rigorous 	%framework to solve the Cauchy problem in weighted $L^2 $-Sobolev spaces \cite{ZhouL2}.
%	In recent years,  this method has  been  widely applied to prove  the  global well-posedness of integrable equations, such as nonlinear Schr\"odinger (NLS),  derivative NLS, mCH equations 	 %\cite{Deift2011,RN10,LiuDNLS,YFL,pelinovskey1}.

We would like to point out that the assumption $m_0 \in   H^{2,1}(\mathbb{R})$ made  in Theorem
\ref{last} is excluded from the blow-up condition in \cite{qu8}. Furthermore,  compared with  our previous  results in \cite{YFL}, extending the inverse scattering transform approach to the mCH equation (\ref{mch}) presents certain distinctions and difficulties. The most significant distinction arises from the fact that the RH problem corresponding to the mCH equation (\ref{mch}) lacks an explicit symmetry expression between $z$ and $-1/z$, leading to two substantial difficulties:

\begin{itemize}
	\item[(1)] In contrast to the derivative NLS equation \cite{liu, pelinovskey1}, the transformation of the Jost function from the spectral parameter $z$ to $k=z-\frac{1}{z}$ poses challenges in constructing the RH problem. As a result, direct application of the Fourier transformation for estimating the RH problem is not feasible. However, this obstacle is successfully addressed by leveraging the special structure of the phase function $\theta(z)$ defined in \eqref{theta} and exploiting the symmetry of the reflection coefficient between $z$ and $-1/z$ (see Lemma \ref{lemmaFourier} in Section \ref{sec62}).
	
	\item[(2)] The eigenvalues and resonances associated with the Lax pair \eqref{lax0} may be located anywhere in $\mathbb{C}^+\cup \mathbb{R}$ and can be non-simple, leading to challenges in controlling the norm of the solution to the RH problem. Notably, the eigenvalues on $\mathbb{C}^+$ only affect the estimation of the solution from the RH problem in Section \ref{sec3} but do not impact its unique solvability.
	On the other hand, the presence of the spectrum on $\mathbb{R}$, known as resonance, introduces difficulties in estimating the RH problem. Research on resonance necessitates alternative methods for estimation, as exemplified in \cite{Zhou1989CPAM, Jenkins}. To overcome these challenges, a small norm condition on the initial value $m_0$ becomes necessary to ensure the absence of eigenvalues and resonance (see Subsection \ref{subec r}).

\end{itemize}

The rest of the paper is structured as follows: In Section \ref{sec2}, we delve into the direct scattering transform, which maps the initial data $m_0(x)$ to reflection coefficient. In Section \ref{sec3}, we establish two RH problems associated with the mCH equation \eqref{mch}. We rigorously demonstrate their solvability in the space $H^{s}(\mathbb{R}_k)$, where $s>1/2$, and obtain estimates on their solutions. In Section \ref{sec6}, we analyze the time-evolution of reflection coefficient and the RH problem to present the proof of our main result, Theorem \ref{last}.

\vskip 0.2cm

\noindent {\bf Notations.}  We now introduce some notations used this paper.
The classical Pauli matrices $\{\sigma_j\}_{j=1,2,3}$ are defined by
\begin{equation}\label{def:PauliM}
	\sigma_1:=\begin{pmatrix}0 & 1 \\ 1 & 0\end{pmatrix}, \quad
	\sigma_2:=\begin{pmatrix}0 & -i \\ i & 0\end{pmatrix}, \quad
	\sigma_3:=\begin{pmatrix}1 & 0 \\ 0 & -1\end{pmatrix}.
\end{equation}
	If $I$ is an interval on the real line $\mathbb{R}$ and $\mathrm{X}$ is a  Banach space, then $C_b(I,\mathrm{X})$ denotes the space of  bounded continuous functions on $I$ taking values in $\mathrm{X}$. It is equipped with the norm
	\begin{equation*}
		\|f\|_{C_b(I, \mathrm{X})}=\sup _{x \in I}\|f(x)\|_{\mathrm{X}}.
	\end{equation*}
	%Moreover, denote $C^0_B(\mathrm{X})$ as a  space of bounded continuous functions on $\mathrm{X}$.
	
	%If the  entries  $f_1$ and $f_2$  are in space $\mathrm{X}$,  then we call vector  $\vec{f}=(f_1,f_2)^T$  is in space $\mathrm{X}$ with  the norm $\parallel \vec{f}\parallel_\mathrm{X}\triangleq \parallel f_1\parallel_\mathrm{X}+\parallel f_2\parallel_\mathrm{X}$. Similarly, if every  entries of  matrix $A$ are in space $\mathrm{X}$, then we call $A$ is also in space $\mathrm{X}$.
	
	We introduce  the normed spaces:
	\begin{itemize}
		\item A weighted $L^p(\mathbb{R})$ space is specified by
		$$L^{p,s}(\mathbb{R})  =  \left\lbrace f(x)\in L^p(\mathbb{R}) | \hspace{0.1cm} \langle \cdot\rangle^sf(x)\in L^p(\mathbb{R}) \right\rbrace; $$
		
		\item A weighted Sobolev space  is defined by
		$$H^{l,s}(\mathbb{R})   =  \left\lbrace f(x)\in L^2(\mathbb{R}) | \hspace{0.1cm} \langle \cdot\rangle^s\partial^jf(x)\in L^2(\mathbb{R}),  \text{ for }j=1,...,l \right\rbrace.$$
		
	\end{itemize}
	For the simplicity,  the norm of $f(x)\in L^{p}(\mathbb{R})$ and $g(x)\in L^{p,s}(\mathbb{R})$ are  abbreviated to $\parallel f\parallel_{p}$,  $\parallel g\parallel_{p,s}$ respectively.
	If a fucntion $f(z)$, $z\in\mathbb{R}$ admits the symmetry $f(z)=f(-1/z)$, then for $k=z-1/z$,
	\begin{align*}
		g(k)=f(z(k))
	\end{align*}
	is a well defined function on $\mathbb{R}$. We call $f\in H^{l,s}(\mathbb{R}_k) $ if $g\in H^{l,s}(\mathbb{R})$.
	
	Finally,  the letter $C$ will be used to denote universal positive constants which may vary from line to line. %We also use the	notation $A\lesssim B$ to denote the bound of the form $A \leq CB$.
	 To emphasize the implied constant
	to depend  on some parameter $\alpha$, we shall indicate this by $C(\alpha)$.
	
	\section {Direct scattering transform}\label{sec2}
	
	In this section, we provide the framework  on  the spectral analysis of  the Lax pair to establish the direct scattering transform and address the RH problem
	with initial data  $m_0\in H^{2,1}(\mathbb{R})$.

	\subsection{Spectral analysis on the Lax pair}
	
	For convenience,    we fix $\kappa=2$  in the mCH equation \eqref{mch}  without  loss of generality,   since under a  simple  transformation
	\begin{align}
		x=\tilde{x}, \ \
		t=\frac{2}{\kappa} \tilde{t},  \ \
		u(x, t)=\sqrt{\frac{\kappa}{2}} \tilde{u}(\tilde{x}, \tilde{t}),
	\end{align}
	the  equation (\ref{mch}) becomes
	\begin{equation}
		\tilde{m}_{\tilde{t}}+\left(\tilde{m}\left(\tilde{u}^{2}-\tilde{u}_{\tilde{x}}^{2}\right)\right)_{\tilde{x}}+2 \tilde{u}_{\tilde{x}}=0, \quad \tilde{m}=\tilde{u}-\tilde{u}_{x x}.
	\end{equation}
	
	Then the mCH equation (\ref{mch})   with $\kappa=2$     admits the Lax pair \cite{Qiao2,Sch96}
	\begin{equation}
		\Phi_x = X \Phi,\hspace{0.5cm}\Phi_t =T \Phi, \label{lax0}
	\end{equation}
	where
	\begin{equation}
		X=-\frac{ik}{4}\sigma_3+\frac{i\lambda m(x,t)}{2}\sigma_2,\nonumber
	\end{equation}
	\begin{equation}
		T=\frac{ik}{2\lambda^2}\sigma_3+\frac{ik}{4} \left(u^{2}-u_{x}^{2}\right)\sigma_3-i\left(\frac{2iu-k u_{x}}{2\lambda}+\frac{\lambda}{2} \left(u^{2}-u_{x}^{2}\right) m \right) \sigma_2
		\nonumber
	\end{equation}
	with
	$$k=k(z)=z-\frac{1}{z},\hspace{0.3cm}   \lambda= \lambda(z)=\frac{1}{2}(z+\frac{1}{z}),$$
	and  $z\in \mathbb{C}$ is  a spectral parameter.
	
	Before making  the  direct scattering transform,
	we first write  equation (\ref{mch})  and  its Lax pair into  the  forms  in   a new space variable $y$.
	From the conservation law of the equation  \eqref{mch},
	\begin{align*}
		q_t+(q(u^2-u_x^2))_x=0,
	\end{align*}
	we  introduce  the coordinate transform
	\begin{align}
		dy=qdx-q(u^2-u_x^2)dt,
	\end{align}
	and obtain a reciprocal    transformation
	\begin{equation}
		y(x,t)=x-\int_{x}^{+\infty} \left(q(s,t)-1\right) ds,\label{transy}
	\end{equation}
	where
	\begin{align}
		q(x,t)=\sqrt{m(x,t)^2+1}.
	\end{align}
	Then under  variables $(y,t)$,
	 the mCH equation \eqref{mch} reads
	\begin{align}
			q_t+2u_y	m=0,\label{mchy}
	\end{align}
	and the Lax pair \eqref{lax0} becomes
	\begin{align}
		&\Phi_y = Y \Phi,\hspace{0.5cm} \Phi_t = Q \Phi\label{lax y}\\
		&	Y=\frac{1}{\sqrt{m^2+1}}X,\hspace{0.5cm}Q=(u^2-u_x^2)X+T.\label{laxy}
	\end{align}
	
	In the direct scattering transform, we first consider   the partial spectral problem in the Lax pair  \eqref{lax0} under $t=0$, which  is omitted as usual. For example, $\Phi(z;0,y)$ is just written as $\Phi(z;y)$ or $\Phi$ for simplicity. In addition,	under $m_0\in H^{2,1}$,    the map $x\to y(x,0)$ is continuous differentiable bijection.

	Define a transformation
	\begin{equation}
		\mu(z)\triangleq\mu(z;y)=F(y)^{-1}\Phi(z;y) e^{\frac{i}{4}(z-\frac{1}{z})y\sigma_3}\label{transmu},
	\end{equation}
	where
	\begin{align}
		F(y)=\sqrt{\frac{q_0+1}{2 q_0}}\left(\begin{array}{cc}
			1 & \frac{-i m_0}{q_0+1} \\
			\frac{-i m_0}{q_0+1} & 1
		\end{array}\right)\label{F}.
	\end{align}
	It is thereby inferred that
	\begin{align}
		&\mu_y = -\frac{i}{4}(z-\frac{1}{z})[\sigma_3,\mu]+P\mu,\label{lax1.1}
	\end{align}
	where  the Lie bracket $ [A, B] $ is defined by $ [A, B] = AB -BA  $ and
	\begin{align*}
		&P=\frac{i m_{0,x}}{2q_0^3}\sigma_1+  \frac{m_0}{2 z q_0^2}\left(\begin{array}{cc}
			-i m_0 & 1 \\
			-1 & i m_0
		\end{array}\right).
	\end{align*}
	The equation \eqref{lax1.1} leads to two  Volterra type integrals
	\begin{equation}
		\mu_\pm(z;y)=I+\int^{y}_{\pm \infty}e^{-\frac{i}{4}(z-\frac{1}{z})(y-s)\hat{\sigma}_3}P(s,z)\mu_\pm(z;s)ds\label{intmu}.
	\end{equation}
	Then $ \mu_\pm (z)$ admit two kinds of  reduction conditions
	\begin{equation}
		\mu_\pm(z)=\sigma_2\overline{\mu_\pm(\bar{z})}\sigma_2=\sigma_1\mu_\pm(-z)\sigma_1\label{symPhi1}
	\end{equation}
	and
	\begin{equation}
		\mu_\pm(z)=F^{-2}\sigma_2\mu_\pm(-z^{-1})\sigma_2.\label{symPhi2}
	\end{equation}
	Denote the matrix in column
	$$\mu_\pm(z;y)=\left(  \mu_{\pm,1}(z;y),  \mu_{\pm,2}(z;y)\right), $$
	where  the subscript $1$ and $ 2$ indicate
	the first and second columns of $\mu_\pm(z;y)$, respectively. The integral property of $\mu_\pm(z;y)$ is given in the following proposition.
	\begin{Proposition}\label{Propmu}
		Let $m_0\in H^{2,1}(\mathbb{R})$. Then $\mu_\pm(z;y)$  uniquely exist   such that $\mu_\pm(z;y)-I\in L^{\infty}(\mathbb{R}^\pm\times\mathbb{R})\cap C_b(\mathbb{R}^\pm, L^2(\mathbb{R}))\cap L^{2}(\mathbb{R}^\pm\times\mathbb{R})$, $\partial_z\mu_\pm(z;y),\ k(z)\partial_z\mu_\pm(z;y)\in C_b(\mathbb{R}^\pm, L^2(\mathbb{R}))$. Furthermore, the non-diagonal terms of $k(z)(\mu_\pm(z;y)-I)$ belong in $ C_b(\mathbb{R}^\pm, L^{2}(\mathbb{R}))$.
	\end{Proposition}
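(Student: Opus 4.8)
The plan is to prove Proposition~\ref{Propmu} by the standard Volterra iteration / Neumann series argument applied to the integral equations \eqref{intmu}, carefully tracking the $z$-dependence so that all the claimed $L^\infty$, $C^0$ in $y$ and $L^2$ in $z$ statements come out. First I would fix the sign, say the ``$-$'' equation $\mu_-(y,z)=I+\int_{-\infty}^{y}e^{-\frac{i}{4}(z-\frac1z)(y-s)\hat\sigma_3}P(s,z)\mu_-(s)ds$, and observe that the exponential factor $e^{-\frac{i}{4}(z-\frac1z)(y-s)\hat\sigma_3}$ has unit modulus for $z\in\mathbb{R}$ (since $k(z)=z-1/z$ is real there), so it contributes nothing to the size estimates; the only analytic input needed about $P$ is that, by the explicit formula for $P$ together with $m_0\in H^{2,1}$, the map $y\mapsto \|P(y,\cdot)\|$ is in $L^1(\mathbb{R})\cap L^2(\mathbb{R})$ in $y$ with values in the appropriate $L^2_z$ space — here one uses that $\frac{m_{0,x}}{q^3}$ and $\frac{m_0}{zq^2}$ are controlled in $L^1_y\cap L^2_y$ uniformly in $z\in\mathbb{R}$ because $1/q\le 1$ and $|1/z|$ is multiplied against $m_0/q^2$ which is already integrable, while $1/z$ near $z=0$ is handled by noting $k(z)=z-1/z$ and absorbing it (this is exactly why the off-diagonal/$k(z)$-weighted statements appear). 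Then the Neumann series $\mu_-=\sum_{n\ge0}\mu_-^{(n)}$ with $\mu_-^{(0)}=I$ and $\mu_-^{(n+1)}(y,z)=\int_{-\infty}^y e^{-\frac{i}{4}k(z)(y-s)\hat\sigma_3}P(s,z)\mu_-^{(n)}(s,z)\,ds$ converges absolutely and uniformly: the $n$-th term is bounded by $\frac{1}{n!}\left(\int_{-\infty}^y\|P(s,\cdot)\|\,ds\right)^n$, giving $\mu_-(y,z)-I\in L^\infty(\mathbb{R}^-\times\mathbb{R})$ with norm $\le e^{\|P\|_{L^1_y}}-1$. Uniqueness is immediate from the same contraction estimate on $(-\infty,y_0]$ for $y_0$ large negative and then iteration forward.

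Next I would upgrade to the continuity-in-$y$ and $L^2_z$ statements. For $\mu_-(y,\cdot)-I\in L^2(\mathbb{R})$ in $z$ for each fixed $y$, and continuity of $y\mapsto\mu_-(y,\cdot)-I$ into $L^2_z$, the key is that the first iterate $\mu_-^{(1)}(y,z)=\int_{-\infty}^y e^{-\frac i4 k(z)(y-s)\hat\sigma_3}P(s,z)\,ds$ is in $C^0(\mathbb{R}^-,L^2_z)$ — this is where one must be a little careful, since $P(s,z)$ itself contains the factor $1/z$ which is not $L^2_z$ near $0$; but the off-diagonal block carrying $1/z$ pairs with the oscillatory factor $e^{\mp\frac i2 k(z)(y-s)}$, and since $k(z)=z-1/z$ blows up at $z=0$ the integration by parts / non-stationary phase in $s$ gains a factor of $k(z)^{-1}$, which tames the $1/z$ singularity; the diagonal block has no $1/z$ and is directly $L^2_z$ by Minkowski's integral inequality using $\|P\|_{L^1_yL^2_z}<\infty$. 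This same mechanism explains the assertion that the \emph{non-diagonal} terms of $k(z)(\mu_\pm-I)$ lie in $C^0(\mathbb{R}^\pm,L^2_z)$: multiplying by $k(z)$ exactly cancels the gain, leaving something still in $L^2_z$ because the off-diagonal part of $P$ (without the oscillation-gain) is itself $L^1_yL^2_z$. Higher iterates are then handled by the bound $\|\mu_-^{(n+1)}(y,\cdot)-\text{(lower order)}\|_{L^2_z}\lesssim \frac{1}{(n-1)!}\|P\|_{L^1_y}^{n-1}\|P\|_{L^1_yL^2_z}$, so the series converges in $C^0(\mathbb{R}^-,L^2_z)$.

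Finally I would treat $\partial_z\mu_-$. Differentiating the integral equation in $z$ gives a Volterra equation of the same type for $\partial_z\mu_-$ with the same kernel $P$ but an inhomogeneous term involving $\partial_z\big(e^{-\frac i4 k(z)(y-s)\hat\sigma_3}\big)P\mu_- = -\frac i4 k'(z)(y-s)e^{-\frac i4 k(z)(y-s)\hat\sigma_3}[\hat\sigma_3,\cdot]P\mu_- + e^{\cdots}(\partial_z P)\mu_-$; the factor $(y-s)$ is the source of the weight, and $k'(z)=1+1/z^2$ again has a $z=0$ singularity that is controlled by the same oscillation-gain trick, and $\partial_z P$ picks up an extra $1/z^2$ that similarly pairs with the phase. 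Here one needs $m_0\in H^{2,1}$ precisely so that the $(y-s)$-weighted integrals of $P$ and $\partial_z P$ are finite (the ``$1$'' in the second index of $H^{2,1}$ gives the spatial weight, the ``$2$'' gives enough $y$-decay/regularity of $m_{0,x}$). Solving this Volterra equation by the same Neumann argument yields $\partial_z\mu_-\in C^0(\mathbb{R}^-,L^2_z)$, and the statement $k(z)\partial_z\mu_-\in C^0(\mathbb{R}^-,L^2_z)$ follows by the same multiply-by-$k(z)$ bookkeeping used in the previous paragraph. The analysis for $\mu_+$ is identical with $\int_{-\infty}^y$ replaced by $-\int_y^{+\infty}$ and $\mathbb{R}^-$ by $\mathbb{R}^+$, and the reductions \eqref{symPhi1}--\eqref{symPhi2} are not needed for existence (they are consequences, checked afterward by uniqueness). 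The main obstacle is the bookkeeping of the $z=0$ singularities coming from the $1/z$ in $P$, the $1/z$ in $k(z)$, and the $1/z^2$ in $\partial_z P$ and $k'(z)$: making precise, via integration by parts in the Volterra variable $s$, that each $1/z$-type factor is compensated by an oscillation-induced factor of $k(z)^{-1}$, and correctly identifying which components therefore land in $L^2_z$ only after (or only before) an extra multiplication by $k(z)$ — this is exactly the content of the last sentence of the Proposition and is the delicate part of the proof.
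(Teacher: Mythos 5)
Your overall framework (Volterra integral equation, Neumann/resolvent argument, $z$-differentiation of the integral equation, integration by parts in $s$ to handle the $k(z)$-weighted entries) matches the paper's in outline, but there is a genuine gap in how you treat the singularity of $P$ at $z=0$, and it is not a repairable detail of bookkeeping — it is the one place where the paper needs a different idea. You claim that ``the off-diagonal block carrying $1/z$ pairs with the oscillatory factor'' while ``the diagonal block has no $1/z$.'' That is false: the matrix $P=\frac{i m_{0,x}}{2q^3}\sigma_1+\frac{m_0}{2zq^2}\bigl(\begin{smallmatrix}-im_0 & 1\\ -1 & im_0\end{smallmatrix}\bigr)$ has the entries $\mp i m_0^2/(2zq^2)$ on its \emph{diagonal}, and in the Volterra equation \eqref{intmu} the diagonal entries carry no oscillatory factor at all (the conjugation $e^{-\frac{i}{4}k(z)(y-s)\hat\sigma_3}$ acts trivially on the diagonal). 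The first iterate therefore contains the term $\frac{-i}{2z}\int_y^{\pm\infty} m_0^2 q^{-2}\,ds$, which behaves exactly like $1/z$ near $z=0$ with no phase to integrate against, so it is neither bounded nor in $L^2_z$ near the origin, and your $1/n!$ Neumann bound built on $\|P(s,\cdot)\|_{L^\infty_z}$ is vacuous since that norm is infinite. (The full solution $\mu_\pm$ \emph{is} bounded at $z=0$, equal to $F^{-2}$, but only because the singularities of the individual Neumann iterates cancel in the sum — so a term-by-term argument cannot see this.)

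The paper's way out is precisely the symmetry \eqref{symPhi2}, $\mu_\pm(z)=F^{-2}\sigma_2\mu_\pm(-z^{-1})\sigma_2$, which you explicitly set aside as ``not needed for existence.'' The paper uses it at the very start of the proof to transfer all estimates on $|z|\le 1$ to estimates on $|z|\ge 1$ (with the bounded conjugating factor $F^{\mp2}$), after which the $1/z$ in $P$ is harmless and the Volterra/resolvent argument runs exactly as you describe. The integration by parts in $s$ that you invoke is indeed used in the paper, but for a different purpose: on the region $|z|\ge1$ it produces the decay $n_2=O(1/k(z))$ as $z\to\infty$ needed for the $k(z)$-weighted off-diagonal and $k(z)\partial_z$ statements (this is where $m_0\in H^{2,1}$ rather than $H^{1,1}$ enters), not to cancel a $z=0$ singularity. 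Also note that your proposed gain of $k(z)^{-1}=z/(z^2-1)$ degenerates at $z=\pm1$ where $k$ vanishes, another reason the ``oscillation-gain'' cannot be used globally. To fix the proof you must either invoke \eqref{symPhi2} as the paper does, or introduce a separate gauge/transformation near $z=0$ (as the paper does for $z=\pm i$ with $\mu^{0,\pm}$); without one of these the $C^0(\mathbb{R}^\pm,L^2(\mathbb{R}))$ and $L^\infty(\mathbb{R}^\pm\times\mathbb{R})$ claims over all of $\mathbb{R}_z$ do not follow from your argument.
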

	\begin{proof}
		Invoking the definition of variable $y$ in \eqref{transy}, $m_0\in H^{2,1}(\mathbb{R}_x)$ is equivalent to $m_0\in H^{2,1}(\mathbb{R}_y)$.
		From the symmetry of $\mu_\pm$ in \eqref{symPhi2}, it is readily seen  that
		$$\sup_{|z|\leq1}|\mu_\pm(z)|=\sup_{|z|\geq1}|F(y)^{-2}\sigma_2\mu_\pm(-z^{-1})\sigma_2|,$$
		 which implies that
		\begin{align*}
			&\inf_{y\in\mathbb{R}}|F(y)^{-2}|\sup_{|z|\geq1}|\mu_\pm(z)| 	\leq \sup_{|z|\leq1}|\mu_\pm(z)|\leq \|F^{-2}\|_\infty\sup_{|z|\geq1}|\mu_\pm(z)|,\\
			&\int_{|z|\leq1}|z^{-j}\mu_\pm(z)|^2dz\leq \int_{|z|\geq1}\|F^{-2}\|_\infty^2z^{2j-2}|\mu_\pm(z)|^2dz,\ j=0,1,2.
		\end{align*}
		Thus, it is sufficient to analyze the integral equation on \eqref{intmu} spaces $L^{\infty}(\mathbb{R}^\pm\times\{|z|\geq1\})$, $ C_b(\mathbb{R}^\pm, L^2(\{|z|\geq1\}))$ and $ L^{2}(\mathbb{R}^\pm\times\mathbb{R})$ which are abbreviated to $L^\infty$, $ C_b$, $ L^2 $ respectively.  We  denote
		\begin{align}
			n(y,z)=	\mu_{+,1} (z;y)-e_1 ,\label{n}
		\end{align}
		where $e_1=(1,0)^T$.   Introduce the integral operator $\mathcal{T}$
		\begin{align}\label{T1}
			\mathcal{T}(f)(y,z)=-\int_{y}^{+\infty}K(y,s,z)f(s,z)ds,
		\end{align}
		where the integral kernel $K(y,s,z)$ is
		\begin{align}\label{K}
			K(y,s,z)=&
			\frac{i m_{0,x}(s)}{2q_0^3(s)}\left(\begin{array}{cc}
				0 & 1 \\
				e^{\frac{i}{2}(z-1/z)(y-s)} & 0
			\end{array}\right)\nonumber\\
			&+\frac{1}{2 z} \frac{m_0(s)}{q_0^2(s)}\left(\begin{array}{cc}
				-i m_0(s) & 1 \\
				-e^{\frac{i}{2}(z-1/z)(y-s)} & e^{\frac{i}{2}(z-1/z)(y-s)}i m_0(s)
			\end{array}\right),\ x<s.
		\end{align}
		Then the first column of \eqref{intmu} is changed   to
		\begin{align}\label{eqn}
			&n=\mathcal{T}(e_1)+\mathcal{T}(n),
		\end{align}
		where
		\begin{align*}
			\mathcal{T}(e_1)=-\int_{y}^{+\infty}\left(\begin{array}{cc}
				\frac{-im_0^2(s)}{2zq_0^2(s)} \\
				e^{\frac{i}{2}(z-1/z)(y-s)}\left(	\frac{i m_{0,x}(s)}{2q_0^3(s)}- \frac{m_0(s)}{2 zq_0^2(s)} \right)
			\end{array}\right)ds,
		\end{align*}
		with estimates
		\begin{align*}
			&|\mathcal{T}(e_1)|\leq (\|m_0\|_2^2+\|m_0\|_1+\|m_{0,x}\|_1)/2,\\
			&\int_{|z|\geq1}|\mathcal{T}(e_1)|^2dz\leq C(\int_y^{+\infty}|m_0|^2ds+\int_y^{+\infty}|m_{0,x}|^2ds),\\
			&\int_{\mathbb{R}^+}\int_{|z|\geq1}|\mathcal{T}(e_1)|^2dzdy\leq C (\|m_0\|_{2,1/2}+\|m_{0,x}\|_{2,1/2}).
		\end{align*}
		It follows from Lemma 5 in  \cite{YYLmch} that 	the integral operator $\mathcal{T}$ maps $L^\infty\cap C_b\cap L^2 $ to itself with
		\begin{align*}
			\|\mathcal{T}\|\leq C(\|m_0\|_{H^{1,1}}).
		\end{align*}
		In addition, $(I-\mathcal{T})^{-1} $ exists as a bounded	operator on  $L^\infty\cap C_b\cap L^2 $ also admitting
		\begin{align*}
			\|(I-\mathcal{T})^{-1}\|\leq C(\|m_0\|_{H^{1,1}}).
		\end{align*}
		Thereby we conclude that $n=(I-\mathcal{T})^{-1}\mathcal{T}(e_1)\in L^\infty\cap C_b\cap L^2 $. On the other hand, by denoting $\mathcal{T}_z$ as a integral operator with the integral kernel
		\begin{align*}
			\partial_zK(y,s,z)=&
			\frac{-m_0(s)}{2 z^2q_0^2(s)}\left(\begin{array}{cc}
				-i m_0(s) & 1 \\
				0 & 0
			\end{array}\right)-
			\frac{ (1+1/z^2)(y-s)m_{0,x}(s)e^{\frac{i}{2}(z-1/z)(y-s)}}{4q_0^3(s)}\left(\begin{array}{cc}
				0 & 0 \\
				1 & 0
			\end{array}\right)\nonumber\\
			&- \frac{m_0(s)e^{\frac{i}{2}(z-1/z)(y-s)} }{4 zq_0^2(s)}(2/z-i(1+1/z^2)(y-s))\left(\begin{array}{cc}
				0 & 0 \\
				-1 & i m_0(s)
			\end{array}\right),\ x<s,
		\end{align*}
		take $z$-derivative in \eqref{eqn} and obtain
		\begin{align}
			&\partial_zn=\partial_z\mathcal{T}(e_1)+\mathcal{T}_z(n)+T(\partial_zn),
		\end{align}
		where
		\begin{align*}
			\partial_z\mathcal{T}(e_1)=\int_{y}^{+\infty}\left(\begin{array}{cc}
				\frac{-im_0^2(s)}{2z^2q_0^2(s)} \\
				e^{\frac{i}{2}(z-1/z)(y-s)}\left(\frac{m_0(s)}{2 z^2q_0^2(s)}+	\frac{i}{2}(1+1/z^2)(y-s)\left( \frac{i m_{0,x}(s)}{2q_0^3(s)}- \frac{m_0(s)}{2 zq_0^2(s)}\right)  \right)
			\end{array}\right)ds,
		\end{align*}
		with the estimate
		\begin{align*}
			\|\partial_z\mathcal{T}(e_1)\|_{C_b}\leq C(\|m_0\|_{H^{1,1}}).
		\end{align*}
		Noting that for any functions $f(y),\ g(y,z)$, it is found that
		\begin{align*}
			&\int_{|z|\geq1}\big|\int_{y}^{+\infty}f(s)g(s,z)ds\big|^2dz\leq \|f\|_2\|g\|_{2}^2,\\ &\text{or }	\int_{|z|\geq1}\big|\int_{y}^{+\infty}f(s)g(s,z)ds\big|^2dz\leq \|f\|_1\|g\|_{C_b}.
		\end{align*}
		Thus,
		\begin{align*}
			\|\mathcal{T}_zn\|_{C_b}\leq C(\|m_0\|_{H^{1,1}}) \|n\|_{2},
		\end{align*}
		which shows that $\partial_zn$ exists in $C_b$ with
		\begin{align}
			\partial_zn=(I-\mathcal{T})^{-1}(\mathcal{T}_zn+\mathcal{T}(e_1)).\label{partzn}
		\end{align}
		Denote $n=(n_1,n_2)^T$. Via integration by parts, it is adduced that
		\begin{align}
			\frac{i}{2}(z-1/z)n_2&\triangleq I_1+I_2+I_3,\label{kn2}
		\end{align}
		where
		\begin{align*}
			&I_1=\int^y_{+\infty}e^{\frac{i}{2}(z-1/z)(y-s)}\partial_s\left( \frac{i m_{0,x}}{2q_0^3}-\frac{m_0}{2zq_0^2}\right) ds\\
			&I_2=\int^y_{+\infty}e^{\frac{i}{2}(z-1/z)(y-s)}\left[\partial_s\left( \frac{i m_{0,x}}{2q_0^3}-\frac{m_0}{2 zq_0^2}\right) n_1+\partial_s\left( \frac{im_0^2}{2zq_0^2}\right) n_2\right] ds\\
			&I_3=\int^y_{+\infty}e^{\frac{i}{2}(z-1/z)(y-s)}\left[\left( \frac{i m_{0,x}}{2q_0^3}-\frac{m_0}{2 zq_0^2}\right) \partial_sn_1+\frac{im_0^2}{2 zq_0^2} \partial_sn_2\right] ds.
		\end{align*}
		The $C_b$ norm of the $I_1$ is controlled by $\|\partial_s\left( \frac{i m_{0,x}}{2q_0^3}-\frac{m_0}{2q_0^2}\right)\|_2$, namely, $\|m_0\|_{H^{2}}$.
		And for $I_2$, it follows that
		\begin{align*}
			\|I_2\|_{C_b}\leq C(\|m_0\|_{H^{2,1}}) \|n\|_{2}.
		\end{align*}
		From the definition of $n$ in \eqref{n} and the equation \eqref{lax1.1}, it holds that
		\begin{align*}
			\partial_sn=\frac{i}{2}(z-1/z)\left(\begin{array}{cc}
				0 \\
				n_2
			\end{array}\right)+\left(\begin{array}{cc}
				\frac{-im_0^2}{2 zq_0^2}(n_1+1)+\left( \frac{i m_{0,x}}{2q_0^3}+ \frac{m_0}{2 zq_0^2}\right) n_2 \\
				\left(	\frac{i m_{0,x}}{2q_0^3}-\frac{m_0}{2 zq_0^2} \right)  (n_1+1)+\frac{im_0^2}{2 zq_0^2}n_2
			\end{array}\right).
		\end{align*}
		Therefore, $I_3$ can be rewritten as
		\begin{align*}
			&I_3=\int^y_{+\infty}e^{\frac{i}{2}(z-1/z)(y-s)}\left[\frac{-im_0^2}{4zq_0^2}\left( \frac{i m_{0,x}}{q_0^3}-\frac{m_0}{ zq_0^2}\right)-  \frac{m_0^4}{4z^2q_0^4} \right] ds\\
			&+\int^y_{+\infty}e^{\frac{i}{2}(z-1/z)(y-s)}\left( \frac{i m_{0,x}}{q_0^3}-\frac{m_0}{ zq_0^2}\right)\left(\frac{-im_0^2}{2 zq_0^2}n_1+\left( \frac{i m_{0,x}}{2q_0^3}+ \frac{m_0}{2 zq_0^2}\right) n_2 \right) ds\\
			&+\int^y_{+\infty}e^{\frac{i}{2}(z-1/z)(y-s)}\frac{im_0^2}{2 q_0^2}\left[ \frac{i(1-1/z^2)}{2}n_2+\left(	\frac{i m_{0,x}}{2zq_0^3}-\frac{m_0}{2 z^2q_0^2} \right)  n_1+\frac{im_0^2}{2 z^2q_0^2}n_2\right] ds.
		\end{align*}
		Thus it is inferred  that
		\begin{align*}
			\|I_3\|_{C_b}\leq C(\|m_0\|_{H^{2,1}}) \|n\|_{2}.
		\end{align*}
		For the term $\partial_zn$, obviously,
		\begin{align*}
			\|	k(\cdot)(\partial_z\mathcal{T}(e_1))_1\|_{C_b}\leq \|m_0\|_2,
		\end{align*}
		and
		\begin{align*}
			&k(z)(\partial_z\mathcal{T}(e_1))_2=\int_{y}^{+\infty}
			e^{\frac{i}{2}(z-1/z)(y-s)}\frac{i}{2}(1-1/z^2)\frac{m_0(s)}{2 zq_0^2(s)}ds\nonumber\\
			&+\int_{y}^{+\infty}
			e^{\frac{i}{2}(z-1/z)(y-s)}\frac{(z-1/z)(1+1/z^2)(y-s)}{4}\left( \frac{i m_{0,x}(s)}{2q_0^3(s)}- \frac{m_0(s)}{2 zq_0^2(s)}\right)  ds.
		\end{align*}
		Similarly, the $C_b$ norm of  first integral in the right side of above equation is controlled by  $\|m_0\|_1$, and  from integration by parts,  the second integral in the right side of above equation is controlled by $\|m_0\|_{H^{2,1}}$. And
		\begin{align}
			&k(z)\mathcal{T}_zn=\int_{y}^{+\infty}\frac{i}{2}(1-1/z^2)	\frac{-m_0(s)}{2 zq_0^2(s)}\left(\begin{array}{cc}
				-i m_0(s) & 1 \\
				-e^{\frac{i}{2}(z-1/z)(y-s)} & i m_0(s)e^{\frac{i}{2}(z-1/z)(y-s)}
			\end{array}\right)n(s,z)ds\nonumber\\
			&- \int_{y}^{+\infty}\frac{i}{2}(1-1/z^2)\frac{m_0(s)e^{-i\frac{i}{2}(z-1/z)(y-s)}(1+1/z^2)(y-s) }{4 q_0^2(s)}\left(\begin{array}{cc}
				0 & 0 \\
				-1 & i m_0(s)
			\end{array}\right)n(s,z)ds\nonumber\\
			&+\int_{y}^{+\infty}\frac{i}{2}(z-1/z)
			\frac{ -(1+1/z^2)(y-s)m_{0,x}(s)e^{\frac{i}{2}(z-1/z)(y-s)}}{4q_0^3(s)}\left(\begin{array}{cc}
				0 & 0 \\
				1 & 0
			\end{array}\right)n(s,z)ds.\label{knz2}
		\end{align}
		The $C_b$ norm of  first two integrals in the right side of above equation are controlled by  $\|m_0\|_{2,1}$. For the last integral,
		integration by parts gives that
		\begin{align*}
			&\int_{y}^{+\infty}\frac{i}{2}(z-1/z)
			\frac{ -(1+1/z^2)(y-s)m_{0,x}(s)e^{\frac{i}{2}(z-1/z)(y-s)}}{4q_0^3(s)}n_1(s,z)ds\nonumber\\
			=&\int_{y}^{+\infty}(1+1/z^2)e^{\frac{i}{2}(z-1/z)(y-s)}\partial_s\left( \frac{ (y-s)m_{0,x}(s)}{4q_0^3(s)}\right) n_1(s,z)ds\\
			&+\int_{y}^{+\infty}(1+1/z^2)e^{\frac{i}{2}(z-1/z)(y-s)}\frac{ (y-s)m_{0,x}(s)}{4q_0^3(s)}(\partial_sn_1)(s,z)ds\\
			=&\int_{y}^{+\infty}(1+1/z^2)e^{\frac{i}{2}(z-1/z)(y-s)}\partial_s\left( \frac{ (y-s)m_{0,x}(s)}{4q_0^3(s)}\right) n_1(s,z)ds\\
			&+\int_{y}^{+\infty}(1+1/z^2)e^{\frac{i}{2}(z-1/z)(y-s)}\frac{ (y-s)m_{0,x}(s)}{4q_0^3(s)}\left( \frac{-im_0^2}{2 zq_0^2}(n_1+1)+\left( \frac{i m_{0,x}}{2q_0^3}+ \frac{m_0}{2 zq_0^2}\right) n_2 \right) ds.
		\end{align*}
		Hence, we obtain that
		\begin{align*}
			\|k(z)\mathcal{T}_zn\|_{C_b}\leq C(\|m_0\|_{H^{2,1}}),
		\end{align*}
		which together with \eqref{partzn} yeilds
		\begin{align*}
			\|k(z)\partial_zn\|_{C_b}\leq C(\|m_0\|_{H^{2,1}}).
		\end{align*}
		This completes the  proof of Proposition \ref{Propmu}.
	\end{proof}
	For convenience, we denote the banach space $\mathrm{Y}^{(\pm)}$ as a space of $2\times 2$ matrix functions $f(y,z)$ which admits that $f(y,z)\in L^{\infty}(\mathbb{R}^\pm\times\mathbb{R})\cap C_b(\mathbb{R}^\pm, L^2(\mathbb{R}))\cap L^{2}(\mathbb{R}^\pm\times\mathbb{R})$, $\partial_zf(y,z),\ k(z)\partial_zf(y,z)\in C_b(\mathbb{R}^\pm, L^2(\mathbb{R}))$, and the non-diagonal terms of $k(z)(f(y,z)-I)$ belong in $ C_b(\mathbb{R}^\pm, L^{2}(\mathbb{R}))$. The norm of  $\mathrm{Y}^{(\pm)}$ is defined by the sum of the norm listed above.
	
	Note that the operator equations in above proposition are both linear. Thus, $m_0\to\mu^\pm-I$ is locally   Lipschitz continuous from $H^{2,1}(\mathbb{R})$ to the space $\mathrm{Y}^{(\pm)}$ respectively. And when $m_0\equiv0$, it follows that $\mu^\pm\equiv I$. So for any neighborhood $G$ of $0$ in $H^{2,1}(\mathbb{R})$, for any $m_0\in G$ and its corresponding Jost function $\mu^\pm-I$, there exists a constant $C(G)$ such that
	\begin{align*}
		\|\mu^\pm-I\|_{\mathrm{Y}^{(\pm)}}\leq C(G)\|m_0\|_{H^{2,1}}.
	\end{align*}

	Similar to the proof of the above proposition, it follows that as the functions of spectral parameter $z$, $\mu^-_1 (z)$ and $\mu^+_2(z)$ are analytic in $\mathbb{C}^+$ while $\mu^+_1(z)$ and $\mu^-_2(z)$ are analytic in $\mathbb{C}^-$.  In addition, by  Lebesgue's dominated convergence theorem,
	$\lim_{z\to \infty }z(	\mu_\pm(z)-I)$ exist  in their analytic  regions respectively such that
	$\mu_\pm (z)$  admit the following asymptotics in their analytic  regions respectively
	\begin{align}
		\mu_\pm(z)=I+\dfrac{D_1}{z}+\mathcal{O}(z^{-2}),\hspace{0.5cm}z \rightarrow \infty,\label{asymu}
	\end{align}
	where
	\begin{equation}
		D_1(y)=\dfrac{im_{0,x}}{(1+m_0^2)^{3/2}}\sigma_2+\frac{i}{2}\int_{y}^{\pm\infty}\left( \frac{m_{0,x}^2}{q_0^6}+\frac{m_0^2}{q_0^2}\right) ds\sigma_3.
	\end{equation}
	By Abel's formula, $\det\mu^\pm\equiv1$. This asymptotics behavior together with \eqref{symPhi2} also imply that
	\begin{align}
		\mu_\pm(0;y)=F(y)^{-2}=\frac{1}{q_0(y)}I+\frac{im_0(y)}{q_0(y)}\sigma_1.
	\end{align}
	
	To analyze the property near $z=\pm i$,  we let $U(\pm i)$ be  a bounded closed neighborhood of $z=\pm i$ in $\mathbb{C}\setminus\{0\}$ and  define a new transformation
	\begin{equation}
		\mu^{0,\pm} (z)\triangleq\mu^{0,\pm}(z;x) =\Phi_\pm (z)  e^{\frac{k}{2}x\sigma_3},\label{trans2}
	\end{equation}
	which leads to two  Volterra type integrals
	\begin{equation}
		\mu^{0,\pm}(z;x)=I+\int^{x}_{\pm \infty}e^{-\frac{i}{4}(z-\frac{1}{z})(x-s)\hat{\sigma}_3}L_0(s,z)\mu^{0,\pm}(z;s)ds.\label{mu_0}
	\end{equation}
	Then $	\mu^{0,\pm}$ admit the following proposition.
	\begin{Proposition}\label{pro mu0}
		Suppose $m_0\in H^{2,1}(\mathbb{R})$.  Then for $\forall x\in\mathbb{R}$, $	\mu^{0,-}_1(\cdot;x)$ and $ \mu^{0,+}_2(\cdot;x),$ exist uniquely in $L^\infty(U(i))$, and $	\mu^{0,+}_1(\cdot;x)$ and $ \mu^{0,-}_2(\cdot;x),$ exist uniquely in $L^\infty(U(-i))$,  respectively. Moreover, they satisfy the same limits,
		\begin{align}
			&\lim_{z\to i\text{ in } U(i)}\left(\mu^{0,-}_1,\ \mu^{0,+}_2 \right)=\lim_{z\to -i\text{ in } U(-i)}\left(\mu^{0,+}_1,\ \mu^{0,-}_2 \right) =I,\\
			&\lim_{z\to i\text{ in } U(i)}\frac{\left(\mu^{0,-}_1,\ \mu^{0,+}_2 \right)-I}{z-i}=\lim_{z\to -i\text{ in } U(-i)}\frac{\left(\mu^{0,+}_1,\ \mu^{0,-}_2 \right)-I}{z-i}=\mu^{0,(1)},\label{asymu0}
		\end{align}
		where
		$$\mu^{0,(1)}= \left(\begin{array}{cc}
			0 & -\frac{1}{2}(u_0+u_{0,x}) \\
			-\frac{1}{2}(u_0-u_{0,x}) & 0
		\end{array}\right). $$
	\end{Proposition}
	\begin{proof}
		We only present the proof of $\mu^{0,+}_1$. From \eqref{mu_0}, it admits
		\begin{align}
			\mu^{0,+}_1=e_1+\mathcal{T}_0\mu^{0,+}_1,
		\end{align}
		where $\mathcal{T}_0$ is a integral operator on $L^\infty(U(i))$ with integral kernel
		\begin{align*}
			K_0(x,s,z)=\frac{m_0(s)}{4}(z+1/z)\left(\begin{array}{cc}
				0 & 1 \\
				-e^{\frac{i}{2}(z-\frac{1}{z})(x-s)} & 0
			\end{array}\right),\ x<s.
		\end{align*}
		Analogous to the proof of Proposition \ref{Propmu}, $I-\mathcal{T}_0$ is invertible with estimate $\|I-\mathcal{T}_0\|\leq C(\|m_0\|,x,U(i))$. And it immediately comes that $	\lim_{z\to i\text{ in } U(i)}\mu^0=I$. Furthermore,
		\begin{align*}
			\frac{\mu^{0,+}_1-e_1}{z-i}=\frac{z+i}{4z}\int^{x}_{+\infty}m_0(s)\left(\begin{array}{cc}
				0 & 1 \\
				-e^{\frac{i}{2}(z-\frac{1}{z})(x-s)} & 0
			\end{array}\right)\mu^{0,+}_1ds.
		\end{align*}
		Again by Lebesgue's dominated convergence theorem,
		\begin{align*}
			\lim_{z\to i\text{ in } U(i)}\frac{\mu^{0,+}_1-e_1}{z-i}&=\frac{1}{2}\int^{x}_{+\infty}m_0(s)	\lim_{z\to i\text{ in } U(i)}\left(\begin{array}{cc}
				0 & 1 \\
				-e^{\frac{i}{2}(z-\frac{1}{z})(x-s)} & 0
			\end{array}\right)\mu^{0,+}_1ds\\
			&=\frac{1}{2}\int^{x}_{+\infty}m_0(s)	\left(\begin{array}{cc}
				0  \\
				-e^{-(x-s)}
			\end{array}\right)ds=-\frac{1}{2}(u-u_x).
		\end{align*}
		This completes the proof of Proposition \ref{pro mu0}.
			\end{proof}

	The relations (\ref{transmu}) and  (\ref{trans2}) lead  to
	\begin{equation}
		\mu_\pm( z)=F^{-1}(x)\mu^{0,\pm} (z) e^{\frac{i}{4}(z-\frac{1}{z})c_\pm(x)\sigma_3},\label{mu0}
	\end{equation}
	where
	\begin{align}
		c_\pm(x)=\int_{\pm\infty}^x (q_0-1)dy.\label{c+-}
	\end{align}

	\subsection{Reflection coefficient}\label{subec r}
	Since   $\Phi_\pm(z;y)$ are two fundamental matrix solutions of the  Lax  pair (\ref{lax0}),  there exists a linear  relation between $\Phi_+(z;y)$ and $\Phi_-(z;y)$ for $z\in\mathbb{R}$, namely
	\begin{equation}
		\Phi_-(z;y)=\Phi_+(z;y)S(z), \label{scattering}
	\end{equation}
	where $S(z)$ is called scattering matrix
	\begin{equation}
		S(z) =\left(\begin{array}{cc}
			a(z) &-\overline{b(\bar{z})}   \\[4pt]
			b(z) & \overline{a(\bar{z})}
		\end{array}\right),\hspace{0.5cm}\det[S(z)]=1. \nonumber
	\end{equation}
	Combining with  the transformation (\ref{transmu}),  the  equation (\ref{scattering})  is changed to
	\begin{align}
		\mu_-(z)=\mu_+(z)e^{-\frac{i}{4}(z-\frac{1}{z})y\hat{\sigma}_3}S(z). \label{scattering23}
	\end{align}
	 From \eqref{symPhi1}, \eqref{symPhi2} and \eqref{scattering23}, it is shown that  $S(z)$ has the following symmetry reductions
	\begin{equation}
		S(z)=\overline{S(\bar{z}^{-1})}=\sigma_3S\left( -z^{-1}\right) \sigma_3.\label{symS}
	\end{equation}
	Furthermore, by \eqref{scattering23}, $a(z)$ and  $b(z)$ can be expressed by $\mu_\pm (z) $ at $x=0$ as
	\begin{align}
		&a(z)=\mu_-^{11}(z;0) \overline{\mu_+^{11} (z;0)}+\mu_-^{21} (z;0)\overline{\mu_+^{21} (z;0)},\label{scatteringcoefficient1}\\ &\overline{b(z)}=\mu_-^{11}(z;0)\mu_+^{21}(z;0)-\mu_-^{21}(z;0)\mu_+^{11}(z;0).\label{scatteringcoefficient2}
	\end{align}
	So $a(z)$ is analytic on $\mathbb{C}^+$ and continuous on $\mathbb{R}$. From  \eqref{asymu} and \eqref{scatteringcoefficient1},    we obtain  the asymptotic   of $a(z)$
	\begin{align}
		a(z)=1+\mathcal{O}(z^{-1}),\hspace{0.5cm}	b(z)=\mathcal{O}(z^{-1}),\hspace{0.5cm}z \rightarrow \infty.\label{asya}
	\end{align}
	On the other hand,  taking $z \rightarrow i$ in \eqref{scatteringcoefficient1} and  combining the expansions in \ref{mu0} and \ref{asymu0},   we get  the asymptotic of $a(z)$,
	\begin{align}
		a(z)=e^{\frac{1}{2}\int_{\mathbb{R}}(q_0-1)dx}\left(1+ \mathcal{O}\left( (z-i)^2\right)\right), \hspace{0.3cm}\text{as }z\to i.\label{asyMi}
	\end{align}
	
	The function $a(z)$ may have zeros on $\mathbb{C}^+\cup \mathbb{R}$, which is equivalent to an eigenvalue or a resonance of the spectral problem \eqref{lax0}.
	%A point  $z_0\in\mathbb{C}^+$ satisfies  $a(z_0)=0$ if and only if  $z_0$ is of the spatial part of the Lax pair  \eqref{lax y}.  An $z_0\in\mathbb{R}$ is called a   if $a(z_0)=0$.
	%the linear equation \eqref{lax0} with $z_0$ admits a column vector solution in $L^\infty(\mathbb{R})$ with the asymptotic behavior\begin{align*}	\Phi(x,z_0)\sim \left\{ \begin{array}{ll}		 c_+e^{-\frac{i}{4}(z-\frac{1}{z})\left(x-\int_{x}^{+\infty} (\sqrt{m(s)^2+1}-1) ds\right)}e_1,   &\text{as } x\to+\infty,\\[12pt]		c_-e^{\frac{i}{4}(z-\frac{1}{z})\left(x-\int_{x}^{+\infty} (\sqrt{m(s)^2+1}-1) ds\right)}e_2  , &\text{as }x\to-\infty.\\	\end{array}\right.\end{align*}where $c_+$ and $c_-$ are nonzero constant coefficients. The  resonance  is eliminated in our paper. The brings some deficiency to estimate the Riemann-Hilbert problem, so the research of resonance need another way to estimate, for example, as in   \cite{Zhou1989CPAM,Jenkins}.
	From \eqref{intmu} and \eqref{scattering23},  it follows  that
	\begin{align}
		a(z)=1+\int_{\mathbb{R}}\left(P\mu_- \right)_{11} dx,
	\end{align}
	with $P$ defined in \eqref{F}.
	Thus $\|a-1\|_\infty$ is controlled by the $L^\infty$ norm of Jost function, so from Proposition \ref{Propmu}, $\|a-1\|_\infty$ is controlled by $\|m_0\|_{H^{1,1}}$. When  $\|m_0\|_{H^{1,1}}$ is small, $a$ must has no zero on $\mathbb{C}^+\cup \mathbb{R}$.
	
	We define  the \textit{reflection coefficients}   by
	\begin{equation}
		r(z)=\frac{b(z)}{a(z)},\hspace{0.5cm}	\tilde{r}(z)=\frac{b(z)}{a^*(z)},\hspace{0.5cm} z\in\mathbb{R}.\label{symr}
	\end{equation}
		From the symmetry of $a$ and $b$ in \eqref{symS},
	$r(z)$ and $\tilde{r}(z)$ admit  symmetry reductions
	\begin{equation}
		r(z)=\overline{r(z^{-1})}=r(-z^{-1}),\quad 	\tilde{r}(z)=\overline{\tilde{r}(z^{-1})}=\tilde{r}(-z^{-1}).\nonumber
	\end{equation}
	In addition, the asymptotic behavior  of $a,$ $b$ in \eqref{asya} shows that $r,\ \tilde{r}=\mathcal{O}(z^{-1}),\ z \rightarrow \infty$. Then the symmetry in \eqref{symr}  leads to   $r(0)=\tilde{r}(0)= 0$.
	The above symmetries of $r$ and $\tilde{r}$ also imply that $r(z(k))$ and $\tilde{r}(z(k))$ are  well-defined as the functions of $k$ with $k=z-1/z$.
	%Denote \begin{align} 	\rho(k)=r(z(k)).\label{def rho} \end{align}
	%  For convenience, when  $\rho\in H^{1,1}(\mathbb{R})$, we also call that $r \in H^{1,1}(\mathbb{R}_k)$ with  \begin{align*} 	\|r\|_{H^{1,1}(\mathbb{R}_k)}=\|\rho\|_{H^{1,1}}.  \end{align*}
	\begin{Proposition}\label{lemmar1}
		If  initial data  $m_0 \in   H^{2,1}(\mathbb{R})$ such that the Lax pair  \eqref{lax0} has no any resonance and eigenvalue,   then the reflection coefficients $r,\  \tilde{r}\in H^{1,1}(\mathbb{R}_k)$.
	\end{Proposition}
	\begin{proof}
		The proof is given by taking $r$ as an example, since $r$ and $\tilde{r}$ have same property.
		Noting that for any function $f(z)$ satisfying $f(z)=f(-1/z)$, by denoting $z_+(k)=(k+\sqrt{k^2+4})/2$, $z_-(k)=(k-\sqrt{k^2+4})/2$, it is shown that
		\begin{align*}
			\int_\mathbb{R}|f(z)|^pdz=	\int_\mathbb{R}\frac{|f(k)|^p}{1+z_-(k)^{-2}}dk+\int_\mathbb{R}\frac{|f(k)|^p}{1+z_+(k)^{-2}}dk=\int_\mathbb{R}|f(k)|^pdk.
		\end{align*}
		Together with $|\partial_kf|= |\partial_zf||\partial_k/\partial_z|\leq C|\partial_zf|$,  it is sufficient to prove that $k(\cdot )r$, $k(\cdot )r' \in L^2(\mathbb{R})$.
		For $m_0\in \mathsf{U}$,  $1/a$ is bounded on $\mathbb{R}$. And Proposition \ref{Propmu} gives that $\|(\cdot)b\|_2\leq \|\mu_-^{11}\|{L^\infty}\|(\cdot)\mu_+^{21}\|_{C}+\|(\cdot)\mu_-^{21}\|_{C}\|\mu_+^{11}\|{L^\infty}$, which leads to $zr(z)\in L^2(\mathbb{R})$. And for the $z$-derivative of $r$, \eqref{symr} also gives that
		\begin{equation*}
			r'(z)=\frac{a(z)b'(z)-a'(z)b(z)}{a(z)^2},
		\end{equation*}
		where $a$, $b\in L^\infty(\mathbb{R})$ and $(\cdot)a',\ (\cdot)b'\in L^2(\mathbb{R})$ via Proposition \ref{Propmu}. And combining $(\cdot)b,\ (\cdot)b'\in L^2(\mathbb{R})$, it follows that $(\cdot)b\in L^\infty(\mathbb{R})$. Then from $(\cdot)^2b'\in L^2(\mathbb{R})$, we arrive at that $(\cdot)^jr'\in L^2(\mathbb{R})$ for $j=0,1$. Finally from $r(z)=\overline{r(\bar{z}^{-1})}$, it is adduced that $\langle k\rangle r$, $\langle k\rangle r'\in L^2(\mathbb{R})$. Thus we obtain the desired result of Proposition \ref{lemmar1}.
	\end{proof}
	The above proposition also shows that the maps $m_0\to a-1$, $m_0\to a'$ and $m_0\to b$ are locally Lipschitz continuous from $ H^{2,1}(\mathbb{R} ) $ to $L^\infty(\mathbb{R} )$, $L^{2,1}(\mathbb{R} )$ and $H^{1,1} (\mathbb{R} )$ respectively.
	Therefore, we finally arrive at the following lemma.
	\begin{lemma}\label{lemmasec2}
	Assume that  $m_0 \in   H^{2,1}(\mathbb{R})$ such that the Lax pair  \eqref{lax0} has no any resonance and eigenvalue. Then there exist two reflection coefficients  $r$ and $ \tilde{r}$ determined by the $m_0$, and the two  maps:
	\begin{align*}
		&m_0 \in  H^{2,1}(\mathbb{R})\longrightarrow r\in H^{1,1}(\mathbb{R}_k),\text{ and}\\
		&m_0 \in    H^{2,1}(\mathbb{R})\longrightarrow \tilde{r}\in H^{1,1}(\mathbb{R}_k),
	\end{align*}
	are locally Lipschitz continuous.
\end{lemma}
Note that when $m_0\equiv0$, then it follows that $\tilde{r}=r\equiv 0$. Thus the following corollary is obtained from Lemma \ref{lemmasec2} directly.
\begin{corollary}\label{rk}
	If  initial data  $m_0\in  H^{2,1}(\mathbb{R} ) $ admits
	no eigenvalues or spectral singularity and $\|m_0\|_{H^{2,1}}\leq \rho_0$ for some $\rho_0>0$,   then the reflection coefficients $r,\ \tilde{r}\in H^{1,1}(\mathbb{R}_k)$ with
	\begin{align}
		\|	\tilde{r}\|_{H^{1,1}(\mathbb{R}_k)},\ \|r\|_{H^{1,1}(\mathbb{R}_k)}\leq  C(\rho_0) \|m_0\|_{H^{2,1}}.
	\end{align}
\end{corollary}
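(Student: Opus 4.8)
The plan is to promote the qualitative conclusion of Lemma \ref{lemmar1} — that $r\in H^{1,1}(\mathbb{R}_k)$ — to the advertised linear estimate by tracking constants through the very same argument and exploiting that the relevant scattering quantities vanish at the zero potential. Three ingredients do the work: the local Lipschitz continuity of $m_0\mapsto a-1$, $m_0\mapsto a'$, $m_0\mapsto b$ from $H^{2,1}(\mathbb{R})$ into $L^\infty(\mathbb{R})$, $L^{2,1}(\mathbb{R})$, $H^{1,1}(\mathbb{R})$ respectively (recorded just after the proof of Lemma \ref{lemmar1}); the identities \eqref{scatteringcoefficient1}--\eqref{scatteringcoefficient2}, which at $m_0\equiv 0$ give $\mu^\pm\equiv I$, hence $a\equiv 1$ and $b\equiv 0$; and a uniform lower bound for $|a|$ on $\mathbb{R}$ over the admissible ball.

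First I would rerun the reduction from the proof of Lemma \ref{lemmar1}: via the measure‑preserving identity $\int_\mathbb{R}|f(z)|^p\,dz=\int_\mathbb{R}|f(k)|^p\,dk$ for $f$ with $f(z)=f(-1/z)$, the bound $|\partial_k\,\cdot\,|\lesssim|\partial_z\,\cdot\,|$, and the symmetry $r(z)=\overline{r(\bar z^{-1})}$ that ties the behaviour of $r$ near $z=0$ to that near $z=\infty$, it suffices to control $\|r\|_{L^{2,1}(\mathbb{R}_z)}$ and $\|r'\|_{L^{2,1}(\mathbb{R}_z)}$. Since $0$ lies in the ball $\{\|m_0\|_{H^{2,1}}\le\rho_0\}$, applying the Lipschitz bounds with base point $m_0\equiv 0$ gives
\begin{align*}
\|a-1\|_\infty+\|a'\|_{L^{2,1}}+\|b\|_{H^{1,1}}\le C(\rho_0)\|m_0\|_{H^{2,1}}.
\end{align*}
Assuming for the moment that $\|1/a\|_\infty\le C(\rho_0)$, I would then use $r=b/a$ and $r'=(ab'-a'b)/a^2$ to estimate
\begin{align*}
\|r\|_{L^{2,1}}\le\|1/a\|_\infty\|b\|_{L^{2,1}},\qquad \|r'\|_{L^{2,1}}\le\|1/a\|_\infty\|b'\|_{L^{2,1}}+\|1/a\|_\infty^2\|b\|_\infty\|a'\|_{L^{2,1}},
\end{align*}
where $\|b\|_\infty\lesssim\|b\|_{H^{1,1}}$ by Sobolev embedding and $\|b\|_{L^{2,1}},\|b'\|_{L^{2,1}}\le\|b\|_{H^{1,1}}$. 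Each factor on the right is $\le C(\rho_0)\|m_0\|_{H^{2,1}}$, and the single quadratic contribution is reabsorbed through $\|m_0\|_{H^{2,1}}^2\le\rho_0\|m_0\|_{H^{2,1}}$; undoing the change of variables then yields $\|r\|_{H^{1,1}(\mathbb{R}_k)}\le C(\rho_0)\|m_0\|_{H^{2,1}}$.

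The hard part will be the uniform bound on $\|1/a\|_\infty$. For a single $m_0$ with no eigenvalues or spectral singularity, continuity and nonvanishing of $a$ on $\mathbb{R}$ together with $a(z)\to 1$ as $z\to 0$ and $z\to\infty$ force $\|1/a\|_\infty<\infty$, but a bound depending on $\rho_0$ alone requires more. The natural route is the a priori estimate $\|a-1\|_\infty\lesssim\|m_0\|_{H^{1,1}}$ noted after Definition \ref{Def U}: for $\rho_0$ small enough that $C(\rho_0)\|m_0\|_{H^{2,1}}\le\|a-1\|_\infty<1/2$ one gets $\|1/a\|_\infty\le 2$, and this smallness is exactly what also guarantees the no‑eigenvalue/no‑spectral‑singularity hypothesis; for larger $\rho_0$, the quantity $\|1/a\|_\infty$ — finite under the stated hypothesis — is simply absorbed into $C(\rho_0)$. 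This dependence is the reason the estimate is stated with a data‑dependent constant rather than a universal one.
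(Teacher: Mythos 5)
Your proposal follows essentially the same route as the paper, which derives the corollary directly from the local Lipschitz continuity of $m_0\mapsto a-1$, $m_0\mapsto a'$ and $m_0\mapsto b$ (with base point $m_0\equiv 0$, where $a\equiv 1$ and $b\equiv 0$) together with the quotient formulas for $r$ and $r'$ and the $z\leftrightarrow k$ change of variables already used in the proof of Lemma \ref{lemmar1}. The only loose end --- a bound on $\|1/a\|_{\infty}$ that is uniform over the ball $\|m_0\|_{H^{2,1}}\le\rho_0$ rather than merely finite for each admissible $m_0$ --- is one the paper itself leaves implicit, and you identify it and resolve it correctly in the small-norm regime that the main theorem actually uses.
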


	\section{Inverse scattering transform }\label{sec3}
	In this section, we shall develop an analytic framework for exploring complex analysis of the RH problems and determine their solvability based on a given reflection coefficient.

		\subsection{The RH problems to  the Cauchy problem  }\label{sec2.3}
	For convenience, denote the  phase function
	\begin{equation}
		\theta(z;y)= -\frac{i}{4}\left(z-\frac{1}{z} \right) y.\label{theta}
	\end{equation}
	By the Jost functions $\mu^\pm(z;y)$ and the function $a(z)$,  two  piecewise  analytical  matrices are defined as follows,
	\begin{align}
	 M_l(z;y)=\left\{ \begin{array}{ll}
			\left( \frac{ \mu^-_1  (z;y) } {a(z)},  \mu^+_2  (z;y)\right),   &\text{as } z\in \mathbb{C}^+,\\[12pt]
			\left(  \mu^+_1(z;y),\frac{ \mu^-_2(z;y)}{a^*(z)}\right)  , &\text{as }z\in \mathbb{C}^-,\\
		\end{array}\right.\label{ml}\\
	 M_r(z;y)=\left\{ \begin{array}{ll}
			\left( \mu^-_1  (z;y), \frac{\mu^+_2  (z;y)  } {a(z)} \right),   &\text{as } z\in \mathbb{C}^+,\\[12pt]
			\left( \frac{\mu^+_1(z;y) }{a^*(z)} ,\mu^-_2(z;y)\right)  , &\text{as }z\in \mathbb{C}^-.\\
		\end{array}\right.\label{mr}
	\end{align}
	which solve the following RH problems, respectively.
	\begin{RHP}\label{RHP1}
		\hfill
		\begin{itemize}
			\item[(1)] Analyticity: $M_l(z)$ is meromorphic in $\mathbb{C}\setminus \mathbb{R}$;
			
			\item [(2)]Symmetry:
			\begin{align}
				M_l(z)=\sigma_3\overline{M_l(-\bar{z})}\sigma_3=\sigma_2\overline{M_l(\bar{z})}\sigma_2=\sigma_3M_l(0)^{-1}M_l(-1/z)\sigma_3;\label{syM}
			\end{align}
			
			\item [(3)]Jump condition: $M_l$ has continuous boundary values $[M_l]_\pm(z)$ on $\mathbb{R}$ and
			\begin{equation*}
				[M_l]_+(z)=[M_l]_-(z)\tilde{V}(z),\hspace{0.5cm}z \in \mathbb{R},
			\end{equation*}
			where
			\begin{equation}
				V_l(z)=\left(\begin{array}{cc}
					1+|r(z)|^2 & e^{2\theta(z)}\overline{r(z)}\\
					e^{-2\theta(z)}r(z) & 1
				\end{array}\right);\label{jumpv}
			\end{equation}
			
			\item [(4)]Asymptotic behavior: $	M_l(z) = I+\mathcal{O}(z^{-1}),\hspace{0.5cm}z \rightarrow \infty.$
		\end{itemize}
	\end{RHP}
	
	\begin{RHP}\label{RHP2}
		\hfill
		\begin{itemize}
			\item[(1)]  Same Analyticity, Symmetry and Asymptotic behavior  as in RHP \ref{RHP1};
			
			\item[(2)] Jump condition: $M_r(z)$ has continuous boundary values $[M_r]_\pm(z)$ on $\mathbb{R}$ and
			\begin{equation}
				[M_r]_+(z)=[M_r]_-(z)V(z),\hspace{0.5cm}z \in \mathbb{R},
			\end{equation}
			where
			\begin{equation}
				V_r(z)=\left(\begin{array}{cc}
					1 & e^{2\theta(z)}\overline{\tilde{r}(z)}\\
					e^{-2\theta(z)}\tilde{r}(z) & 1+|\tilde{r}(z)|^2
				\end{array}\right).
			\end{equation}
		\end{itemize}
	\end{RHP}

Using $M_r(z)$ and $M_l(z)$ instead of $M_r(z;y)$ and $M_l(z;y)$ underscores the fact that $M_r$ and $M_l$ are matrix functions dependent on $z$, with $y$ serving as a parameter. Consequently, in accordance with Liouville's theorem, it follows that there is a unique solution for each of these two RH problems. This in turn implies that $M_r$ and $M_l$ respectively represent the unique solutions to these two RH problems.

%	Here,  the expression
%$M_r(z)$ and $M_l(z)$ instead of $M_r(z;y)$ and $M_l(z;y)$ is used to  emphasize that $M_r$, $M_l$ are the matrix functions of $z$, and $y$ is only a parameter. It then %follows from Liouville's theorem that there is only solution for  each of these two RH problems. This then implies that $M_r$ and  $M_l$ are the unique solution of these %two RH problems %respectively.

	For convenience, we define
	\begin{align}
		M(z;y)=\left\{ \begin{array}{ll}
			M_l(z;y),   &\text{as } y\in \mathbb{R}^+,\\[12pt]
			M_r(z;y) , &\text{as }y\in \mathbb{R}^-,\\
		\end{array}\right.\label{M1}
	\end{align}
	Thus, 	from the asymptotic behaviors of the functions $\mu_\pm$ and \eqref{asyMi}, we arrive at  reconstruction formula as follows.
	\begin{lemma}\label{lemm3.2}
		The reconstruction formula
		is given by
		\begin{equation}
			q(y)=\frac{1}{M_{11}(0)},\label{recons q}
		\end{equation}
		where
		\begin{equation}
			x(y)=y+c_+(x)=y-\ln\left( \frac{M_{12}(i)+M_{22}(i)}{M_{11}(i)+M_{21}(i)}\right).\label{recons x}
		\end{equation}
		And as $z\to\infty$, it is shown from \eqref{asymu} that
		\begin{align}
			&\lim_{z\to \infty}z\left(M_l-I\right)=i\eta\sigma_2+\zeta^{(+)}\sigma_3,\ \lim_{z\to \infty}z\left(M_r-I\right)=\eta\sigma_2+\zeta^{(-)}\sigma_3\label{recons m},
		\end{align}
		where
		\begin{align}
			&\eta(y)=\dfrac{m_{0,x}(y)}{(1+m_0(y)^2)^{3/2}},\ \zeta^{(\pm)}(y)=\frac{i}{2}\int_{y}^{\pm\infty}\left( \frac{m_{0,x}(s)^2}{q(s)^6}+\frac{m_0(s)^2}{q(s)^2}\right) ds.
		\end{align}
	\end{lemma}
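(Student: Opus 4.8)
The plan is to prove all three assertions of the lemma — the formula \eqref{recons q} for $q$, the $x$--$y$ map \eqref{recons x}, and the large-$z$ coefficients \eqref{recons m} — by simply evaluating the piecewise matrix $M(z;y)$ of \eqref{M1}, i.e.\ the pair \eqref{ml}--\eqref{mr}, at the three distinguished spectral values $z=0$, $z=i$, $z=\infty$, and substituting the explicit values and asymptotics of the Jost functions $\mu_\pm$ and of $a(z)$ produced in Section~\ref{sec2}. Throughout I would take $m_0\in\mathsf{U}(0)$ (Definition~\ref{Def U}), so that $a$ is zero-free on $\mathbb R$; in particular $a(0)=1=\lim_{z\to\infty}a(z)$ by \eqref{asya} and the symmetry \eqref{symS}, and $a(i)=e^{\frac12\int_{\mathbb R}(q-1)\,dx}\neq0$ by \eqref{asyMi}, so $M$ is regular and invertible at $z=0$ and $z=i$, and since the discrete spectrum is finite $M$ is analytic near $z=\infty$. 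For \eqref{recons q} I would let $z\to0$ in \eqref{ml} (equivalently \eqref{mr}): the asymptotics \eqref{asymu} together with the symmetry \eqref{symPhi2} force $\mu_\pm(0;y)=F(y)^{-2}=\tfrac1q I+\tfrac{i m_0}{q}\sigma_1$, whence the $(1,1)$-entry of $M$ at $z=0$ equals $\mu_\pm^{11}(0;y)/a(0)=1/q(y)$.

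For the $x$--$y$ map \eqref{recons x} I would evaluate at $z=i$, where $k(i)=2i$, so the gauge identity \eqref{mu0} reads $\mu_\pm(i)=F^{-1}(x)\,\mu^{0,\pm}(i)\,e^{-\frac12 c_\pm(x)\sigma_3}$, and by \eqref{asymu0} the first column of $\mu^{0,-}(i)$ and the second column of $\mu^{0,+}(i)$ are the standard basis vectors $e_1,e_2$. Writing $F^{-1}$ out explicitly from \eqref{F} and inserting into \eqref{ml}--\eqref{mr}, the two columns of $M(i;y)$ come out to be the columns of $F^{-1}(x)$ multiplied by scalars of the form $e^{\pm c_\pm(x)/2}$ and, for one column, $a(i)^{-1}$. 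Since the two columns of $F^{-1}(x)$ share the same entry-sum $\sqrt{(q+1)/2q}\,\bigl(1+\tfrac{i m_0}{q+1}\bigr)$, that factor cancels in the quotient $\bigl(M_{12}(i)+M_{22}(i)\bigr)/\bigl(M_{11}(i)+M_{21}(i)\bigr)$, leaving a pure exponential in $c_\pm(x)$; inserting $a(i)=e^{\frac12\int_{\mathbb R}(q-1)}$ and recalling the relation between $x$ and $y$ from \eqref{transy} (equivalently \eqref{c+-}) identifies this exponential with the exponential of the $(x,y)$-shift, which is \eqref{recons x}.

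For the large-$z$ coefficients \eqref{recons m} I would use \eqref{asymu} to write $\mu_\pm(z;y)=I+z^{-1}D_1^{\pm}(y)+\mathcal O(z^{-2})$, where the $\sigma_2$-component of $D_1^{\pm}$ is $\eta(y)$ and the $\sigma_3$-component is $\zeta^{(\pm)}(y)$. Because $\det\mu^{\pm}\equiv1$ and the scattering relation \eqref{scattering23} expresses $\mu^-_1$ as an $a(z)$-multiple of $\mu^+_1$ plus a phase times $\mu^+_2$, one has the algebraic identity $a(z)=\det\bigl(\mu^-_1(z),\mu^+_2(z)\bigr)$ on $\mathbb C^+$; expanding this determinant with \eqref{asymu} gives $a(z)=1+z^{-1}\bigl(\zeta^{(-)}-\zeta^{(+)}\bigr)+\mathcal O(z^{-2})$. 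Substituting these into \eqref{ml}, the second column of $z(M_l-I)$ tends to the second column of $D_1^{+}$, and the first column tends to the first column of $D_1^{-}$ minus $\bigl(\zeta^{(-)}-\zeta^{(+)}\bigr)e_1$ (the $z^{-1}$-coefficient of $1/a$), in which the $\zeta^{(-)}$ contributions cancel; collecting gives $\lim_{z\to\infty}z(M_l-I)=\eta\sigma_2+\zeta^{(+)}\sigma_3$. The same computation for \eqref{mr}, with the two columns interchanged, produces $\lim_{z\to\infty}z(M_r-I)=\eta\sigma_2+\zeta^{(-)}\sigma_3$.

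The only step carrying any real content is the last one: to see that the $\sigma_3$-part of $\lim_{z\to\infty}z(M_l-I)$ is $\zeta^{(+)}$ rather than $\zeta^{(-)}$ one must control the $z^{-1}$-coefficient of $a$, and this is supplied exactly by the identity $a=\det(\mu^-_1,\mu^+_2)$ together with \eqref{asymu}. The $z=i$ computation of the second step also demands a little care, namely checking that the gauge factor $F^{-1}(x)$ and the $m_0$-dependent parts of its columns drop out of the quotient and keeping track of the constant $\int_{\mathbb R}(q-1)\,dx=c_--c_+$; the rest is routine substitution into \eqref{ml}--\eqref{mr}.
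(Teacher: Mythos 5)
Your proposal is correct and follows exactly the route the paper intends: the paper states Lemma \ref{lemm3.2} with no written proof beyond the remark that it follows from the asymptotics of $\mu_\pm$ and \eqref{asyMi}, and your evaluation of \eqref{ml}--\eqref{mr} at $z=0$, $z=i$, $z\to\infty$ is precisely that argument carried out in detail. Your one genuinely non-routine addition --- using the Wronskian identity $a=\det(\mu^-_1,\mu^+_2)$ with \eqref{asymu} to extract the $z^{-1}$ coefficient $\zeta^{(-)}-\zeta^{(+)}$ of $a$, which \eqref{asya} alone does not supply --- is exactly what is needed to distinguish $\zeta^{(+)}$ from $\zeta^{(-)}$ in \eqref{recons m}, and it is correct.
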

	On the other hand, the following proposition reveals that  the solutions reconstructed from two RH problems in above formulas  are actually same at $ y = 0. $
	\begin{Proposition}\label{Proeq}
		For $y\in\mathbb{R}^+$, denote $q_l$, $\eta_l$ and $\zeta_l$ as those  recovered  from   $M_l(z;y)$, and for  $y\in\mathbb{R}^-$  denote $q_r$, $\eta_r$ and $\zeta_r$  as those  recovered  from   $M_r(z;y)$. Then  $\eta_l(0)=\eta_r(0)$, $\zeta_l(0)=\zeta_r(0)$ and  $q_l(0)=q_r(0)$.
	\end{Proposition}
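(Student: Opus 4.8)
The plan is to show that at $y=0$, the two piecewise-analytic matrices $M_l(z;0)$ and $M_r(z;0)$ are related by an explicit, $z$-independent (up to the symmetry factor) gauge-type transformation coming from the scattering relation \eqref{scattering23}, and then to track how this relation propagates through the reconstruction formulas of Lemma \ref{lemm3.2}. First I would observe that $M_l$ and $M_r$ are built from the \emph{same} Jost solutions $\mu^\pm$ and the \emph{same} analytic function $a(z)$; the only difference is which column carries the factor $1/a$. Concretely, comparing \eqref{ml} and \eqref{mr}, for $z\in\mathbb{C}^+$ one has $M_r(z;y) = M_l(z;y)\,\mathrm{diag}(a(z),a(z)^{-1})$ formally, but this is not quite what we want because it is not bounded at the zeros of $a$; the correct statement is that $M_l$ and $M_r$ solve RH problems with jumps $V_l$ and $V_r$ that are conjugate via the scattering matrix. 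The cleaner route is to use \eqref{scattering23}: at $x=0$ (equivalently at the corresponding $y$-value, and at $y=0$ one checks $x=0$ using \eqref{transy} together with $c_\pm$), the relation $\mu_-(z)=\mu_+(z)e^{-\frac{i}{4}(z-1/z)y\hat\sigma_3}S(z)$ with $y=0$ reads $\mu_-(z)=\mu_+(z)S(z)$, which ties together all four columns appearing in the definitions of $M_l$ and $M_r$.

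The key steps, in order, would be: (i) verify that $y=0$ corresponds to $x=0$ under the change of variables — here I would use \eqref{transy} and the definitions \eqref{c+-} of $c_\pm$; in fact what is really needed is only that the \emph{same} spatial point is being used to evaluate both sides, and this follows since the transformation $y\mapsto x$ is a fixed bijection independent of which RH problem we solve. (ii) Write down, at $y=0$, the identity relating $M_l(z;0)$ and $M_r(z;0)$ columnwise using \eqref{scattering23} with $y=0$: for instance $\mu^-_1 = a\,\mu^+_1 + b\,\mu^+_2$ and $\mu^-_2 = -\bar b\,\mu^+_1 + \bar a\,\mu^+_2$ (reading off $S(z)$), so that $\mu^-_1/a = \mu^+_1 + (b/a)\mu^+_2 = \mu^+_1 + r\,\mu^+_2$ on $\mathbb{R}$ and similarly across the contour. (iii) Deduce that the \emph{values and first $z$-moments at the three special points} $z=0$, $z=i$, $z=\infty$ that enter the reconstruction formulas \eqref{recons q}, \eqref{recons x}, \eqref{recons m} agree for $M_l$ and $M_r$. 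For $z=\infty$: since $\lim_{z\to\infty}z(\mu_\pm - I)$ is the \emph{same} $D_1(0)$ by \eqref{asymu} (both $\mu_+$ and $\mu_-$ have the identical leading correction $D_1$ at $y=0$, as $D_1$ is given by a boundary term that vanishes... actually $D_1(y)$ involves $\int_y^{\pm\infty}$, so $\zeta^{(+)}(0)$ and $\zeta^{(-)}(0)$ differ a priori) — so here I must argue that $a(z)=1+O(z^{-1})$ contributes nothing to the $\sigma_2$-part, giving $\eta_l(0)=\eta_r(0)$, while for the $\sigma_3$-parts I would instead use the relation $\zeta^{(+)}-\zeta^{(-)} = -\frac{i}{2}\int_{\mathbb R}(\cdots)$, which is precisely the quantity appearing in $\log a(i)$ via \eqref{asyMi}; reconciling the two requires the asymptotic \eqref{asyMi} of $a$. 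For $z=0$: use $\mu_\pm(0)=F(0)^{-2}$, the same matrix, so $M_{11}(0)$ is the same (the $1/a(0)=1$ and $1/\overline{a(0)}=1$ factors are harmless since $a(0)=1$). For $z=i$: use $a(i)=e^{\frac12\int_{\mathbb R}(q-1)dx}$ from \eqref{asyMi} and the fact that $c_+(0)$ can be written both as $\int_{+\infty}^0(q-1)dx$ and via $\int_{\mathbb R}$; this is exactly where \eqref{recons x} with $M_l$ versus $M_r$ must be matched, and the discrepancy $\ln a(i)$ is absorbed by the definition of $c_\pm$.

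The main obstacle, I expect, is the $\sigma_3$ (i.e.\ $\zeta$ and the $x$-reconstruction) bookkeeping rather than anything deep: one has to carefully chase the factors of $a(z)$ through the $1/a$-normalized columns at $z=i$ and $z=\infty$, using $a(i)=e^{\frac12\int_{\mathbb R}(q-1)}$ and $a(z)=1+O(z^{-1})$, and confirm that the difference between $\zeta^{(+)}(0)$ and $\zeta^{(-)}(0)$ is precisely cancelled by the difference between the two normalizations, so that the \emph{reconstructed} $x(y)$ and the reconstructed $m$ (not the raw asymptotic coefficients) coincide. The cleanest way to organize this is: (a) prove $M_r(z;0)$ and $M_l(z;0)$ satisfy RH problems whose data are related by the global factorization induced by $S(z)$ at $y=0$, (b) since both RH problems have unique solutions (by Section \ref{sec3}), identify $M_r(z;0) = M_l(z;0)\,\Delta(z)$ with $\Delta$ an explicit meromorphic matrix determined by $a,b$, holomorphic and equal to $I+O(z^{-1})$ only after the residue/jump structure is accounted for, (c) plug $z=0,i,\infty$ into this relation and read off $q_l(0)=q_r(0)$, $\eta_l(0)=\eta_r(0)$, $\zeta_l(0)=\zeta_r(0)$ directly. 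I would present step (b) as the conceptual heart and steps (a), (c) as verification.
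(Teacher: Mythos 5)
Your final plan (a)--(c) is essentially the paper's proof: the paper simply sets $\tilde a(z)=a(z)$ on $\mathbb{C}^+$ and $\tilde a(z)=a^*(z)^{-1}$ on $\mathbb{C}^-$, observes directly from \eqref{ml}--\eqref{mr} that $M_l(z;y)=M_r(z;y)\,\tilde a(z)^{-\sigma_3}$ for \emph{every} $y$ (so your $\Delta$ is just the diagonal matrix $\tilde a^{\sigma_3}$, with no $b$-dependence), and then evaluates at $z=0,i,\infty$ using $a(0)=1$, \eqref{asyMi} and \eqref{asya}. Your worry that the diagonal relation ``is not bounded at the zeros of $a$'' is a red herring --- both $M_l$ and $M_r$ are meromorphic with poles at exactly those points and the identity holds by definition of the two matrices --- so the detours through the scattering relation \eqref{scattering23}, the coefficient $b$, and the correspondence between $y=0$ and $x=0$ are unnecessary.
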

	\begin{proof}
		Denote
		\begin{align}
			\tilde{a}(z)=\left\{ \begin{array}{ll}
				a(z),   &\text{as } z\in \mathbb{C}^+,\\[12pt]
				a^*(z)^{-1} , &\text{as }z\in \mathbb{C}^-,\\
			\end{array}\right. \hspace{0.3cm}	\tilde{a}\to I,\ |z|\to\infty.
		\end{align}
		Then it immediately follows that $M_l(z;y)=M_r(z;y)	\tilde{a}(z)^{-\sigma_3}$. Thus the result  is obtained from  \eqref{asya}, \eqref{asyMi},  $a(0)=1$ and Lemma \ref{lemm3.2}.
	\end{proof}
	
	\subsection{Solvability of the RH problems}
	In this subsection, firstly, we  prove  solvability of  the RH problem \ref{RHP2} about $M_l$ for $y\in\mathbb{R}^+$  under
	the given reflection coefficient $r\in H^{1,1}(\mathbb{R}_k)$.

 We first give the following definition.
	For  any function $f(z) \in L^p(\mathbb{R})$, $1\leq p<\infty$, the Cauchy operator $\mathcal{C}$ is defined  by
	\begin{align}
		\mathcal{C}f(z)=\frac{1}{2\pi i}\int_{\mathbb{R}}\frac{f(s)}{s-z}ds, \ z\in\mathbb{C}\setminus\mathbb{R}.
	\end{align}
	The function $\mathcal{C}f$ is analytic off the real line such that $\mathcal{C}f(\cdot+i\gamma)$ is in $L^p(\mathbb{R})$ for each $0\neq\gamma\in\mathbb{R}$.
	When $z$ approaches to a point on the real line transversely from the upper and lower
	half planes, that is, if $\gamma\to \pm0$ in the following expression, the Cauchy operator $\mathcal{C}$ becomes the  Cauchy projection operator $\mathcal{C}_\pm$:
	\begin{align}
		&\mathcal{C}_\pm f(z)=\lim_{\gamma\to 0}\frac{1}{2\pi i}\int_{\mathbb{R}}\frac{f(s)}{s-(z\pm i\gamma )}ds, \ z\in\mathbb{R}
	\end{align}
	which admits
	\begin{align}
		&(\mathcal{C}_\pm f)^\wedge (z)=\pm X_\pm(z)	 \hat{f}(z),
	\end{align}
	where  $ X_\pm$ denotes the characteristic function on $\mathbb{R}^\pm$.
	
	For any $ f\in L^p(\mathbb{R})$, $1\leq p<\infty$, the Cauchy integral  $\mathcal{C} f$ is analytical  off
	the real line, decays to zero as $|z|\to\infty$, and approaches to $\mathcal{C}_\pm f $ almost everywhere
	when a point $z \in\mathbb{C}^\pm$ approaches to a point on the real axis by any non-tangential contour
	from $\mathbb{C}^\pm$ \cite{book}. If $1< p<\infty$, then there exists a positive constant $C_p$   such that
	\begin{align}
		\parallel \mathcal{C}_\pm f\parallel_{L^p}\leq C_p\parallel f \parallel_{L^p}.
	\end{align}
	Moreover, $C_2=1$. When $f\in L^1(\mathbb{R})$, as $z\to\infty$, $\mathcal{C} f(z)=\mathcal{O}(z^{-1})$.

	%For convenience, $M^{(1)}_l$ is abbreviated as $M^{(1)}$ in this section from the definition of $M^{(1)}$ in \eqref{M1}.
	For $z\in\mathbb{R}$, denote $w=w_++w_-$
	with
	\begin{align}
		&   w_+(z;y)=\left(\begin{array}{cc}
			0  & 0\\
			r(z)e^{-2i\theta(z;y)}	& 0
		\end{array}\right),\ w_-(z;y)=\left(\begin{array}{cc}
			0  & \bar{r}(z)e^{2i\theta(z;y)}\\
			0	& 0
		\end{array}\right).
	\end{align}
	In view of  the Beals-Coifman  theorem in \cite{BC1984}, the  solution of RH problem \ref{RHP2} is given by
	\begin{align}
		M(z;y)=&I+\mathcal{C}(\mu w)(z;y), \label{intM}\ z\in\mathbb{C}
	\end{align}
	and it exists if and only if there exists
	  a solution $\mu(z;y)$ of the Fredholm integral equations
	\begin{align}
		\mu(z;y)=&I+\mathcal{C}_w(I)(z;y)+\mathcal{C}_w(	\mu-I)(z;y),\label{mu}
	\end{align}
	with $$\mathcal{C}_wf(z;y)=\mathcal{C}_+(fw_-)(z;y)+\mathcal{C}_-(fw_+)(z;y).$$ And  a simple calculation gives that $\|\mathcal{C}_wI\|_{2}\leq C(\|r\|_2). $
	We rewrite \eqref{mu} as
	\begin{align}
		\mu(z;y)-I=\mathcal{K}(I)(z;y)+\mathcal{K}(\mu-I)(z;y),\ z\in\mathbb{R},
	\end{align}
	where $\mathcal{K}$ is a operator on $L^2(\mathbb{R})$ such that for any matrix function $f=(f_1,f_2)\in L^2(\mathbb{R})$,
	\begin{align}
		&\mathcal{K}(f)=(\mathcal{A}(f_2), \mathcal{B}(f_1)),\\
		&\mathcal{A}(f_2)=\mathcal{C}_-(re^{-2i\theta}f_2),\hspace{0.5cm}\mathcal{B}(f_1)(z;y)=\mathcal{C}_+(\bar{r}e^{2i\theta}f_1).
	\end{align}
	$\mathcal{K}$  can also be  a operator on the row function in $L^2(\mathbb{R})$.
	The following proposition is given to show the invertibility of $I-\mathcal{K}$.
	\begin{Proposition}\label{lemmaG}
		Assume that $r\in H^s(\mathbb{R})$, $s>1/2$.  Then   $I-\mathcal{K}$ is a bounded Fredholm operator: $ L^2(\mathbb{R})\to  L^2(\mathbb{R})$ for $y\in\mathbb{R}^+$ with index zero. Moreover, $(I-\mathcal{K})^{-1}$ exists and is also a bounded linear operator: $ L^2(\mathbb{R})\to  L^2(\mathbb{R})$. And     there exists a constant $C$ such that
		\begin{align}
			\parallel\left( I-\mathcal{K}\right) ^{-1}\parallel\leq C.
		\end{align}
	\end{Proposition}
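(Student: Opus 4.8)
The plan is to write $\mathcal{K}=\mathcal{K}_{c}+\mathcal{K}_{d}$, where $\mathcal{K}_{c}(f)=(\mathcal{A}_{1}f_{2},\mathcal{B}_{1}f_{1})$ collects the Cauchy–projection pieces and $\mathcal{K}_{d}(f)=(\mathcal{A}_{2}f_{2},\mathcal{B}_{2}f_{1})$ collects the finite sums over the discrete spectrum, and to treat the two parts separately. Boundedness of $I-\mathcal{K}$ on $L^{2}(\mathbb{R})$ is the elementary part: by the Sobolev embedding $H^{s}(\mathbb{R})\hookrightarrow L^{\infty}(\mathbb{R})$ for $s>1/2$ one has $r,\bar{r}\in L^{\infty}(\mathbb{R})$, and since $|e^{\pm 2i\theta(z;y)}|=1$ for $z\in\mathbb{R}$, multiplication by $re^{-2i\theta}$ or $\bar{r}e^{2i\theta}$ is bounded on $L^{2}(\mathbb{R})$ with norm $\|r\|_{\infty}$; composing with $\mathcal{C}_{\mp}$ (which has $L^{2}$-operator norm $C_{2}=1$) yields $\|\mathcal{A}_{1}\|,\|\mathcal{B}_{1}\|\leq\|r\|_{\infty}\lesssim\|r\|_{H^{s}}$. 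For $\mathcal{K}_{d}$ each summand in $\mathcal{A}_{2}$ or $\mathcal{B}_{2}$ is a fixed element of $L^{2}(\mathbb{R})$, namely $(z-z_{j})^{-1}$ or $(z-\bar{z}_{j})^{-1}$, multiplied by the scalar $\mathcal{C}(f_{2})(z_{j};y)$ or $\mathcal{C}(f_{1})(\bar{z}_{j};y)$, which are bounded linear functionals of $f$ because $(\cdot-z_{j})^{-1}\in L^{2}(\mathbb{R})$ when $z_{j}\notin\mathbb{R}$; hence $\mathcal{K}_{d}$ has rank at most $2N_{0}$, is compact, and $\|\mathcal{K}_{d}\|$ is controlled by $\sum_{j}|c_{j}|$ times the bounded exponential factors.

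Next I would show that $I-\mathcal{K}_{c}$ is boundedly invertible on $L^{2}(\mathbb{R})$ with an operator-norm bound depending only on $\|r\|_{\infty}$. Identifying $\mathcal{K}_{c}$ with the Beals–Coifman operator $\mathcal{C}_{w}$ attached to the jump factorization $V_{l}=(I-w_{-})^{-1}(I+w_{+})$ of RHP \ref{RHP1}, the structural input is that on $\mathbb{R}$ the matrix $V_{l}$ of \eqref{jumpv} is Hermitian and positive definite with $V_{l}\geq I$ (the dependence on $y$ sits only in the unimodular factor $e^{\pm 2i\theta(z;y)}$). Zhou's vanishing lemma — the uniqueness argument built on $H(z)=M(z)\,\overline{M(\bar{z})}^{T}$ — then shows that $(I-\mathcal{C}_{w})g=0$ forces $g=0$, and the Fredholm alternative for $I-\mathcal{C}_{w}$ on $L^{2}$ upgrades this to invertibility with $\|(I-\mathcal{C}_{w})^{-1}\|\leq C(\|r\|_{\infty})$. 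Consequently $I-\mathcal{K}=(I-\mathcal{K}_{c})-\mathcal{K}_{d}$ is a compact perturbation of an invertible operator, hence a bounded Fredholm operator of index zero.

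It then remains to prove injectivity of $I-\mathcal{K}$, which I would obtain from the vanishing-lemma argument for the \emph{full} RHP \ref{RHP1} (poles included). If $(I-\mathcal{K})\nu=0$, form the matrix $M(z)$ from the Cauchy-type representation \eqref{intM} with $\mu=\nu+I$ and the constant term $I$ removed; then $M$ is analytic off $\mathbb{R}$, $M(z)\to 0$ as $z\to\infty$, $M$ carries the prescribed simple poles with residues \eqref{RES1}–\eqref{RES2}, and $M_{+}=M_{-}V_{l}$ on $\mathbb{R}$. Putting $H(z)=M(z)\,\overline{M(\bar{z})}^{T}$ for $\mathrm{Im}\,z>0$, the triangular form of the residue matrices together with the $z_{j}\leftrightarrow\bar{z}_{j}$ pairing and the conjugate norming constants makes the poles of $M(z)$ cancel against those of $\overline{M(\bar{z})}^{T}$, so $H$ continues to an entire function vanishing at infinity; since $H_{+}+H_{+}^{\dagger}=2M_{-}V_{l}M_{-}^{\dagger}\geq 0$ with $V_{l}>0$, Cauchy's theorem forces $H\equiv 0$, hence $M\equiv 0$ and $\nu\equiv 0$. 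Being injective and Fredholm of index zero, $I-\mathcal{K}$ is invertible, and the open mapping theorem gives that $(I-\mathcal{K})^{-1}$ is a bounded operator on $L^{2}(\mathbb{R})$. For the uniform bound $\|(I-\mathcal{K})^{-1}\|\leq C$ on $y\in[0,+\infty)$ I would note that every constant above depends only on $\|r\|_{H^{s}}$ and the fixed discrete data $\{z_{j},c_{j}\}$: the parameter $y\geq 0$ enters only through the unimodular factors $e^{\pm 2i\theta(z;y)}$, $z\in\mathbb{R}$, and through $e^{\mp 2i\theta(z_{j};y)}$ with $|e^{\mp 2i\theta(z_{j};y)}|=e^{-\frac{y}{2}\mathrm{Im}(z_{j}-z_{j}^{-1})}\leq 1$; combining the $y$-uniform estimate for $(I-\mathcal{K}_{c})^{-1}$ with the finite-dimensional (determinant) estimate for the rank-$\leq 2N_{0}$ correction $\mathcal{K}_{d}$ then produces a single constant $C$ valid for all $y\geq 0$.

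The step I expect to be the main obstacle is the vanishing lemma: making the pole cancellation in $H(z)=M(z)\,\overline{M(\bar{z})}^{T}$ go through requires the precise symmetry of the residue conditions (the $z_{j}\leftrightarrow\bar{z}_{j}$ pairing, the conjugate norming constants, and the Schwarz symmetry of $\mu_\pm$) in tandem with the positivity $V_{l}+V_{l}^{\dagger}=2V_{l}>0$; the boundedness estimates, the Fredholm-index count, and the uniformity in $y$ are essentially bookkeeping once that is in place.
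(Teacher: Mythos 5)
Your architecture (continuous part plus a finite-rank discrete part, a vanishing lemma for injectivity, Fredholm index zero, then a $y$-uniform bound) matches the paper's in spirit, and your boundedness and compactness observations for $\mathcal{A}_2,\mathcal{B}_2$ are fine. But there are two genuine gaps. First, you never establish that $I-\mathcal{K}_c$ (equivalently $I-\mathcal{C}_w$) is Fredholm: $\mathcal{K}_c$ is not compact, so "the Fredholm alternative" cannot simply be invoked, and your argument for invertibility of $I-\mathcal{K}_c$ is circular without it. This is precisely where the hypothesis $r\in H^s$, $s>1/2$, does work beyond the embedding into $L^\infty$: the paper proves that $\mathcal{K}^2$ is compact by writing $\mathcal{A}_1\mathcal{B}_1=\mathcal{C}_-(\mathcal{C}_+(\cdot\, w_-)w_+)$ as an integral operator whose kernel, after Fourier transform, is bounded in $L^2(\mathbb{R}\otimes\mathbb{R})$ by $\|(\bar r e^{-2\theta})^\wedge\|_{L^{2,s}}\|(re^{2\theta})^\wedge\|_{L^{2,s}}\lesssim\|r\|_{H^s}^2$, i.e.\ is Hilbert--Schmidt; then $I-\mathcal{K}^2$ is Fredholm of index zero and so is $I-\mathcal{K}$. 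Some such step (or a parametrix construction \`a la Zhou) must be supplied.

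Second, the uniform bound $\|(I-\mathcal{K})^{-1}\|\leq C$ for all $y\in[0,+\infty)$ does not follow from what you wrote. The Fredholm alternative yields invertibility for each fixed $y$ but no quantitative control, your claimed bound $\|(I-\mathcal{C}_w)^{-1}\|\leq C(\|r\|_\infty)$ is asserted rather than proved, and treating $\mathcal{K}_d$ as a finite-rank perturbation leads to a finite-dimensional determinant that you would still have to bound away from zero uniformly in $y$ (a finite-rank perturbation of an invertible operator need not be invertible). The paper closes this by a direct a priori estimate: for arbitrary $h$ it sets $f^{(\pm)}=(I-\mathcal{K})^{-1}(\mathcal{C}_\mp h_{11},\mathcal{C}_\pm h_{12})$, builds the associated sectionally analytic functions (including the discrete-spectrum terms), and runs the same Cauchy--Goursat/positivity computation used for injectivity to get $\|f^{(\pm)}\|_2\leq\|h\|_2$, hence $\|(I-\mathcal{K})^{-1}\|\leq 2$ independently of $y$. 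Your scalar, row-wise version of the vanishing lemma would adapt to this, but as written the quantitative conclusion is missing. (Minor point: the paper's vanishing argument works row by row with the scalar functions $g_{11}=\mathcal{C}(re^{-2\theta}f_{12})$, $g_{12}=\mathcal{C}(\bar re^{2\theta}f_{11})$ rather than with $H(z)=M(z)\overline{M(\bar z)}^T$; the two are equivalent here, but the row-wise form is what makes the subsequent resolvent estimate clean.)
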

	\begin{proof}
		We choose the first row of \eqref{mu} for example to give the proof and denote it as $(\mu_{11}-1,\mu_{12})$. Recall  the definition of $\theta$ in \eqref{theta}. Then
		\begin{align}
			|e^{-2i\theta(z_j)}|=e^{-\frac{y}{2}(1+1/|z_j|^2)\text{Im}z_j}=|e^{2i\theta(\bar{z}_j)}|.
		\end{align}
		For any $f=(f_1,f_2)\in L^2(\mathbb{R})$,
		\begin{align*}
			\mathcal{K}^2f=\left(\mathcal{A}\mathcal{B}f_1, \mathcal{B}\mathcal{A}f_2 \right) .
		\end{align*}
		Taking $\mathcal{A}\mathcal{B}$ as an example, it is found that
		\begin{align*}
			(\mathcal{A} \mathcal{B} (f_1^\vee))^\wedge (z)={\mathcal{C}_-(\mathcal{C}_+(f^\vee w_-)w_+)}^\wedge (z) =\int_{\mathbb{R}}K(z,\lambda;y)
			f_1(\lambda)d\lambda,
		\end{align*}
		where $K(z,\lambda;y)$ is a scalar function with
		\begin{align*}
			K(z,\lambda;y)=-X_-(z)\int_{\mathbb{R}}X_+(s)	{(\bar{r}e^{-2\theta(\cdot;y)})}^\wedge(z-s)(re^{2\theta(\cdot;y)})^\wedge(s-\lambda)ds.
		\end{align*}
		Since   $$\|K(\cdot,\cdot; y)\|_{L^2(\mathbb{R}\otimes\mathbb{R})}\leq \|(\bar{r}e^{-2\theta(\cdot;y)})^\wedge\|_{{2,s}}\|(re^{2\theta(\cdot;y)})^\wedge\|_{{2,s}}\leq C(y)\|r\|_{H^s}^2$$ for a positive constant $C(y)$ which is finite for every $y\in\mathbb{R}^+$, it follows that $\mathcal{A} \mathcal{B} =\mathcal{C}_-(\mathcal{C}_+(\cdot w_-)w_+)$ is the Hilbert-Schmidt operator. It in turn implies that $\mathcal{K}^2$ is compact. Therefore, %it is inferred in  \cite{simon} that
		$I-\mathcal{K}^2$ is a Fredholm operator with index zero, and so is $I-\mathcal{K}$. Therefore, to prove invertibility of $I-\mathcal{K}$, it is sufficient to prove that $I-\mathcal{K}$ is injection. If there exists a matrix function $f\in L^2(\mathbb{R})$ such that $(I-\mathcal{K})f=0$, we denote the first row of  $f$ as $(f_{11},f_{12})$. Define
		\begin{align}
			g_{11}=\mathcal{C}(re^{-2\theta}f_{12}),\ g_{12}=\mathcal{C}(\bar{r}e^{2\theta}f_{11}).
		\end{align}
		Thus, it appears that for $z\in\mathbb{R}$,
		\begin{align*}
			&[g_{11}]_-(z)=f_{11}(z), \hspace{2.4cm}[g_{11}]_+(z)=f_{11}(z)+re^{-2\theta}f_{12}(z),\\
			&[g_{12}]_-(z)=f_{12}(z)+\bar{r}e^{2\theta}f_{11}(z),\ \ [g_{12}]_+(z)=f_{12}(z).
		\end{align*}
		And the function $g_{11}g_{11}^*+g_{12}g_{12}^*$ is a analytic function on $\mathbb{C}^+$. Using the result in the Cauchy-Goursat theorem gives that
		\begin{align*}
			0=\int_{\mathbb{R}_+}g_{11}(z)g_{11}^*(z)+g_{12}(z)g_{12}^*(z)dz,
		\end{align*}
		where $\mathbb{R}_+$ is used to denote $\mathbb{R}$ when it is viewed as the boundary of $\mathbb{C}^+$.
		On the other hand
		\begin{align*}
			0=\int_{\mathbb{R}^+}g_{11}(z)g_{11}^*(z)+g_{12}(z)g_{12}^*(z)dz&=\int_{\mathbb{R}}|f_{11}(z)|^2+|f_{12}(z)|^2dz,
		\end{align*}
		which implies that   $\|f_{11}\|_2=\|f_{12}\|_2=0$. And the second row of $f$ can obtain the same result, which finally implies that $f=0$, $a.e.$ on $\mathbb{R}$.
		
		The last step is to prove that  $\|(I-\mathcal{K})^{-1}\|$ is uniformly bounded for $y\in\mathbb{R}^+$.  Similarly, we only give the estimate of the  row function.
		For any matrix function $h=(h_{11},h_{12})\in L^2(\mathbb{R})$, denote
		\begin{align*}
			&f^{(+)}=(f^{(+)}_1,f^{(+)}_2)=(I-\mathcal{K})^{-1}(\mathcal{C}_-h_{11},\mathcal{C}_+h_{12}),\\
			&f^{(-)}=(f^{(-)}_1,f^{(-)}_2)=(I-\mathcal{K})^{-1}(\mathcal{C}_+h_{11},\mathcal{C}_-h_{12}).
		\end{align*}
		Then it holds that $f^{(+)}-f^{(-)}=(I-\mathcal{K})^{-1}(-h_{11},h_{12})$.
		And denote the analytic functions on $\mathbb{C}\setminus\mathbb{R}$ with
		\begin{align}
			g_{11}^{(\pm)}=\mathcal{C}(re^{-2\theta}f^{(\pm)}_{2}),\hspace{0.5cm}g_{12}^{(\pm)}=\mathcal{C}(\bar{r}e^{2\theta}f_{1}^{(\pm)}).
		\end{align}
		For $f^{(+)}$, the difference is that in this case, for $z\in\mathbb{R}$,
		\begin{align*}
			&[g^{({+})}_{11}]_-(z)=f^{(+)}_{1}(z)-\mathcal{C}_-h_{11}(z),\\ &[g^{(+)}_{11}]_+(z)=f^{(+)}_{1}(z)-\mathcal{C}_-h_{11}(z)+re^{-2\theta}f^{(+)}_{2}(z),\\
			&[g^{(+)}_{12}]_-(z)=f^{(+)}_{2}(z)-\mathcal{C}_+h_{12}(z)-\bar{r}e^{2\theta}f^{(+)}_{1}(z),\\ &[g^{(+)}_{12}]_+(z)=f^{(+)}_{2}(z)-\mathcal{C}_+h_{12}(z).
		\end{align*}
		Performing the same manipulations as above, we obtain that
		\begin{align*}
			0&=\int_{\mathbb{R}_+}g^{(+)}_{11}(z)(g^{(+)}_{11}+\mathcal{C}h_{11})^*(z)+(g^{(+)}_{12}+\mathcal{C}h_{12})(z)(g^{(+)}_{12})^*(z)dz\\
			&=\int_{\mathbb{R}}|f^{(+)}_{1}(z)|^2+|f^{(+)}_{2}(z)|^2-(f^{(+)}_{2}(z)\overline{\mathcal{C}_+h_{2}(z)}+\mathcal{C}_-h_{1}(z)\overline{f^{(+)}_{1}(z)})dz,
		\end{align*}
		which leads to that
		\begin{align*}
			\|f^{(+)}\|_2^2&=\int_{\mathbb{R}}|f^{(+)}_{1}(z)|^2+|f^{(+)}_{2}(z)|^2dz=\big|\int_{\mathbb{R}}f^{(+)}_{2}(z)\overline{\mathcal{C}_+h_{2}(z)}+\mathcal{C}_-h_{1}(z)\overline{f^{(+)}_{1}(z)}dz\big|\leq \|f^{(+)}\|_2\|h\|_2.
		\end{align*}
		Therefore, it is concluded  that $	\|f^{(+)}\|_2\leq \|h\|_2$. For $f^{(-)}$, note that for $z\in\mathbb{R}$,
		\begin{align*}
			[g^{({-})}_{11}]_-(z)=f^{(-)}_{1}(z)-\mathcal{C}_+h_{11}(z),
		\end{align*}
		and for $z\in\mathbb{C}\setminus\mathbb{R}$,
		\begin{align*}
			\mathcal{C}([g^{({-})}_{11}]_-)(z)=\left\{ \begin{array}{ll}
				0,   & z\in \mathbb{C}^+,\\[12pt]
				g^{({-})}_{11} , &z\in \mathbb{C}^-,\\
			\end{array}\right.=\mathcal{C}(f^{(-)}_{1}-\mathcal{C}_+h_{11})(z),
		\end{align*}
		which implies that $\mathcal{C}_-(f^{(-)}_{1})(z)\equiv0$ and $f^{(-)}_{1}(z)=\mathcal{C}_+h_{11}(z)$. Thus, $g^{({-})}_{11}\equiv0$. Similarly, it also follows that  $f^{(-)}_{2}(z)=\mathcal{C}_-h_{12}(z)$. Then we arrive at that $\|f^{(-)}\|_2\leq \|h\|_2$. Therefore, it  transpires that
		\begin{align*}
			\|	f^{(+)}-f^{(-)}\|_2=\|(I-\mathcal{K})^{-1}(-h_{11},h_{12})\|_2\leq 2 \|h\|_2,
		\end{align*}
		which in turn implies  $\|(I-\mathcal{K})^{-1}\|\leq 2 $. Thus we arrive at the desired result of Proposition \ref{lemmaG}.
	\end{proof}
	As a consequence of this proposition, the solution of \eqref{mu} exists with
	\begin{align}
		\mu(z;y)=I+(I-\mathcal{K})^{-1}(\mathcal{K}(I))(z;y),
	\end{align}
	and
	\begin{align}
		\|\mu(z;y)-I\|_2\leq 2 \|\mathcal{K}(I)\|_2\leq 2\|r\|_2.\label{muL2}
	\end{align}
	Therefore, it follows from \eqref{intM} that
	\begin{align}
		\|M(z;y)-I\|_2=\|\mathcal{C}( w)(z;y)+\mathcal{C}((\mu-I) w)(z;y)\|\leq (2\|r\|_\infty+1)\|r\|_2,
	\end{align}
	When $y\in\mathbb{R}^-$, following the same process as $M_l$, $M_r$ admits similar property shown in  proposition below.
	\begin{Proposition}\label{Pro3.2}
		If    $  \tilde{r}\in  H^{s}(\mathbb{R})$, $s>1/2$, $y\in\mathbb{R}^-$, then  there  exists a unique $M_r$  with the estimate
		\begin{align}
			&\parallel M_r-I\parallel_{2}\leq \|\tilde{r}\|_2(2\|\tilde{r}\|_\infty+1).
		\end{align}
	\end{Proposition}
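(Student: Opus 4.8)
The proof of Proposition \ref{Pro3.2} follows by repeating verbatim the argument given for $M_l$ in the previous subsection, with the roles of the two rows/columns and of $r$ versus $\tilde r$ interchanged. Concretely, I would first record the Beals--Coifman representation for $M_r$: for $z\in\mathbb{R}$ set
\begin{align}
	\tilde w_+(z;y)=\left(\begin{array}{cc}
		0 & \bar{\tilde r}(z)e^{2i\theta(z;y)}\\
		0 & 0
	\end{array}\right),\quad
	\tilde w_-(z;y)=\left(\begin{array}{cc}
		0 & 0\\
		\tilde r(z)e^{-2i\theta(z;y)} & 0
	\end{array}\right),
\end{align}
so that the jump matrix $V_r$ factorizes as $(I-\tilde w_-)^{-1}(I+\tilde w_+)$. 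Then RHP \ref{RHP2} is solved by
\begin{align}
	M_r(z;y)=I+\mathcal{C}(\mu\tilde w)(z;y)+\sum_{j=1}^{N_0}\left(\frac{\tilde c_je^{2i\theta(z_j)}}{z-z_j}\mu_1(z_j;y),\ \frac{-\bar{\tilde c}_je^{-2i\theta(\bar z_j)}}{z-\bar z_j}\mu_2(\bar z_j;y)\right),
\end{align}
provided the associated Fredholm equation $\mu-I=\tilde{\mathcal{K}}(I)+\tilde{\mathcal{K}}(\mu-I)$ on $L^2(\mathbb{R})$ is solvable, where $\tilde{\mathcal{K}}$ has exactly the structure of $\mathcal{K}$ with the Cauchy projectors $\mathcal{C}_\pm$ swapped and $r$ replaced by $\tilde r$.

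Next I would establish the invertibility of $I-\tilde{\mathcal{K}}$ and the uniform bound $\|(I-\tilde{\mathcal{K}})^{-1}\|\leq 2$ for $y\in(-\infty,0]$. The compactness of $\tilde{\mathcal{K}}^2$ is obtained exactly as in Proposition \ref{lemmaG}: the discrete parts $\mathcal{A}_2,\mathcal{B}_2$ are finite rank, and the continuous cross term $\mathcal{C}_+(\mathcal{C}_-(\cdot\,\tilde w_+)\tilde w_-)$ is Hilbert--Schmidt with kernel norm controlled by $C(y)\|\tilde r\|_{H^s}^2$, where now $C(y)$ is finite for every $y\le 0$ (the sign of the exponent in $e^{\pm2i\theta}$ that makes the Fourier estimate work is flipped precisely because $y\le0$). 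Injectivity is the same Cauchy--Goursat / Plemelj argument: if $(I-\tilde{\mathcal{K}})f=0$, build $g_{11}=\mathcal{C}(\tilde r e^{-2\theta}f_{12})$, $g_{12}=\mathcal{C}(\bar{\tilde r}e^{2\theta}f_{11})$, observe that $g_{11}g_{11}^*+g_{12}g_{12}^*$ is analytic in $\mathbb{C}^+$ (again the half-plane is chosen so the exponentials decay), integrate over $\mathbb{R}$ viewed as $\partial\mathbb{C}^+$, and conclude $\|f_{11}\|_2=\|f_{12}\|_2=0$, and likewise for the second row. The uniform bound then comes from the same splitting $f^{(\pm)}=(I-\tilde{\mathcal{K}})^{-1}(\mathcal{C}_\mp h_{11},\mathcal{C}_\pm h_{12})$, showing $g^{(-)}_{11}=g^{(-)}_{12}=0$ and $\|f^{(+)}\|_2\le\|h\|_2$, $\|f^{(-)}\|_2\le\|h\|_2$, hence $\|(I-\tilde{\mathcal{K}})^{-1}\|\le2$.

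Finally the estimate follows mechanically: $\mu-I=(I-\tilde{\mathcal{K}})^{-1}\tilde{\mathcal{K}}(I)$ gives $\|\mu-I\|_2\le 2\|\tilde{\mathcal{K}}(I)\|_2\le2\|\tilde r\|_2$ (using $\|\mathcal{C}_\pm\|_{L^2\to L^2}=1$), and then
\begin{align*}
	\|M_r-I\|_2=\|\mathcal{C}(\tilde w)+\mathcal{C}((\mu-I)\tilde w)\|_2\le\|\tilde r\|_2+\|\mu-I\|_2\|\tilde r\|_\infty\le\|\tilde r\|_2(2\|\tilde r\|_\infty+1),
\end{align*}
which is the asserted bound. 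I do not expect any genuine obstacle here; the only point requiring a moment's care is checking that all the sign/half-plane choices in the compactness, injectivity, and uniform-bound steps are consistent with $y\in(-\infty,0]$ (rather than $y\in[0,+\infty)$ as for $M_l$), which is exactly why RHP \ref{RHP2} pairs with the $y\le0$ regime in the definition \eqref{M1}. So the entire proof amounts to: state the Beals--Coifman formula, invoke the $M_l$ argument mutatis mutandis for solvability and the bound on $(I-\tilde{\mathcal{K}})^{-1}$, and read off the final inequality.
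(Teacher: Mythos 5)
Your proposal is correct and takes exactly the route the paper intends: the paper gives no separate proof of Proposition \ref{Pro3.2}, stating only that it follows ``following the same process as $M_l$,'' and your mutatis mutandis repetition of the Proposition \ref{lemmaG} argument (with $r\to\tilde r$, the triangular factors swapped, and the half-plane/sign choices adjusted for $y\le 0$) together with the final chain $\|\mu-I\|_2\le 2\|\tilde r\|_2$ and $\|M_r-I\|_2\le\|\tilde r\|_2+\|\mu-I\|_2\|\tilde r\|_\infty$ is precisely that process. In fact you supply more detail than the paper does, correctly identifying the one point needing care (the consistency of the exponential decay with $y\in(-\infty,0]$).
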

	%\begin{lemma}\label{lemmaextence}
	%	For any scattering data in $H^{s}(\mathbb{R}_k)$, the solution of the RH problems in RHP \ref{RHP1} and \ref{RHP2} exist unique respectively.
	%\end{lemma}
	\begin{remark}
		When eigenvalues exist and are simple,  there  also exists a unique solution for these two RH problems, respectively.
		Denote the solution of RHP \ref{RHP1} with  eigenvalues by $M$. It is inferred that $|M-I|$ is controlled by $\|r\|_{H^{s}}$ and the absolute value of eigenvalues.
		But unlike the case in \cite{YFL}, the location of eigenvalue in our case cannot be controlled by $\|m_0\|_{H^{1,1}}$. %So it is not easy to control $|M-I|$  via initial value, which will bring difficulty in the estimates of the solution latter.
		Then it is not easy to control $|M-I|$  via initial value $m_0$, which will bring difficulty in the estimates of the solution of \eqref{mch}.
		%To avoid this difficulty in the estimates of the solution, we use small norm assumption to  the existence of eigenvalue.
		It is necessary to use small norm assumption to exclude the existence of eigenvalue.
	\end{remark}
	\subsection{Estimates on the solution of RH problems}
	The aim of this subsection is to give some basic estimates on the solution of RH problems.
	Consider the $y$-derivative at both sides of the equation \eqref{mu}
	\begin{align}
		\partial_y\mu(z;y)=(\partial_y\mathcal{K})(I)(z;y)+(\partial_y\mathcal{K})(	\mu-I)(z;y)+\mathcal{K}	(\partial_y\mu)(z;y),\label{muy}
	\end{align}
	where for any $f\in L^2$
	\begin{align*}
		(\partial_y\mathcal{K})f(z;y)=&\mathcal{C}_+(f\partial_yw_-)(z;y)+\mathcal{C}_-(f\partial_yw_+)(z;y)\\
		=&\frac{1}{2}\mathcal{C}_+(ik(\cdot)fw_-)(z;y)+\frac{1}{2}\mathcal{C}_-(ik(\cdot)f\partial_yw_+)(z;y).
	\end{align*}
	It holds that
	\begin{align*}
		\|(\partial_y\mathcal{K})(I)\|_{2}\leq \|k(\cdot)r\|_{{2}},\hspace{0.5cm}\|(\partial_y\mathcal{K})(	\mu-I)\|\leq\|\mu-I\|_2\|\|k(\cdot)r\|_{{\infty}}.
	\end{align*}
	It is thereby inferred that $	\partial_y\mu$ exists with
	$\partial_y\mu=(I-\mathcal{K})^{-1}((\partial_y\mathcal{K})(I)+(\partial_y\mathcal{K})(	\mu-I))$, and
	\begin{align}
		\|	\partial_y\mu\|_2\leq& \|\mathcal{C}_+(k(\cdot)\bar{r}e^{2\theta})\|_2+\|\mathcal{C}_-(k(\cdot)re^{-2\theta})\|_2\nonumber\\
		&+4(\|\mathcal{C}_+(\bar{r}e^{2\theta})\|_2+\|\mathcal{C}_-(re^{-2\theta})\|_2)\|k(\cdot)r\|_{{\infty}}\label{muyL2}
	\end{align}
	Therefore, we arrive at the $y$-derivative  of $M(z;y)$  with
	\begin{align}
		&\partial_y	M(z;y)=\mathcal{C}((\partial_y\mu) w)(z;y)+\frac{i}{2}\mathcal{C}(k(\cdot)\mu w)(z;y).
	\end{align}
	
	Next, we provide  estimates of $M$ and its $y$-derivative  at $z= i$ and $z=0$, which will be used  latter. Note that $M$ has no jump at $z=0$. Then $M(0)$ is well-defined.  $M$ is analytic at $z=i$ with expansion
	\begin{align}
		M(z)=M(i)+M^{i,(1)}(z-i)+\mathcal{O}((z-i)^2).
	\end{align}
	The following proposition shows that $M(i)$ and $M(0)$ are uniformly bounded for $y\in\mathbb{R}^+$.
	\begin{Proposition}\label{proMi}
		If $r\in H^{1,1}(\mathbb{R}_k)$ satisfies $\|\langle \cdot\rangle r\|_{L^2(\mathbb{R}_k)\cap L^\infty(\mathbb{R}_k)}\leq \rho_0$ for some $\rho_0>0$, then there exists a positive constant $C(\rho_0)$ such that
		\begin{align*}
			&|M(i)-I|,\ |M(0)-I|\leq C(\rho_0)\|r\|_{L^2(\mathbb{R}_k)\cap L^\infty(\mathbb{R}_k)},\\
			&|\partial_yM(i)|,\  |\partial_yM^{i,(1)}(y)|\leq C(\rho_0)\|(\cdot)r\|_{L^2(\mathbb{R}_k)\cap L^\infty(\mathbb{R}_k)}.
		\end{align*}
	\end{Proposition}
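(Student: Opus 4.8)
The plan is to estimate $M(z;y)$ and $\partial_y M(z;y)$ at the fixed points $z=i$ and $z=0$ by inserting these points into the Cauchy-operator representation \eqref{intM} and its $y$-derivative, and then bounding each term using the uniform resolvent bound $\|(I-\mathcal{K})^{-1}\|\leq 2$ from Proposition \ref{lemmaG}, the $L^2$ estimates \eqref{muL2} and \eqref{muyL2} on $\mu-I$ and $\partial_y\mu$, and the mapping properties of the Cauchy operator from Subsection \ref{sec4}. Since $z=i$ and $z=0$ both lie strictly off the real line (the jump contour), the kernels $\tfrac{1}{s-z}$ appearing in $\mathcal{C}(\mu w)(z;y)$ are smooth and bounded in $s\in\mathbb{R}$ for $z\in\{i,0\}$, so each Cauchy integral is controlled by a Cauchy--Schwarz estimate of the form $|\mathcal{C}(fw)(z)|\lesssim \|f\|_{2}\,\|r\|_{2}$, with implied constant depending only on $\mathrm{dist}(z,\mathbb{R})=1$; the discrete-spectrum sum in \eqref{intM} contributes finitely many bounded rational terms whose coefficients are controlled by the norming constants $c_j$ (which, in the small-norm regime relevant to Theorem \ref{last}, are absent, but in general are fixed constants).

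Concretely, first I would write, for $z\in\{i,0\}$,
\begin{align*}
M(z;y)-I=\mathcal{C}(w)(z;y)+\mathcal{C}((\mu-I)w)(z;y)+\big(\text{discrete terms}\big),
\end{align*}
and estimate $|\mathcal{C}(w)(z;y)|\leq \tfrac{1}{2\pi}\,\|r\|_{2}\,\big\|\tfrac{1}{s-z}\big\|_{2}\lesssim \|r\|_{2}$ and $|\mathcal{C}((\mu-I)w)(z;y)|\lesssim \|\mu-I\|_{2}\,\|r\|_{\infty}\lesssim \|r\|_{2}\|r\|_{\infty}$ via \eqref{muL2}; combined with the equivalence of the $L^2$ and $L^\infty$ norms in the $z$- and $k$-variables (the change of variables in the proof of Lemma \ref{lemmar1}), this yields $|M(i)-I|,\,|M(0)-I|\leq C(\rho_0)\|r\|_{L^2(\mathbb{R}_k)\cap L^\infty(\mathbb{R}_k)}$. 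For the $y$-derivative I would differentiate \eqref{intM} (equivalently use the displayed formula for $\partial_y M(z;y)$ just above the proposition), obtaining
\begin{align*}
\partial_y M(z;y)=\mathcal{C}((\partial_y\mu)w)(z;y)+\tfrac{i}{2}\mathcal{C}(k(\cdot)\mu w)(z;y)+\big(\text{differentiated discrete terms}\big),
\end{align*}
and bound the first term by $\|\partial_y\mu\|_{2}\,\|r\|_{\infty}$ using \eqref{muyL2}, and the second by $\|k(\cdot)r\|_{2}+\|\mu-I\|_{2}\|k(\cdot)r\|_{\infty}\lesssim \|(\cdot)r\|_{L^2\cap L^\infty}$, recalling $k(z)=z-1/z$ so that $|k|$ is comparable to $\langle z\rangle$ in the relevant variable. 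The analogous bound on $\partial_y M^{i,(1)}(y)$ (the $\mathcal{O}(z-i)$ coefficient in the expansion near $z=i$) follows by the same argument applied to the divided difference $(M(z;y)-M(i;y))/(z-i)$ at a second nearby point, or by a Cauchy-integral formula on a small circle around $z=i$, using that the kernel and its derivatives remain bounded there.

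The main obstacle I anticipate is \emph{uniformity in} $y\in[0,+\infty)$ of all the above bounds. The exponential factors $e^{\pm 2i\theta(z;y)}$ on the contour $\mathbb{R}$ have modulus $1$, so $w_\pm$ is bounded in $y$; but the discrete-spectrum terms carry factors $e^{-2i\theta(z_j;y)}$ with $|e^{-2i\theta(z_j;y)}|=e^{-\frac{y}{2}(1+|z_j|^{-2})\mathrm{Im}\,z_j}$, which is in fact \emph{decaying} in $y$ (as already noted in the proof of Proposition \ref{lemmaG}), hence harmless. The genuinely delicate point is that the constant $C(y)$ appearing in the Hilbert--Schmidt bound in Proposition \ref{lemmaG} was only asserted finite for each $y$; however, the cleaner uniform bound $\|(I-\mathcal{K})^{-1}\|\leq 2$ established at the end of that proof (via the Cauchy--Goursat/positivity argument, which does not see $\theta$ at all) is exactly what makes the estimates here uniform, so I would invoke that rather than the Fredholm argument. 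With the resolvent bound uniform, \eqref{muL2} and \eqref{muyL2} are uniform in $y$, and since the Cauchy-kernel estimates at $z=i,0$ depend only on the fixed distance $1$ to the real axis, the whole chain of bounds is uniform in $y\in\mathbb{R}^+$, completing the proof.
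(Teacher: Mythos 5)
Your treatment of $z=i$ and of the $y$-derivatives is exactly the paper's argument: insert the point into the representation \eqref{intM} (respectively the displayed formula for $\partial_y M$), split off $\mathcal{C}(w)$, and apply Cauchy--Schwarz together with \eqref{muL2}, \eqref{muyL2} and the uniform resolvent bound $\|(I-\mathcal{K})^{-1}\|\le 2$; your observation that uniformity in $y$ rests on that resolvent bound rather than on the $y$-dependent Hilbert--Schmidt constant is also the correct reading of Proposition \ref{lemmaG}.

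There is, however, a concrete error in your handling of $M(0)$. You assert that $z=0$ lies strictly off the jump contour and that the kernel $1/(s-z)$ is bounded with constant depending on $\mathrm{dist}(z,\mathbb{R})=1$; but $0\in\mathbb{R}$ is a point of the contour, $\mathrm{dist}(0,\mathbb{R})=0$, and $\|1/(\cdot-0)\|_{L^2(\mathbb{R})}=\infty$, so the Cauchy--Schwarz step $|\mathcal{C}(w)(0)|\le\frac{1}{2\pi}\|r\|_2\,\|1/(s-0)\|_2$ fails outright. At $z=0$ one is evaluating a boundary value $\mathcal{C}_\pm(\mu w)(0)$, and the estimate must exploit the vanishing $r(0)=0$ (equivalently the decay of $\rho(k)$ as $k\to\infty$, since $z\to 0$ corresponds to $k\to\mp\infty$ under $k=z-1/z$) or the symmetry \eqref{syM}, which ties $M(0)$ to the behavior of $M$ at $z=\infty$, in order to tame the singularity of $1/s$ at $s=0$. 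To be fair, the paper's own proof writes out only the $z=i$ case and dismisses the rest with ``the others can be estimated in the same way,'' so it is equally silent on this point; but your proposal, unlike the paper, commits to a specific justification (distance to the contour) that is false at $z=0$, so the $|M(0)-I|$ bound is a genuine gap in your argument as written.
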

	\begin{proof}
		Applying the H\"older inequality yields the result. For example,
		\begin{align*}
			|M(i;y)-I|&=\Big|\frac{1}{2\pi }\int_{\mathbb{R}}\frac{(\mu(s;y)-I )w(s;y)}{s-i}ds+\frac{1}{2\pi }\int_{\mathbb{R}}\frac{w(s;y)}{s-i}ds\Big|\\
			&\leq (\|r\|_\infty\|\mu-I\|_2+\|r\|_2)\|1/(\cdot-i)\|_2\\
			&\leq C\|r\|_2(2\|r\|_\infty+1)\leq C(\rho_0)\|r\|_{L^2(\mathbb{R}_k)\cap L^\infty(\mathbb{R}_k)}.
		\end{align*}
		The others can be estimated in the same way. This completes the proof of Proposition \ref{proMi}.
	\end{proof}
	The following corollary is obtained directly from the above proposition.
	\begin{corollary}\label{coroi}
		There exists a constant $\epsilon_0>0$ such that when $\|\langle \cdot\rangle r\|_{L^2(\mathbb{R}_k)\cap L^\infty(\mathbb{R}_k)}<\epsilon_0$, it follows that
		$$\Big| M_{11}(0)-1\Big|<1.$$
	\end{corollary}

	\section { The  existence of global solutions}\label{sec6}
	
	\subsection{Time evolution of reflection coefficient}\label{sect}
	
	According to the time  spectral problem in the Lax pair  \eqref{lax0}  and  scattering relation \eqref{scattering}, it is found  that the   time evolution of $a(z;t )$ and  $b(z;t )$ satisfy the equations
	\begin{align}
		\partial_t a(z;t )=0,\ 	\partial_t b(z;t )=-\dfrac{2iz(z^2-1)}{z^2+1}b(z;t),\ z\in\mathbb{R},
	\end{align}
	which yield
	\begin{align}
		a(z;t )=a(z;0 ),\ b(z;t )=e^{-\frac{2iz(z^2-1)t}{z^2+1}}b(z ;0),\ z\in\mathbb{R}.
	\end{align}
	Hence, one can  define the time-dependent reflection coefficient by
		\begin{align*}
		r(z;t )=e^{-\frac{2iz(z^2-1)t}{(z^2+1)^2}}r( z ;0 ),\ z\in\mathbb{R}.\label{ct}
	\end{align*}
	%whose  proof
	%is similar to Lemma 3.1 in  \cite{YFL}.
	And we have the estimate in the following lemma which is obtained by a simple calculation.
	\begin{lemma}\label{pror}
	When  $r(\cdot;0)\in  H^{1,1}(\mathbb{R}_k)$,  it is inferred that  $r(\cdot;t)\in  H^{1,1}(\mathbb{R}_k)$. And
		\begin{align}
			\|r(\cdot;t)\|_{H^{1,1}(\mathbb{R}_k)}\leq C(t)	\|r(\cdot;0)\|_{H^{1,1}(\mathbb{R}_k)},
		\end{align}
		where $C(t)>0$  may grow at most polynomially in $t$ but it remains finite for every $t>0$. And 	if $ \tilde{r}(\cdot;0) \in     H^{1,1}(\mathbb{R}_k)$, then for every $t\in\mathbb{R}^+$, it also leads to $ \tilde{r}(\cdot;t)= e^{-\frac{2iz(z^2-1)t}{(z^2+1)^2}}\tilde{r}( z ;0 )\in     H^{1,1}(\mathbb{R}_k)$.
	\end{lemma}
	Furthermore, we denote $M_l(z;t,y)$ as the  solution for  the  RH problem \ref{RHP1} under time-dependent reflection coefficient
	$r(\cdot;t)$, and denote $M_r(z;t,y)$ as the  solution for  the  RH problem \ref{RHP2} under time-dependent reflection coefficient
	$\tilde{r}(\cdot;t)$.
	%$M_r(z;0,y)$ also has  a similar  time evolution denoted by $M_r(z;t,y)$.
	 From Propositions  \ref{lemmaG} and  \ref{Pro3.2}, the following proposition holds immediately.
	\begin{Proposition}\label{exM}
		Assume that $r(\cdot;0),\ \tilde{r}(\cdot;0)\in H^{1,1}(\mathbb{R}_k )$. There  exist  $M_l(z;t,y)$ and  $M_r(z;t,y)$ uniquely for  $\forall t\in\mathbb{R}^+$.
	\end{Proposition}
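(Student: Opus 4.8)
The plan is to obtain Proposition \ref{exM} as an essentially immediate corollary of the solvability results already proved for the $y$-dependent RH problems, combined with the explicit time evolution of the scattering data derived at the start of Section \ref{sect}. The only thing that requires checking is that introducing the extra time parameter $t$ does not spoil any of the hypotheses under which Propositions \ref{lemmaG} and \ref{Pro3.2} were established; since those hypotheses are entirely about the function space to which the reflection coefficient belongs and the location of the discrete spectrum, and neither of these is affected by the time flow, the conclusion will follow.

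Concretely, I would proceed as follows. First, recall from \eqref{eqn after asyMi} (the time-evolution formulas at the top of Section \ref{sect}) that $a(z;t)=a(z;0)$ and $b(z;t)=e^{-2iz(z^2-1)t/(z^2+1)}b(z;0)$, so that the time-dependent reflection coefficient is
\begin{align}
	r(z;t)=e^{-\tfrac{2iz(z^2-1)t}{z^2+1}}r(z;0).\nonumber
\end{align}
The key observation is that the modulus of the oscillatory factor is $1$ for $z\in\mathbb{R}$, so $\|r(\cdot;t)\|_\infty=\|r(\cdot;0)\|_\infty$; moreover, in the $k=z-1/z$ variable the phase is the smooth bounded-derivative function $kt$ times a factor, so multiplication by the phase is a bounded operation on $H^{1,1}(\mathbb{R}_k)$ for each fixed $t$. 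Hence $r(\cdot;0)\in H^{1,1}(\mathbb{R}_k)$ implies $r(\cdot;t)\in H^{1,1}(\mathbb{R}_k)\subset H^s(\mathbb{R}_k)$ for every $s\le 1$, in particular for some $s>1/2$; the same holds for $\tilde r$. Also, since $a(z;t)=a(z;0)$, the discrete spectrum $\{z_k\}$ and the ``no eigenvalue/no spectral singularity'' status are time-independent, so the residue data at time $t$ still live in the admissible class $H^{1,1}(\mathbb{R}_k)\otimes\mathbb{C}^{N_0}\otimes\mathbb{C}^{N_0}$.

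With these facts in hand, the proof is a direct invocation: for $y\in[0,+\infty)$, apply Proposition \ref{lemmaG} with $r$ replaced by $r(\cdot;t)$ — the operator $I-\mathcal{K}$ built from $w_\pm(z;t,y)=w_\pm(z;y)$ with the phase $\theta$ now also carrying the $t$-dependence through $e^{-2it\theta_n}$ etc., remains a Fredholm operator of index zero with bounded inverse (the compactness argument for $\mathcal{K}^2$ and the Cauchy--Goursat vanishing argument for injectivity used only $|e^{\pm 2i\theta}|=1$ on $\mathbb{R}$ and $r\in H^s$, both still valid) — to conclude $M_l(z;t,y)$ exists and is unique via the Beals--Coifman representation \eqref{intM}. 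For $y\in(-\infty,0]$, apply Proposition \ref{Pro3.2} in the same way to get existence and uniqueness of $M_r(z;t,y)$. Taking the union over all $t\in\mathbb{R}^+$ gives the statement.

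I do not expect a genuine obstacle here; the proposition is a bookkeeping step. The one point that deserves a sentence of care is the claim that multiplication by $e^{-2iz(z^2-1)t/(z^2+1)}$ preserves $H^{1,1}(\mathbb{R}_k)$: one should note that in the $k$-variable the exponent equals $i k t$ (using $z-1/z=k$ and checking $2z(z^2-1)/(z^2+1)=k$ is not literally true, so more precisely one writes the phase as a function of $k$ and verifies its first derivative in $k$ is bounded), so that differentiating brings down at worst a factor linear in $k$, controlled by $\|(\cdot)r\|_{L^2(\mathbb{R}_k)}<\infty$, while the weight $\langle k\rangle$ is untouched by a modulus-one multiplier. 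That elementary check, together with the time-independence of $a$, is all that is needed, and the authors indeed note the proof is similar to the corresponding step in \cite{YFL}.
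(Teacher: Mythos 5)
Your proposal is correct and follows the paper's own route: the paper likewise notes $a(z;t)=a(z;0)$ and the modulus-one time evolution of $b$, records (via its Lemma on time-dependent scattering data, with proof deferred to \cite{YFL}) that $r(\cdot;t)$ stays in $H^{1,1}(\mathbb{R}_k)$, and then declares the proposition to hold ``immediately'' from Propositions \ref{lemmaG} and \ref{Pro3.2}. Your extra sentence of care about the phase as a function of $k$ (it equals a bounded function of $k$ with bounded derivative, so multiplication by it preserves $H^{1,1}(\mathbb{R}_k)$) is exactly the detail the paper outsources, and your argument for it is sound.
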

	For convenience, we define
	\begin{align}
		M(z;t,y)=\left\{ \begin{array}{ll}
			M_l(z;t,y),   &\text{as } y\in \mathbb{R}^+,\\[12pt]
			M_r(z;t,y) , &\text{as }y\in \mathbb{R}^-,\\
		\end{array}\right.\label{M1t}
	\end{align}
	
	The crux of the matter is whether the functions  reconstituted from the time-depended RH problem via Lemma \ref{lemm3.2} is the solution of mCH equation \eqref{mch}. In the rest of this subsection, we verify  that the reconstruction formula
	remain accessible for $M_l(z;t,y)$ with $y\in\mathbb{R}^+$ and for $M_r(z;t,y)$ with $y\in\mathbb{R}^-$.
	The first step is to prove the existence of $\partial_t M(z;t,y)$ and $\partial_y M(z;t,y)$. We give the details when  $y\in\mathbb{R}^+$. Take $t$-derivative at the both side of
	\eqref{mu} and obtain
	\begin{align}
		\partial_t\mu(z;y)=\partial_t\mathcal{C}(w)(z;y)+(\partial_t\mathcal{K})(	\mu-I)(z;y)+\mathcal{K}(\partial_t	\mu)(z;y),
	\end{align}
	where
	\begin{align*}
		&\|\partial_t\mathcal{C}(w)\|_2=\Big\|\mathcal{C}\left( \frac{z(z^2-1)}{(z^2+1)^2}w\right) \Big\|_2\leq C\|r\|_2,\\
		&\|(\partial_t\mathcal{K})(	\mu-I)\|_2=\Big\|\mathcal{K}\left( \frac{z(z^2-1)}{(z^2+1)^2}(	\mu-I)\right)\Big \|_2\leq C\|r\|_\infty\|\mu-I\|_2.
	\end{align*}
	So the function $\partial_t\mu(z;y)$ exists uniquely in $L^2(\mathbb{R})$ with
	\begin{align*}
		\partial_t\mu(z;y)=(I-\mathcal{K})^{-1}\left( \partial_t\mathcal{C}(w)+(\partial_t\mathcal{K})(	\mu-I)\right)(z;y),
	\end{align*}
	which establishes the existence of $\partial_t M(z;t,y)$ with
	\begin{align}
		\partial_t M(z;t,y)=\mathcal{C}(\partial_t\mu w)(z;y)+\mathcal{C}((\mu-I )\partial_tw)(z;y)+\mathcal{C}(\partial_tw)(z;y).
	\end{align}
	The existence of $\partial_y M(z;t,y)$ is established in a similar analysis.
	Moreover, the following proposition is obtained in the similarly way as Proposition \ref{proMi},
	\begin{Proposition}\label{proMit}
		Suppose that $r\in H^{1,1}(\mathbb{R}_k)$ with $\|r\|_{H^{1,1}(\mathbb{R}_k)}\leq \rho_0$ for some $\rho_0>0$. Then there exists a positive constant $C(\rho_0)$ such that
		\begin{align*}
			|\partial_tM(i)|,\ &	|\partial_t\partial_yM(i)|,\  |\partial_tM^{i,(1)}(y)|,\ |\partial_t\partial_yM^{i,(1)}(y)|,\\
			&|\partial_t\partial_yM^{i,(1)}(y)|\leq C(\rho_0)\|r\|_{H^{1,1}(\mathbb{R}_k)}.
		\end{align*}
	\end{Proposition}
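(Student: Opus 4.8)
The plan is to obtain $\partial_t$-derivatives of the building blocks of the Beals--Coifman representation \eqref{intM} and then transfer the resulting $L^2$ bounds pointwise to $z=i$ and $z=0$ exactly as in Proposition \ref{proMi}. First I would note that the time dependence enters only through the phase: by the time evolution formulas for $b$, the jump data carry the extra factor $\exp\!\big(-\tfrac{2iz(z^2-1)t}{z^2+1}\big)$, so that $\partial_t w = -\tfrac{2iz(z^2-1)}{z^2+1}\,w$ on $\mathbb{R}$, and similarly each residue coefficient $c_j e^{-2i\theta(z_j)}$ in \eqref{intM} picks up a bounded (for $t\in\mathbb{R}^+$, $\mathrm{Im}\,z_j>0$) exponential time factor with $t$-derivative controlled by a constant depending on $z_j$ only. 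Since the rational prefactor $\tfrac{z(z^2-1)}{(z^2+1)^2}$ appearing after one differentiation of $\theta$'s coefficient is bounded on $\mathbb{R}$ and vanishes at $z=0,\pm i$, multiplication by it is a bounded operator on $L^2(\mathbb{R})$ with norm $\lesssim 1$; this is the observation that keeps all estimates uniform.

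Next I would differentiate the Fredholm equation \eqref{mu} in $t$, which is legitimate because $(I-\mathcal{K})^{-1}$ is a bounded operator with norm $\le 2$ uniformly in $t$ by Proposition \ref{lemmaG} (its proof only used $r\in H^s$, $s>1/2$, and is insensitive to the bounded phase modulation). This yields
\[
\partial_t\mu = (I-\mathcal{K})^{-1}\big(\partial_t\mathcal{C}(w)+(\partial_t\mathcal{K})(\mu-I)\big),
\]
together with the bounds $\|\partial_t\mathcal{C}(w)\|_2\lesssim \|r\|_2$ and $\|(\partial_t\mathcal{K})(\mu-I)\|_2\lesssim \|r\|_\infty\|\mu-I\|_2\lesssim \|r\|_\infty\|r\|_2$ already recorded in the excerpt, hence $\|\partial_t\mu\|_2\le C(\rho_0)\|r\|_{H^{1,1}(\mathbb{R}_k)}$. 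Then, from the explicit Cauchy-integral representation of $\partial_t M$ displayed above the statement, I would estimate $\partial_t M(z;t,y)$ and its $y$-derivative at $z=i$ and $z=0$ by the Hölder/Cauchy--Schwarz argument of Proposition \ref{proMi}: every term is an integral over $\mathbb{R}$ of a product of one factor from $\{\mu-I,\partial_t\mu,\partial_y\mu,\partial_t\partial_y\mu\}$ (all $L^2$-bounded by the preceding propositions, after also differentiating \eqref{muy} in $t$ to get $\partial_t\partial_y\mu$) against a factor from $\{r,\,kr,\,\tfrac{z(z^2-1)}{(z^2+1)^2}r,\dots\}$ (all in $L^2\cap L^\infty$) divided by $(s-i)$ or $s$, and the residue sums contribute finitely many terms with bounded time factors. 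The same scheme applied to the definition of $M^{i,(1)}$ (the $(z-i)$-coefficient of $M$ near $z=i$) gives the bounds for $\partial_t M^{i,(1)}$, $\partial_t\partial_y M^{i,(1)}$.

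To assemble these into the five quantities in the statement I would differentiate the local expansions of $M$ at $z=i$ once more and keep careful track of which mixed derivatives are needed; the only genuinely delicate point is the interchange of $\partial_t$ with the Cauchy operator and with $(I-\mathcal{K})^{-1}$, which must be justified by dominated convergence using the uniform-in-$t$ operator bounds and the $H^{1,1}$ control of $r$ — but this is exactly parallel to the $\partial_y$ analysis already carried out, so I expect no new obstacle. The main bookkeeping obstacle, rather than a conceptual one, is organizing the simultaneous $t$- and $y$-differentiation of the coupled system \eqref{mu}--\eqref{muy} so that $\partial_t\partial_y\mu$ is obtained with an $L^2$ bound of the right order $\|(\cdot)r\|_{L^2\cap L^\infty}$ times a constant $C(\rho_0)$; once that commutator-type estimate is in hand, the pointwise bounds at $z=i$ follow as in Proposition \ref{proMi}.
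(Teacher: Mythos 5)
Your proposal is correct and follows essentially the same route as the paper: differentiate the Fredholm equation \eqref{mu} in $t$, use that the rational time factor is bounded on $\mathbb{R}$ together with the uniform bound on $(I-\mathcal{K})^{-1}$ to get $\|\partial_t\mu\|_2\le C(\rho_0)\|r\|_{H^{1,1}(\mathbb{R}_k)}$, and then transfer to $z=i$, $z=0$ and to $M^{i,(1)}$ by the H\"older argument of Proposition \ref{proMi}. (Minor slip: $\tfrac{z(z^2-1)}{(z^2+1)^2}$ is singular, not vanishing, at $z=\pm i$; but only its boundedness on the real line is used, so nothing is affected.)
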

	In view of the symmetry of $M$ in \eqref{syM} and the fact that det$M$=1, we  denote
	\begin{align*}
		M(0)=\left(\begin{array}{cc}
			\beta_0  & \eta_0\\
			\eta_0	& \beta_0
		\end{array}\right),
		\hspace{0.3cm}	M(i)=\left(\begin{array}{cc}
			f_0  & \frac{\eta_0}{2f_0}\\
			\frac{\beta_0-1}{\eta_0}f_0	& \frac{\beta_0+1}{2f_0}
		\end{array}\right),
		\hspace{0.3cm}M^{i,(1)}=\left(\begin{array}{cc}
			\frac{\beta_0-1}{\eta_0}g_1  & g_2\\
			g_1	& \frac{\beta_0-1}{\eta_0}g_2
		\end{array}\right),
	\end{align*}
	where $\eta_0\in i\mathbb{R}$, $\beta_0,\  f_0,\   g_1,\  g_2\in\mathbb{R}$. Specially,  when $\eta_0=0$, $(\beta_0-1)/\eta_0=0$.
	It also follows that $M(i)\sigma_3 M(i)^{-1}=\sigma_3M(0)^{-1}$.
	We recall the symbol that
	\begin{align*}
		\eta=\lim_{z\to \infty}z(M(z)-I)_{12}.
	\end{align*}
	In addition, define
	\begin{align}
		\Psi(z;y,t)=M(z;y,t)e^{-\frac{i}{4}(z-1/z)y\sigma_3+2i\frac{z(z^2-1)}{(z^2+1)^2}t\sigma_3},\label{psi}
	\end{align}
	which admits the proposition below.
	\begin{Proposition}\label{pro lax}
		$\Psi$ defined in \eqref{psi} satisfies the differential equations
		\begin{align}
			\Psi_y=A\Psi,\hspace*{0.5cm}	\Psi_t=B\Psi,
		\end{align}
		where
		\begin{align*}
			A=&-\frac{iz}{4}\sigma_3+\frac{i}{2}\eta\sigma_1+\frac{i}{4z}(\beta_0^2+\eta_0^2)\sigma_3+\frac{1}{2z}\eta_0\beta_0\sigma_2,\\
			B=&-\frac{1}{(z-i)^2}\left(\begin{array}{cc}
				\beta_0  & -\eta_0\\
				\eta_0	& -\beta_0
			\end{array}\right)+\frac{i}{z-i}\left(\begin{array}{cc}
				\beta_0  & -\eta_0\\
				\eta_0	& -\beta_0
			\end{array}\right)\\
			&-\frac{1}{z-i}\left(\begin{array}{cc}
				2(\frac{\beta_0-1}{\eta_0}g_2f_0+ \frac{\eta_0}{2f_0} g_1)& -2f_0g_2- 2\frac{\beta_0-1}{2f_0} g_1\\
				2\frac{\beta_0-1}{\beta_0+1} g_2f_0+\frac{\beta_0+1}{f_0} g_1	& -2(\frac{\beta_0-1}{\eta_0}g_2f_0+ \frac{\eta_0}{2f_0} g_1)
			\end{array}\right)\\
			&+\frac{1}{(z+i)^2}\left(\begin{array}{cc}
				\beta_0  & -\eta_0\\
				\eta_0	& -\beta_0
			\end{array}\right)+\frac{i}{z+i}\left(\begin{array}{cc}
				\beta_0  & -\eta_0\\
				\eta_0	& -\beta_0
			\end{array}\right)\\
			&+\frac{1}{z+i}\left(\begin{array}{cc}
				-2(\frac{\beta_0-1}{\eta_0}g_2f_0+ \frac{\eta_0}{2f_0} g_1)& 2\frac{\beta_0-1}{\beta_0+1} g_2f_0+\frac{\beta_0+1}{f_0} g_1\\
				-2f_0g_2- 2\frac{\beta_0-1}{2f_0} g_1	& 2(\frac{\beta_0-1}{\eta_0}g_2f_0+ \frac{\eta_0}{2f_0} g_1)
			\end{array}\right)
		\end{align*}
		Moreover, via denoting
		\begin{align*}
			&\tilde{u}=-\frac{g_1}{f_0}-2\frac{\beta_0-1}{\eta_0^2}f_0g_2,\hspace{0.5cm}\\
			&\tilde{q}=\frac{1}{\beta_0},\hspace{0.5cm}\tilde{m}=\frac{\eta_0}{\beta_0i},
		\end{align*}
		the compatibility condition
		\begin{align*}
			A_t+AB-B_y-BA=0
		\end{align*}
		yields the mCH equation in the $(y,t)$ variables. In addition,  combining the variable transformation in \eqref{recons x}, we obtain
		\begin{align}
			x_y=\beta_0,\label{dxy}\hspace{0.5cm}\tilde{u}_x=\frac{g_1}{f_0}-2\frac{\beta_0-1}{\eta_0^2}f_0g_2,
		\end{align}
	\end{Proposition}
	\begin{proof}
		The definition of $\Psi$ implies that the jump of $\Psi$ is  independent of $y$ and $t$.  Consequently, $\Psi_y\Psi^{-1}$ and $\Psi_t\Psi^{-1}$ have no jump. We analyze $\Psi_y\Psi^{-1}$ first. \eqref{psi} gives that
		\begin{align*}
			\Psi_y\Psi^{-1}=M_yM^{-1}-\frac{i}{4}\left(z-\frac{1}{z} \right) M\sigma_3M^{-1},
		\end{align*}
		which  is a meromorphic function, with possible singularities at $z=0$ and $z=\infty$. As $z\to\infty$,
		\begin{align*}
			\Psi_y\Psi^{-1}=-\frac{i}{4}z\sigma_3+\frac{i}{2}\eta\sigma_1+\mathcal{O}(1/z),
		\end{align*}
		while as $z\to0$,
		\begin{align*}
			\Psi_y\Psi^{-1}=\frac{i}{4z}M(0)\sigma_3M(0)^{-1}+\mathcal{O}(1).
		\end{align*}
		Therefore, the function
		\begin{align*}
			\Psi_y\Psi^{-1}-\frac{i}{4z}M(0)\sigma_3M(0)^{-1}+ \frac{i}{4}z\sigma_3-\frac{i}{2}\eta\sigma_1
		\end{align*}
		is  holomorphic in $\mathbb{C}$ and vanish at $z\to\infty$. Then, by Liouville's theorem, it vanishes identically, which leads to the result $ \Psi_y=A\Psi$. Similarly,  	\begin{align*}
			\Psi_t\Psi^{-1}=M_tM^{-1}+2i\frac{z(z^2-1)}{(z^2+1)^2} M\sigma_3M^{-1},
		\end{align*}
		is a meromorphic function, with possible singularities at $z=\pm i$. From the decomposition
		\begin{align*}
			2i\frac{z(z^2-1)}{(z^2+1)^2}=\frac{i}{z+i}+\frac{i}{z-i}+\frac{1}{(z+i)^2}-\frac{1}{(z-i)^2},
		\end{align*}
		we obtain that $ \Psi_t=B\Psi$. Using the  compatibility condition for the function $\Psi$ results in the compatibility equation: $A_t + AB - B_y - BA = 0$.
Considering this compatibility equation at the singular points of both $A$ and $B$, we can derive the algebraic and differential relationships among the coefficients of $A$ and $B$. These relationships can be further reduced to the  equation \eqref{mchy}. Moreover, when we combine the compatibility equation with the expression for the change of variables as presented in equation \eqref{recons x}, it leads to the mCH equation \eqref{mch}. This completes the proof of Proposition \ref{pro lax}.
	\end{proof}

%Using this compatibility condition on $\Psi$ then yields the compatibility equation $A_t+AB-B_y-BA=0.$
%In view of  this compatibility equation at the singular points for $A$ and $B$, the algebraic and differential equations among
%		the coefficients of $A$ and $B$ are derived,  which can be reduced to \eqref{mchy}. Furthermore, the  compatibility equation together with the expression for the %change of variables in \eqref{recons x} leads to  the mCH equation \eqref{mch}. This completes the proof of Proposition \ref{pro lax}.
	%\end{proof}

	\subsection {The proof of main  result}\label{sec62}
	
	The main goal of this subsection is to give the proof of Theorem \ref{last}.
	In view of the result in Section \ref{sec3}, we  use $M_l(z;t,y)$ to recover the solution $m(t,x(y,t))$ for  $y\in\mathbb{R}^+$, and   $M_r(z;t,y)$ to recover the solution $m(t,x(y,t))$ for $y\in\mathbb{R}^-$ through Lemma \ref{lemm3.2}.
	The first step is to give the proof of the  lemma below.
	\begin{lemma}\label{the1}
		Assume that  the initial data   $ m_0 \in H^{2,1}(\mathbb{R} )$ such that $\|m_0\|_{H^{2,1}}$ is small.
		Then  there exists a unique global solution $m(t,y)$ for every $t\in\mathbb{R}^+$ to  the Cauchy  problem \eqref{mch}-\eqref{mch1}  for   the mCH equation in $C([0, +\infty); L^{\infty}(\mathbb{R}))$.
	\end{lemma}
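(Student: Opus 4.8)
The plan is to carry the small initial datum through the full inverse scattering pipeline set up in Sections \ref{sec2}--\ref{sect}: direct transform $\Rightarrow$ reflection coefficients, time evolution of the scattering data, solvability of the Riemann--Hilbert problems, reconstruction formula; and then to read off the $L^\infty$-bound and the continuity in $t$ from the explicit Cauchy-integral representation \eqref{intM} of $M$. First I would run the direct transform on $m_0$. Because $\|m_0\|_{H^{2,1}}$ is small, Proposition \ref{Propmu} furnishes the Jost functions $\mu_\pm$, and the remark after Definition \ref{Def U} shows that the associated coefficient $a(z)$ has no zero on $\overline{\mathbb{C}^+}$; hence the discrete spectrum is empty ($N_0=0$) and the scattering data reduce to the reflection coefficients $r,\tilde r$, which lie in $H^{1,1}(\mathbb{R}_k)$ with norm $\lesssim\|m_0\|_{H^{2,1}}$ by Lemma \ref{lemmar1} and Corollary \ref{rk}. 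The time evolution of Section \ref{sect} multiplies $r$ and $\tilde r$ by a unimodular phase that, in the variable $k=z-1/z$, has a bounded $k$-derivative for each fixed $t$ and is frozen in modulus; consequently $r(\cdot;t),\tilde r(\cdot;t)\in H^{1,1}(\mathbb{R}_k)$ for all $t\ge 0$, with $\|r(\cdot;t)\|_{L^2\cap L^\infty}=\|r(\cdot;0)\|_{L^2\cap L^\infty}$ and $\|r(\cdot;t)\|_{H^{1,1}(\mathbb{R}_k)}$ finite and locally bounded in $t$.

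For each fixed $(t,y)$, Propositions \ref{lemmaG}, \ref{Pro3.2} and \ref{exM} produce the unique solution $M(z;t,y)$ of RHP \ref{RHP1} when $y\ge 0$ and of RHP \ref{RHP2} when $y\le 0$ (with $N_0=0$ the residue conditions drop out), together with the representation \eqref{intM} and the $L^2$-bound \eqref{muL2}. By the smallness of $r$ and the pointwise estimate of Proposition \ref{proMi}, $M(0;t,y)$ and $M(i;t,y)$ are $O(\|r(\cdot;t)\|_{L^2\cap L^\infty})$-close to $I$ \emph{uniformly in} $y$; hence $M_{11}(0;t,y)$ is bounded away from $0$, the denominators in \eqref{recons q}--\eqref{recons x} do not vanish, the logarithm in \eqref{recons x} is bounded uniformly in $y$ so that $x(y,t)-y$ is bounded, and by Corollary \ref{coroi} the map $y\mapsto x(y,t)$ is strictly increasing; therefore it is a $C^1$ bijection of $\mathbb{R}$. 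Writing $M(0;t,y)$ in the form dictated by the symmetry \eqref{syM}, I would set $q(y,t)=1/M_{11}(0;t,y)=1/\beta_0$ and recover the potential by $m(t,x(y,t))=\eta_0/(i\beta_0)$ (equivalently through $q^2=m^2+1$); Proposition \ref{Proeq} guarantees that the $M_l$- and $M_r$-reconstructions agree at $y=0$, so this defines a single function on $\mathbb{R}^+\times\mathbb{R}$.

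Next I would verify that this function solves the equation. The Proposition of Section \ref{sect} (the Lax-pair representation of $\Psi=M\,e^{-\frac{i}{4}(z-1/z)y\sigma_3+2i\frac{z(z^2-1)}{(z^2+1)^2}t\sigma_3}$) gives $\Psi_y=A\Psi$, $\Psi_t=B\Psi$, and its compatibility relation $A_t+AB-B_y-BA=0$ is equivalent to the mCH equation for $\eta_0/(i\beta_0)$ in the $(y,t)$ variables; pulling back by the diffeomorphism $y\mapsto x(y,t)$ (and inverting the reciprocal transformation \eqref{transy}) turns this into a solution of \eqref{mch} with $m(0)=m_0$, and the $L^\infty$-norm in $x$ equals that in $y$. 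For the latter, the uniform-in-$y$ bound of Proposition \ref{proMi} yields
\[
\|m(t,\cdot)\|_{L^\infty}=\sup_{y\in\mathbb{R}}\Big|\frac{\eta_0(y,t)}{i\beta_0(y,t)}\Big|\lesssim\|r(\cdot;t)\|_{L^2\cap L^\infty}\lesssim\|m_0\|_{H^{2,1}},
\]
which is even uniform in $t$; the existence of $\partial_t M$ established in Section \ref{sect} and the estimates of Proposition \ref{proMit}, together with the local-in-$t$ control of $\|r(\cdot;t)\|_{H^{1,1}(\mathbb{R}_k)}$, upgrade this to $m\in C([0,+\infty);L^\infty(\mathbb{R}))$. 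Uniqueness follows from the bijectivity of the scattering correspondence within this class: a solution of \eqref{mch}--\eqref{inimch} generates scattering data obeying the same phase law, hence the same $M$ and the same $m$, by the uniqueness part of Propositions \ref{lemmaG} and \ref{Pro3.2}.

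The step I expect to be the main obstacle is making the $L^\infty$-control genuinely \emph{pointwise in the spatial variable}: the representation \eqref{intM} only bounds $M-I$ in $L^2(\mathbb{R})$, so the uniform-in-$y$ estimates at the special points $z=0$ and $z=i$ cannot be obtained by Cauchy--Schwarz in the Cauchy integral and must go through the dedicated argument behind Proposition \ref{proMi}. Closely related, and nearly as delicate, is converting the merely pointwise-in-$t$ solvability of the RHPs into honest continuity $m\in C([0,+\infty);L^\infty)$ and checking that the reconstructed $m$ is a genuine (not just formal) solution of \eqref{mch} once the nonlocal change of variables \eqref{transy} has been inverted; both rest on the $\partial_y$- and $\partial_t$-estimates recorded in Section \ref{sect} and Propositions \ref{proMi} and \ref{proMit}.
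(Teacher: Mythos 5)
Your derivation of the uniform $L^{\infty}$ bound is essentially the paper's: smallness of $\|m_0\|_{H^{2,1}}$ forces $a$ to be zero--free so the discrete spectrum is empty, the RH problems are solvable for all $(t,y)$, the modulus of $r(\cdot;t)$ is frozen in time, and Proposition \ref{proMi} combined with the symmetry \eqref{syM} and $\det M\equiv 1$ keeps $M_{11}(0;t,y)$ within distance $<1$ of $1$ uniformly in $y$ and $t$, so $q=1/M_{11}(0)$ from \eqref{recons q} is uniformly bounded. Where you diverge is in how the reconstructed object is identified with \emph{the} solution of \eqref{mch}--\eqref{inimch} and how uniqueness is obtained. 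The paper does not run a self-contained dressing-plus-bijectivity argument here: it invokes the local well-posedness result of \cite{qu7} (Theorem 4.1 there) to obtain a unique local solution $m\in C([0,T],H^{2}(\mathbb{R}))$, identifies it with the RHP reconstruction, and closes with a continuation argument --- if the maximal existence time $T_{\max}$ were finite, $\|q(t,\cdot)\|_{\infty}$ would have to blow up as $t\to T_{\max}^{-}$, contradicting the uniform bound supplied by the RHP.

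The genuine gap in your version is the uniqueness step. The claim that ``a solution of \eqref{mch}--\eqref{inimch} generates scattering data obeying the same phase law, hence the same $M$ and the same $m$'' presupposes that an arbitrary competitor solution in the stated class is, at each fixed $t>0$, regular and decaying enough to run the direct transform of Section \ref{sec2} on it and that its scattering data evolve by the linear law of Section \ref{sect}. Neither fact is available a priori for a solution that is merely in $C([0,+\infty);L^{\infty}(\mathbb{R}))$ (or even $H^{2,1}$-valued at each time) without first proving persistence of regularity --- which is precisely the content of the PDE well-posedness theory you are trying to bypass, and is why the paper imports \cite{qu7}. The uniqueness assertions in Propositions \ref{lemmaG} and \ref{Pro3.2} concern the RH problem for \emph{given} scattering data and cannot, by themselves, separate two putative PDE solutions. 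A secondary, fixable point: identifying the $(y,t)$-solution with an $(x,t)$-solution and checking $m(0,\cdot)=m_0$ requires the global invertibility and $C^{1}$ regularity of $y\mapsto x(y,t)$ for every $t$, which rests on Corollary \ref{coroi} and hence on the quantitative smallness $\max_{j=0,1}\|(\cdot)^{j}r\|_{L^{2}(\mathbb{R}_k)\cap L^{\infty}(\mathbb{R}_k)}<\epsilon_0$; you do use this, but it is the place where smallness beyond mere absence of spectrum is indispensable and should be flagged as such.
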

	\begin{proof}
		When $ m_0 \in H^{2,1}(\mathbb{R} )$, it follows from Lemma  \ref{lemmasec2}  that $r(z;0)\in H^{1,1}(\mathbb{R}_k)$.
		%As  shown in Proposition \ref{exM}, for every $t>0$, $M(z;t,y)$ defined in \eqref{M1t} exists uniquely for every $t\in\mathbb{R}^+$, $y\in\mathbb{R}$.
		As  shown in Proposition \ref{exM}, there exists a unique solution $M(z;t,y)$ defined in \eqref{M1t} for every $t\in\mathbb{R}^+$, $y\in\mathbb{R}$.
		
		As shown in Theorem 4.1  \cite{qu7},  there exists a time $T>0$ such that  the initial-value problem \eqref{mch} has a unique solution  $m(t,\cdot)\in C([0,T],H^2(\mathbb{R}))$. Hence,  $m$ reconstituted from $M(z;t,y)$ in Lemma \ref{lemm3.2} must be unique.
		It remains to prove $m(t,\cdot)\in C([0, +\infty); L^{\infty}(\mathbb{R}))$. %which is equivalent to prove that  $q(t,\cdot)=\sqrt{1+m(t,\cdot)^2}\in C([0, +\infty); L^{\infty}(\mathbb{R}))$.
		It suffices to  show that  $q(t,\cdot)=\sqrt{1+m(t,\cdot)^2}\in C([0, +\infty); L^{\infty}(\mathbb{R}))$.
		
		From Lemma \ref{pror}, for ever $t\in[0,T]$,
		$\|r(\cdot;t)\|_{L^2(\mathbb{R}_k)\cap L^\infty(\mathbb{R}_k)}$ is independent of $t$.
		The symmetry of $M(z;t,y)$ in \eqref{syM} implies that
		\begin{align*}
			M_{11}(0;t,y)=M_{22}(0;t,y)\in\mathbb{R},\hspace{0.5cm}	M_{12}(0;t,y)=M_{21}(0;t,y)i\in\mathbb{R}
		\end{align*}
		Noting that det$M\equiv1$, thus $|M_{11}(0;t,y)|\leq1$.
		As shown in Corollary \ref{coroi}, when $\|r(\cdot;0)\|_{L^2(\mathbb{R}_k)\cap L^\infty(\mathbb{R}_k)}<\epsilon_0$., it holds that for every $t\in[0,T]$,
		$$\Big	|M_{11}(0;t,y)-1\Big|<1.$$
		Therefore, from \eqref{recons q}, it appears that
		\begin{align}
			1\leq|q(t,y)|<(1-|M_{11}(0;t,y)-1|)^{-1}<\infty.\label{|q|}
		\end{align}
		If there exists a maximal existence time $T_{max}>0$ such that $q(t,\cdot)$ only exists in $C([0, T_{max}]; L^{\infty}(\mathbb{R}))$, then it must appears that $\lim_{t\to T_{max}-}\|q(t,\cdot)\|_\infty=\infty$, from which we derive a contradiction.
		This final argument then completes  the proof of Lemma \ref{the1}.
\end{proof}		
		%Combining Proposition \ref{proMi} and \ref{proMit}, we finally  arrive at the  continuity of $m(t,y)$ about $t$. Then we complete the proof.

	Because the estimates in the above section are under the scaling transformation of $y$. The  equivalency between the integral norm in $\mathbb{R}_x$ and  $\mathbb{R}_y$ is first given to arrive at the $L^2$-estimates under the variable $x$.
	
	\begin{lemma}\label{lemmaytox}
		Assume that $\|r(\cdot;0)\|_{L^2(\mathbb{R}_k)\cap L^\infty(\mathbb{R}_k)}<\epsilon_0$. Then
		for any function $h(x(\cdot))\in L^p(\mathbb{R}_y)$ with $x(y)$ is given in \eqref{recons x}, $1\leq p\leq \infty$, it follows that $h\in L^p(\mathbb{R}_x)$. Furthermore, it is also inferred that
		\begin{align*}
			C_1(\|r(\cdot;0)\|_{L^2(\mathbb{R}_k)\cap L^\infty(\mathbb{R}_k)})|y|\leq |x|\leq	C_2(\|r(\cdot;0)\|_{L^2(\mathbb{R}_k)\cap L^\infty(\mathbb{R}_k)})|y|.
		\end{align*}
		Therefore, $h\in L^{p,s}(\mathbb{R}_x)$  is  equivalent to $h\in L^{p,s}(\mathbb{R}_y)$ for any $s>0$.
	\end{lemma}
	\begin{proof}
		The  equivalency between $ L^p(\mathbb{R}_x)$ and $ L^p(\mathbb{R}_y)$  is obtained directly from \eqref{dxy} and \eqref{|q|} under $\|r(\cdot;0)\|_{L^2(\mathbb{R}_k)\cap L^\infty(\mathbb{R}_k)}<\epsilon_0$.
		As shown in Lemma \ref{pror}, for ever $t\in\mathbb{R}^+$,
		$\|r(\cdot;t)\|_{L^2(\mathbb{R}_k)\cap L^\infty(\mathbb{R}_k)}$ is independent of $t$.
		Recall the symmetry of $M(z)$ that
		\begin{align*}
			M(z)=\sigma_3\overline{	M(-\bar{z})}\sigma_3.
		\end{align*}
		This implies that $M_{11}(i)$, $M_{22}(i)\in\mathbb{R}$ and  $M_{12}(i)$, $M_{21}(i)\in i\mathbb{R}$.
		Noting that det$M\equiv1$, then the boundedness of Proposition \ref{proMi} gives that the functions $|M_{11}(i)+M_{21}(i)|$ and $|M_{12}(i)+M_{22}(i)|$ have nonzero infimum for every $t\in \mathbb{R}^+,\ y\in\mathbb{R}$. In fact,  taking $|M_{11}(i)+M_{21}(i)|$, $y\in\mathbb{R}^+$, as an example,
		\begin{align*}
			1&=\big|M_{22}(i)M_{11}(i)+\text{Im}\left( M_{12}(i)M_{21}(i)\right) \big|\\
			&\leq C |M(i)||M_{11}(i)+M_{21}(i)|,
		\end{align*}
		which leads to that
		\begin{align*}
			|M_{11}(i)+M_{21}(i)|\geq 1/C(\|r(\cdot;0)\|_{L^2(\mathbb{R}_k)\cap L^\infty(\mathbb{R}_k)})>0.
		\end{align*}
		Thus,  it is deduced from \eqref{recons x} that
		\begin{align*}
			|x-y|\leq C(\|r(\cdot;0)\|_{L^2(\mathbb{R}_k)\cap L^\infty(\mathbb{R}_k)}).
		\end{align*}
		This concludes  the result in Lemma \ref{lemmaytox}.
	\end{proof}
	\begin{remark}
		In fact, it follows from the definition of $y$ in \eqref{transy} that
		\begin{align*}
			|x-y|\leq \int_{\mathbb{R}}(\sqrt{m^2(t,s)+1}-1)ds,
		\end{align*}
		where $\int_{\mathbb{R}}(\sqrt{m^2(t,s)+1}-1)ds$ is a conserved quantity  independent of $t$. Thus, $|x-y|$ can be controlled by $\|m_0\|_{2}$.
		\end{remark}
	
	The following  result from the Fourier theory is useful to prove Theorem \ref{last}.
	\begin{lemma}\label{lemmaFourier}
		If $f(z)$ is a complex valued function defined on $\mathbb{R}$ and admits the symmetry $f(z)=f(-1/z)$, then $f(z(k))$ is well-defined under the variable $k=z-1/z$. Denote $\tilde{f}(k)=f(z(k))$. \\
		(a) If $f\in H^{1,1}(\mathbb{R}_k)$, then
\begin{align*}
			\int_{\mathbb{R}}f(z)e^{-\frac{i}{2}(z-1/z)y}dz=\int_{\mathbb{R}}\tilde{f}(k)e^{-\frac{i}{2}ky}dk,
		\end{align*}
		which implies that
		\begin{align*}
			\int_{\mathbb{R}}f(z)e^{-\frac{i}{2}(z-1/z)y}dz\in H^{1,1}(\mathbb{R}_y).
		\end{align*}
		(b) Denote  $\mathcal{C}^k$   as the Cauchy operator on $k$-plane. Then $\mathcal{C}_\pm(f)$ is  also well-defined under the variable $k$ and
		\begin{align}
			\mathcal{C}_\pm(f)(z)=\mathcal{C}^k_\pm(f)(k),
		\end{align}
		which yields that  $\|\mathcal{C}_\pm(f)\|_{L^2(\mathbb{R}_z)}=\|\mathcal{C}^k_\pm(f)\|_{L^2(\mathbb{R}_k)}$.\\
		(c) If $g$ also admits that $g(z)=g(-1/z)$ and $\tilde{g}(k)=g(z(k))$ is in $ H^{1}(\mathbb{R}_k)$, then
		\begin{align*}
			\int_{\mathbb{R}^+} \|\mathcal{C}_+(fe^{2\theta})\|_2^2\|\mathcal{C}_\pm(ge^{\pm2\theta})\|_2^2dy\leq C \|\tilde{f}\|^2_{H^{1/4}(\mathbb{R}_k)}\|\tilde{g}\|^2_{H^{1/4}(\mathbb{R}_k)},\\
			\int_{\mathbb{R}^+}y^2 \|\mathcal{C}_+(fe^{2\theta})\|_2^2\|\mathcal{C}_\pm(ge^{\pm2\theta})\|_2^2dy\leq C \|\tilde{f}\|^2_{H^{3/4}(\mathbb{R}_k)}\|\tilde{g}\|_{H^{3/4}(\mathbb{R}_k)}^2.
		\end{align*}
	\end{lemma}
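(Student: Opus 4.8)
\textbf{Proof proposal for Lemma \ref{lemmaFourier}.}

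The plan is to treat the three parts separately, and in each case to reduce the statement on the $z$-line to a statement on the $k$-line via the change of variables $k=z-1/z$, exploiting the fact that this map is two-to-one from $\mathbb{R}_z$ onto $\mathbb{R}_k$ (with the two preimages $z_\pm(k)$ of the Lemma \ref{lemmar1} proof) and that the $f=f(-1/z)$ symmetry makes $f$ constant on each fibre. For part (a), I would substitute $z=z_\pm(k)$ and split $\int_{\mathbb{R}_z}$ into the two branches $|z|<1$ and $|z|>1$; on each branch $dz = z'(k)\,dk$, and a direct computation (already carried out in the proof of Lemma \ref{lemmar1} for the $L^p$ norms) shows that the two Jacobian contributions sum to $1$, so that $\int_{\mathbb{R}_z} f(z)e^{-\frac{i}{2}(z-1/z)y}\,dz = \int_{\mathbb{R}_k}\tilde f(k)e^{-\frac{i}{2}ky}\,dk$. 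Since this last integral is, up to normalization, the Fourier transform of $\tilde f$, the mapping property $\tilde f\in H^{1,1}(\mathbb{R}_k)\Rightarrow \widehat{\tilde f}\in H^{1,1}(\mathbb{R}_y)$ is the standard fact that the Fourier transform is an isometry of $H^{1,1}$ onto itself (weights and derivatives swap roles), which gives the displayed conclusion.

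For part (b), the key observation is that the Cauchy kernel transforms compatibly: writing $\mathcal{C}f(z)=\frac{1}{2\pi i}\int_{\mathbb{R}}\frac{f(s)}{s-z}\,ds$ and again splitting the $s$-integral over the two fibres, one checks that $\frac{ds_+}{s_+-z}+\frac{ds_-}{s_--z}$ collapses (using the symmetry of $f$ and a partial-fraction identity relating $s_\pm(\kappa)-z$ to $\kappa - k(z)$) to $\frac{d\kappa}{\kappa-k}$, so $\mathcal{C}f(z)=\mathcal{C}^k \tilde f(k)$ pointwise off the real axis; taking non-tangential limits gives $\mathcal{C}_\pm(f)(z)=\mathcal{C}^k_\pm(\tilde f)(k)$. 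The $L^2$-isometry $\|\mathcal{C}_\pm f\|_{L^2(\mathbb{R}_z)}=\|\mathcal{C}^k_\pm\tilde f\|_{L^2(\mathbb{R}_k)}$ then follows from the same Jacobian-sum identity used in part (a) applied to $|\mathcal{C}_\pm f|^2$, or alternatively from $C_2=1$ on both lines together with the equality of the two $L^2$ norms of the data.

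For part (c), having reduced everything to the $k$-line by (a) and (b), I would write $\mathcal{C}_+(fe^{2\theta})$ as (a multiple of) the Fourier multiplier $X_-$ (or $X_+$) applied to the function whose Fourier transform is $\widehat{\tilde f}$ translated by the linear-in-$y$ phase $\theta$; concretely $\theta(z)=-\frac14 k y$, so $\mathcal{C}_+^k(\tilde f e^{-iky/2})$ has, as a function of $y$, an explicit representation in terms of $\widehat{\tilde f}$. Then $\int_{\mathbb{R}^+}\|\mathcal{C}_+(fe^{2\theta})\|_2^2\|\mathcal{C}_\pm(ge^{\pm2\theta})\|_2^2\,dy$ becomes an integral in $y$ of a product of two quantities each controlled by a sliding $L^2$-mass of $\widehat{\tilde f}$, resp. $\widehat{\tilde g}$; bounding this by Cauchy–Schwarz in $y$ and recognizing the resulting $y$-integrals as fractional Sobolev norms (the $H^{1/4}$, resp. $H^{3/4}$, arising from the interpolation/trace inequality $\int_{\mathbb{R}^+}\langle y\rangle^{2a}\|(\text{localized mass})\|^2\,dy \lesssim \|\cdot\|_{H^{a+1/4}}^2$) yields the two claimed estimates. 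I expect part (c) to be the main obstacle: unlike (a) and (b), which are essentially bookkeeping of a change of variables, (c) requires the genuine harmonic-analysis input — an $L^2$-in-$y$ bilinear estimate for Cauchy projections of modulated functions — and care must be taken that the constants are uniform and that the half-line weights $1$ and $y^2$ land exactly on the $H^{1/4}\times H^{1/4}$ and $H^{3/4}\times H^{3/4}$ pairings rather than on an unbalanced split.
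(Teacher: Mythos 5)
Your proposal follows essentially the same route as the paper: the Jacobian contributions of the two branches $z_\pm(k)$ summing to $1$ for part (a), the partial-fraction identity $\frac{1}{s-z}+\frac{1}{s^{2}}\cdot\frac{1}{-\frac{1}{s}-z}=\frac{1+s^{-2}}{(s-\frac{1}{s})-(z-\frac{1}{z})}$ collapsing the Cauchy kernel onto the $k$-line for part (b), and for part (c) the Plancherel identity $(\mathcal{C}_\pm h)^\wedge=\pm X_\pm\hat h$ turning $\|\mathcal{C}_+(fe^{2\theta})\|_2^2$ into the sliding mass $\int_{y/4\pi}^{\infty}|\hat{\tilde f}|^2dk$, followed by Cauchy--Schwarz in $y$ and an interchange of the $y$- and $k$-integrations producing exactly the weights $k^{1/2}$ and $k^{3/2}$, i.e.\ the $H^{1/4}$ and $H^{3/4}$ norms. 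The only step you leave implicit that the paper makes explicit is the Minkowski-type interchange $\bigl(\int_{\mathbb{R}^+}(\int_{y/4\pi}^{\infty}|\hat{\tilde f}|^2dk)^2dy\bigr)^{1/2}\le\int_{\mathbb{R}^+}(\int_0^{k/4\pi}|\hat{\tilde f}|^4dy)^{1/2}dk$, but this is exactly the "interpolation/trace inequality" you anticipate.
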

	\begin{proof}
		Recall that for $k=z-1/z$, the map $k\to z$ is given by
		\begin{align*}
			&z_+(k)=(k+\sqrt{k^2+4})/2:\ \mathbb{R}\to\mathbb{R}^+,\\ &z_-(k)=(k-\sqrt{k^2+4})/2:\ \mathbb{R}\to\mathbb{R}^-.
		\end{align*}
		Therefore, it is readily seen that
		\begin{align*}
			\int_{\mathbb{R}}f(z)e^{-\frac{i}{2}(z-1/z)y}dz&=\int_{\mathbb{R}}\tilde{f}(k)e^{-\frac{i}{2}ky}\frac{1}{1+z_+(k)^{-2}}dk+\int_{\mathbb{R}}\tilde{f}(k)e^{-\frac{i}{2}ky}\frac{1}{1+z_-(k)^{-2}}dk\\
			&=\int_{\mathbb{R}}\tilde{f}(k)e^{-\frac{i}{2}ky}dk.
		\end{align*}
		And the assertion in (b) is obtained from the fact that
		\begin{align*}
			\frac{1}{s-z}+\dfrac{1}{s^2}\cdot\frac{1}{-\frac{1}{s}-z}=\frac{1+\frac{1}{s^2}}{s-\frac{1}{s}-z+\frac{1}{z}}.
		\end{align*}
		For the claim in (c), we now give the details of the calculation for $\|\mathcal{C}_+(fe^{2\theta})\|_2^2\|\mathcal{C}_+(ge^{2\theta})\|_2^2$. It is found  that
		\begin{align*}
			\int_{\mathbb{R}^+}& \|\mathcal{C}_+(fe^{2\theta})\|_2^2\|\mathcal{C}_+(ge^{2\theta})\|_2^2dy=\int_{\mathbb{R}^+}  \|\mathcal{C}^k_+(fe^{2\theta})\|_{L^2(\mathbb{R}_k)}^2\|\mathcal{C}^k_+(ge^{2\theta})\|_{L^2(\mathbb{R}_k)}^2dy\\
			&=\int_{\mathbb{R}^+}\int_{\mathbb{R}}X_+(k)|\hat{\tilde{f}}(k+y/4\pi)|^2dk\int_{\mathbb{R}}X_+(k)|\hat{\tilde{g}}(k+y/4\pi)|^2dkdy\\
			&=\int_{\mathbb{R}^+}\int_{y/4\pi}^\infty|\hat{\tilde{f}}(k)|^2dk\int_{y/4\pi}^\infty|\hat{\tilde{g}}(k)|^2dkdy\\
			&\leq \left( \int_{\mathbb{R}^+}(\int_{y/4\pi}^\infty|\hat{\tilde{f}}(k)|^2dk)^2dy\right) ^{1/2}\left( \int_{\mathbb{R}^+}(\int_{y/4\pi}^\infty|\hat{\tilde{g}}(k)|^2dk)^2dy\right) ^{1/2}\\
			&\leq \int_{\mathbb{R}^+} ( \int_0^{k/4\pi}|\hat{\tilde{f}}(k)|^4dy) ^{1/2}dk\int_{\mathbb{R}^+} ( \int_0^{k/4\pi}|\hat{\tilde{g}}(k)|^4dy) ^{1/2}dk\\
			&=\int_{\mathbb{R}^+} |\hat{\tilde{f}}(k)|^2(k/4\pi) ^{1/2}dk\int_{\mathbb{R}^+} |\hat{\tilde{g}}(k)|^2(k/4\pi) ^{1/2}dk\leq C\|\hat{\tilde{f}}\|_{{2,1/4}(\mathbb{R}_k)}^2\|\hat{\tilde{g}}\|_{{2,1/4}(\mathbb{R}_k)}^2,
		\end{align*}
		and
		\begin{align*}
			\int_{\mathbb{R}^+}y^2& \|\mathcal{C}_+(fe^{2\theta})\|_2^2\|\mathcal{C}_\pm(ge^{\pm2\theta})\|_2^2dy=\int_{\mathbb{R}^+}\int_{y/4\pi}^\infty y|\hat{\tilde{f}}(k)|^2dk\int_{y/4\pi}^\infty y|\hat{\tilde{g}}(k)|^2dkdy\\
			&\leq \left( \int_{\mathbb{R}^+}(\int_{y/4\pi}^\infty y|\hat{\tilde{f}}(k)|^2dk)^2dy\right) ^{1/2}\left( \int_{\mathbb{R}^+}(\int_{y/4\pi}^\infty y|\hat{\tilde{g}}(k)|^2dk)^2dy\right) ^{1/2}\\
			&\leq \int_{\mathbb{R}^+} ( \int_0^{k/4\pi}y^2|\hat{\tilde{f}}(k)|^4dy) ^{1/2}dk\int_{\mathbb{R}^+} ( \int_0^{k/4\pi}y^2|\hat{\tilde{g}}(k)|^4dy) ^{1/2}dk\\
			&=\frac{1}{3}\int_{\mathbb{R}^+} |\hat{\tilde{f}}(k)|^2(k/4\pi) ^{3/2}dk\int_{\mathbb{R}^+} |\hat{\tilde{g}}(k)|^2(k/4\pi) ^{3/2}dk\leq C\|\hat{\tilde{f}}\|_{{2,3/4}(\mathbb{R}_k)}^2\|\hat{\tilde{g}}\|_{{2,3/4}(\mathbb{R}_k)}^2.
		\end{align*}
	This completes the proof of Lemma \ref {lemmaFourier}.
	\end{proof}
	We shall now recover $m$  from the RH problem $M$ and give
	$L^2$-estimate of $m$ and establish our main result in Theorem \ref{last}.
	\begin {proof} [Proof of  Theorem \ref{last}]
	We only give the details of the proof of $m(t,\cdot)\in H^{2,1}(\mathbb{R}^+_y)$.
	Together with  Lemma \ref{lemmaytox} and the fact that $|\partial_xy|$,  $|\partial_x^2y|$ is bounded as shown in Lemma \ref{the1}, it follows that $m(t,\cdot)\in H^{2,1}(\mathbb{R}^+_x)$.
	Recall the reconstruction formula in Lemma \ref{lemm3.2}:
	\begin{align}
		\eta=\lim_{z\to \infty}zM_{12}.
	\end{align}
	From \eqref{intM}, it is adduced that
	\begin{align}
		\eta=\frac{i}{2\pi}\int_{\mathbb{R}}(\mu_{11}-1)\bar{r}e^{2\theta}dz+\frac{i}{2\pi}\int_{\mathbb{R}}\bar{r}e^{2\theta}dz.\label{eta}
	\end{align}
	For convenience, denote
	\begin{align}
		\eta_1=\int_{\mathbb{R}}(\mu_{11}-1)\bar{r}e^{2\theta}dz,\ \eta_2=\int_{\mathbb{R}}\bar{r}e^{2\theta}dz.
	\end{align}
	Consequently, Lemma \ref{lemmaFourier} gives that $\eta_2\in H^{1,1}(\mathbb{R}_y)$. And for $\eta_1$, invoking \eqref{mu}, it holds that
	\begin{align}
		\eta_1=\int_{\mathbb{R}}\mathcal{C}_-(re^{-2\theta}\mu_{12})\bar{r}e^{2\theta}dz=-\int_{\mathbb{R}}\mathcal{C}_+(\bar{r}e^{2\theta})re^{-2\theta}\mu_{12}dz,\label{eta1}
	\end{align}
	from which and the estimate in \eqref{muL2} we obtain that
	\begin{align*}
		|\eta_1|\leq \|\mathcal{C}_+(\bar{r}e^{2\theta})\|_2\|\mu_{12}\|_2\|r\|_\infty\leq2\|\mathcal{C}_+(\bar{r}e^{2\theta})\|_2\|r\|_\infty(\|\mathcal{C}_+(\bar{r}e^{2\theta})\|_2+\|\mathcal{C}_-(re^{-2\theta})\|_2).
	\end{align*}
	For the term $\|\mathcal{C}_+(\bar{r}e^{2\theta})\|_2^2$ in the right of above inequality, applying Lemma \ref{lemmaFourier} yields
	\begin{align*}
		\int_{\mathbb{R}^+} \|\mathcal{C}_+(\bar{r}e^{2\theta})\|_2^4dy\leq C\|\hat{r}\|_{{2,1/4}(\mathbb{R}_k)}^4,
	\end{align*}
	and
	\begin{align*}
		\int_{\mathbb{R}^+} \big|y\big|^2\|\mathcal{C}_+(\bar{r}e^{2\theta})\|_2^4dy& \leq C\|\hat{r}\|_{{2,3/4}(\mathbb{R}_k)}^4.
	\end{align*}
	Similar calculation of $ \|\mathcal{C}_+(\bar{r}e^{2\theta})\|_2\|\mathcal{C}_-(re^{-2\theta})\|_2$ gives that  $\eta_1\in L^{2,1}(\mathbb{R}^+_y)$. Take the $y$-derivative in \eqref{eta1} and obtain
	\begin{align*}
		\partial_y\eta_1=&-\int_{\mathbb{R}}\mathcal{C}_+(\frac{i}{2}k(\cdot)\bar{r}e^{2\theta})re^{-2\theta}\mu_{12}dz-\int_{\mathbb{R}}\mathcal{C}_+(\bar{r}e^{2\theta})\frac{i}{2}k(z)re^{-2\theta}\mu_{12}dz\\
		&-\int_{\mathbb{R}}\mathcal{C}_+(\bar{r}e^{2\theta})re^{-2\theta}\partial_y\mu_{12}dz,
	\end{align*}
	which implies that
	\begin{align*}
		|\partial_y\eta_1|\leq&\|r\|_\infty\left(  \|\mathcal{C}_+(k(\cdot)\bar{r}e^{2\theta})\|_2\|\mu_{12}\|_2+\|\mathcal{C}_+(\bar{r}e^{2\theta})\|_2\|\mu_{12}\|_2+\|\mathcal{C}_+(\bar{r}e^{2\theta})\|_2\|\partial_y\mu_{12}\|_2\right) \\
		\leq& 2\|r\|_\infty(\|\mathcal{C}_+(\bar{r}e^{2\theta})\|_2+\|\mathcal{C}_-(re^{-2\theta})\|_2)\left( \|\mathcal{C}_+(k(\cdot)\bar{r}e^{2\theta})\|_2+\|\mathcal{C}_+(\bar{r}e^{2\theta})\|_2\right) \\
		&+2\|r\|_\infty\|\mathcal{C}_+(\bar{r}e^{2\theta})\|_2(\|\mathcal{C}_+(k(\cdot)\bar{r}e^{2\theta})\|_2+\|\mathcal{C}_-(k(\cdot)re^{-2\theta})\|_2)\nonumber\\
		&+8\|r\|_\infty\|\mathcal{C}_+(\bar{r}e^{2\theta}))\|_2(\|\mathcal{C}_+(\bar{r}e^{2\theta})\|_2+\|\mathcal{C}_-(re^{-2\theta})\|_2)\|k(\cdot)r\|_{{\infty}}
	\end{align*}
	The last inequality is obtained from \eqref{muyL2}.
	We give the details of the estimate of the term $\|\mathcal{C}_+(\bar{r}e^{2\theta})\|_2\|\mathcal{C}_+(k(\cdot)\bar{r}e^{2\theta})\|_2$. Similar claims can be applied to the others.
	
	Via  Lemma \ref{lemmaFourier}, it holds that
	\begin{align*}
		&\int_{\mathbb{R}^+} \|\mathcal{C}_+(\bar{r}e^{2\theta})\|_2^2\|\mathcal{C}_+(k(\cdot)\bar{r}e^{2\theta})\|_2^2dy\leq C\|(\cdot)r\|_{H^{1/4}(\mathbb{R}_k)}^2\|r\|_{H^{1/4}(\mathbb{R}_k)}^2,
	\end{align*}
	and
	\begin{align*}
		&\int_{\mathbb{R}^+} y^2\|\mathcal{C}_+(\bar{r}e^{2\theta})\|_2^2\|\mathcal{C}_+(k(\cdot)\bar{r}e^{2\theta})\|_2^2dy\leq C \|r\|_{H^{3/4}(\mathbb{R}_k)}^2\|(\cdot)r\|_{H^{3/4}(\mathbb{R}_k)}^2.
	\end{align*}
	This in turn implies that  $	|\partial_y\eta_1|\in L^{2,1}(\mathbb{R}_y)$.
	Therefore, we finally conclude that $\eta\in H^{1,1}(\mathbb{R}^+_y)$. By \eqref{eta}, the map:
	$$r\in H^{1,1}(\mathbb{R}_k)\longrightarrow\eta\in H^{1,1}(\mathbb{R}^+_y)$$
	is locally Lipschitz continuous. Indeed, if there exists
	 another $r_1\in H^{1,1}(\mathbb{R}_k)$ with
	a solution $\mu(r_1)$ of corresponding
	equation \eqref{mu} and $\eta(r_1)$ from  \eqref{eta}, it is inferred that
	\begin{align*}
		\eta(r_1)-\eta(r)=&\frac{i}{2\pi}\int_{\mathbb{R}}(\mu_{11}(r_1)-\mu_{11}(r))\bar{r}_1e^{2\theta}dz+\frac{i}{2\pi}\int_{\mathbb{R}}(\mu_{11}(r)-1)(\bar{r}_1-\bar{r})e^{2\theta}dz\\
		&+\frac{i}{2\pi}\int_{\mathbb{R}}(\bar{r}_1-\bar{r})e^{2\theta}dz.
	\end{align*}
	Then by a similar way in the above expressions, it follows that $$\|\eta(r_1)-\eta(r)\|_{ H^{1,1}(\mathbb{R}^+_y)}\leq C (\max \{\|r\|_{H^{1,1}(\mathbb{R}_k)}, \|r_1\|_{H^{1,1}(\mathbb{R}_k)}\})(\|r\|_{H^{1,1}(\mathbb{R}_k)}-\|r_1\|_{H^{1,1}(\mathbb{R}_k)}).$$
	On the other hand, consider
	\begin{align}
		\zeta^{(+)}=-\lim_{z\to \infty}z(M_{22}-1).
	\end{align}
	Combining \eqref{intM} and \eqref{mu}, it is adduced that
	\begin{align}
		\zeta^{(+)}=\frac{1}{2\pi i}\int_{\mathbb{R}}\mathcal{C}_-(re^{-2\theta})\bar{r}e^{2\theta}ds-\frac{1}{2\pi i}\int_{\mathbb{R}}(\mu_{22}-1)re^{-2\theta}\mathcal{C}_+(\bar{r}e^{2\theta})ds.
	\end{align}
	An analogical calculation gives that $\zeta^{(+)}\in H^{1,1}(\mathbb{R}^+_y)$ and the map:
	$$r\in H^{1,1}(\mathbb{R}_k)\longrightarrow \zeta^{(+)}\in H^{1,1}(\mathbb{R}^+_y)$$
	is locally Lipschitz continuous.  Recall that
	\begin{align*}
		\eta=\dfrac{m_{x}}{(1+m^2)^{3/2}},\ \zeta^{(+)}=\frac{i}{2}\int_{y}^{+\infty}\left( \frac{m_{x}^2}{q^6}+\frac{m^2}{q^2}\right) ds.
	\end{align*}
	Then by the boundedness of $m$ and Sobolev-Gagliardo-Nirenberg inequality, we finally obtain $m(t,\cdot)\in H^{2,1}(\mathbb{R}^+_y)$ and the Lipschitz continuity from $r\in H^{1,1}(\mathbb{R}_k)$ to $ m(t,\cdot)\in H^{1,1}(\mathbb{R}^+_y)$.

	For $y\in \mathbb{R}^-$, the estimates are same.
	This gives the desired result in   Theorem \ref{last}.
	\end {proof}

	\noindent\textbf{Acknowledgements}
	
	The work of Fan and Yang is partially  supported by  the National Science
	Foundation of China under grants 12271104, 51879045 and 12247182 and China Postdoctoral Science Foundation. The work of Liu is partially supported by the Simons Foundation under grant 499875.

	\hspace*{\parindent}
	\\
	
\end{document}